\newtheorem{theorem}{Theorem}
\newtheorem{lemma}{Lemma}
\newtheorem{false statement}{False statement}
\theoremstyle{definition}
\newtheorem{claim}{Claim}
\newtheorem{subclaim}{Claim}[claim]
\newtheorem{remark}[claim]{Remark}
\newtheorem{corollary}{Corollary}
\newtheorem{problem}{Problem}
\newtheorem{subproblem}{Problem}[problem]
\newtheorem{case}{Case}
\newtheorem{subcase}{Case}[case]
\newcounter{mathitem}
\newenvironment{mathitem}
  {\begin{list}{{$(\roman{mathitem})$}}{
   \setcounter{mathitem}{0}
   \usecounter{mathitem}
   \setlength{\topsep}{0pt plus 2pt minus 0pt}
   \setlength{\parskip}{0pt plus 2pt minus 0pt}
   \setlength{\partopsep}{0pt plus 2pt minus 0pt}
   \setlength{\parsep}{0pt plus 2pt minus 0pt}
   \setlength{\leftmargin}{35pt}
   \setlength{\itemsep}{0pt plus 2pt minus 0pt}}}
  {\end{list}}
\begin{document}

\title{\bf\Large Ore- and Fan-type heavy subgraphs for Hamiltonicity of 2-connected graphs\thanks{Supported by NSFC
(No.~11271300) and the Doctorate Foundation of Northwestern Polytechnical University (cx201326).}}

\date{}

\author{Bo Ning and Shenggui Zhang\thanks{Corresponding author. E-mail address: sgzhang@nwpu.edu.cn (S. Zhang).}    \\[2mm]
\small Department of Applied Mathematics,
\small Northwestern Polytechnical University,\\
\small Xi'an, Shaanxi 710072, P.R.~China}

\maketitle

\begin{abstract}
Bedrossian characterized all pairs of forbidden subgraphs for a 2-connected graph to be Hamiltonian. Instead of forbidding some induced subgraphs, we relax the conditions for graphs to be Hamiltonian by restricting Ore- and Fan-type degree conditions on these induced subgraphs. Let $G$ be a graph on $n$ vertices and $H$ be an induced subgraph of $G$. $H$ is called \emph{o}-heavy if there are two nonadjacent vertices in $H$ with degree sum at least $n$, and is called $f$-heavy if for every two vertices $u,v\in V(H)$, $d_{H}(u,v)=2$ implies that $\max\{d(u),d(v)\}\geq n/2$. We say that $G$ is $H$-\emph{o}-heavy ($H$-\emph{f}-heavy) if every induced subgraph of $G$ isomorphic to $H$ is \emph{o}-heavy (\emph{f}-heavy). In this paper we characterize all connected graphs $R$ and $S$ other than $P_3$ such that every 2-connected  $R$-\emph{f}-heavy and $S$-\emph{f}-heavy ($R$-\emph{o}-heavy and $S$-\emph{f}-heavy, $R$-\emph{f}-heavy and $S$-free) graph is Hamiltonian. Our results extend several previous theorems on forbidden subgraph conditions and heavy subgraph conditions for Hamiltonicity of 2-connected graphs.
\medskip

\noindent {\bf Keywords:} Induced subgraphs; \emph{o}-Heavy subgraphs; \emph{f}-Heavy subgraphs; Hamiltonicity
\smallskip

\noindent {\bf AMS Subject Classification (2000):} 05C38 05C45
\end{abstract}

\section{Introduction}

We use Bondy and Murty \cite{Bondy_Murty} for terminology and notation not defined here and consider finite simple graphs only.

Let $G$ be a graph. For a vertex $v$ and a subgraph $H$ of $G$, we use $N_H(v)$ to denote the set, and $d_H(v)$ the number, of neighbors of $v$ in $H$, respectively. We call $d_H(v)$ the \emph{degree} of $v$ in $H$. For $x,y\in V(G)$, an $(x,y)$-\emph{path} is a path connecting $x$ and $y$; the vertex $x$ will be called the {\em origin} and $y$ the {\em terminus} of the path. If $x,y\in V(H)$, the \emph{distance} between $x$ and $y$ in $H$, denoted $d_H(x,y)$, is the length of a shortest $(x,y)$-path in $H$.  If there is no danger of ambiguity, $N_G(v)$, $d_G(v)$ and $d_G(x,y)$ are abbreviated to $N(v)$, $d(v)$ and $d(x,y)$, respectively.

If a subgraph $G'$ of a graph $G$ contains all edges $xy\in E(G)$ with $x,y\in V(G')$, then $G'$ is called an \emph{induced subgraph} of $G$. For a given graph $H$, we say that $G$ is \emph{$H$-free} if $G$ does not contain an induced subgraph isomorphic to $H$. For a family $\mathcal{H}$ of graphs, $G$ is called \emph{$\mathcal{H}$-free} if $G$ is $H$-free for every $H\in\mathcal{H}$.

The bipartite graph $K_{1,3}$ is called the \emph{claw}, its (only) vertex of degree 3 is called its \emph{center} and the other vertices are its \emph{end vertices}.
In this paper, instead of $K_{1,3}$-free, we use the terminology claw-free.

Many graph theorists drew their attention to find forbidden subgraph conditions for a graph to be Hamiltonian. If a graph is 2-connected and $P_3$-free, then it is a complete graph, and hence it is Hamiltonian. In fact, Faudree and Gould \cite{Faudree_Gould} showed that $P_3$ is the only connected graph $S$ such that every 2-connected $S$-free graph is Hamiltonian. The case with pairs of forbidden subgraphs other than $P_3$ is much more interesting. Bedrossian \cite{Bedrossian} gave a complete characterization of all pairs of forbidden subgraphs that imply a 2-connected graph is Hamiltonian.

\begin{theorem}[Bedrossian \cite{Bedrossian}]\label{th1}
Let $R$ and $S$ be connected graphs other than $P_3$ and let $G$ be a 2-connected graph. Then $G$ being $\{R,S\}$-free implies $G$ is Hamiltonian if and only if (up to symmetry) $R=K_{1,3}$ and $S=P_4,P_5,P_6,C_3,Z_1,Z_2,B,N$ or $W$ (see Fig. 1).
\end{theorem}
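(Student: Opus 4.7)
My plan would be to split the proof into the necessity and sufficiency directions.

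For necessity, I would argue by contraposition: for every pair $\{R,S\}$ outside the stated list, exhibit a 2-connected non-Hamiltonian $\{R,S\}$-free graph. The starting observation is that for each $k\geq 2$ the complete bipartite graph $K_{k,k+1}$ is 2-connected and non-Hamiltonian; since it is triangle-free and its induced subgraphs are themselves complete bipartite, this forces one of $R,S$, say $R$, to be an induced subgraph of $K_{k,k+1}$ for \emph{every} sufficiently large $k$. The only connected graphs with this property are the stars $K_{1,t}$. Using further 2-connected non-Hamiltonian graphs — for instance line graphs of small 2-edge-connected cubic non-Hamiltonian graphs, which are 2-connected, non-Hamiltonian and $K_{1,t}$-free for $t\geq 4$ — one concludes $R=K_{1,3}$. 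With $R$ fixed, additional 2-connected non-Hamiltonian claw-free constructions (chains of complete graphs glued at vertices, with attached pendants chosen to rule out specific candidates for $S$) would be used to push $S$ into the stated list.

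For sufficiency, I would treat the nine sub-cases separately, in roughly increasing order of difficulty. The case $S=P_4$ is immediate, since a 2-connected $\{K_{1,3},P_4\}$-free graph is complete. The case $S=C_3$ follows from the Oberly--Sumner theorem, after verifying that 2-connected $\{K_{1,3},C_3\}$-free graphs are locally connected. The cases $S\in\{P_5,P_6,Z_1,Z_2\}$ can be handled by a longest-cycle argument combined, where convenient, with the Bondy--Chv\'atal or Ryj\'a\v{c}ek closure: let $C$ be a longest cycle and $x\notin V(C)$; 2-connectivity provides two internally disjoint paths from $x$ to $C$, and claw-freeness forces the neighbors of $x$ on $C$ to sit in a tightly constrained configuration, so the forbidden subgraph $S$ either yields an induced $S$ or permits a local exchange producing a longer cycle, a contradiction. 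The three remaining cases $S\in\{B,N,W\}$ use the same longest-cycle framework but with a more elaborate case analysis on the attachment pattern.

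The principal obstacle, in my view, lies in the longest-cycle arguments for $S\in\{N,W\}$: the combined interaction of the forbidden graph $S$, claw-freeness, and the structure of the longest cycle requires careful bookkeeping on how vertices off $C$ attach to $C$ and on the induced subgraphs spanned by short subpaths of $C$ together with off-cycle vertices; it is easy to overlook a configuration. In the necessity direction, the main technical burden is producing a sufficiently small yet complete catalogue of 2-connected non-Hamiltonian graphs, each ruling out precisely the right candidates for $S$, so that the argument converges cleanly on the exact list $P_4, P_5, P_6, C_3, Z_1, Z_2, B, N, W$ without under- or over-constraining $S$.
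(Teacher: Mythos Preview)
The paper does not prove Theorem~\ref{th1}. This theorem is quoted as a known result from Bedrossian's 1991 thesis \cite{Bedrossian} (with the extension to $n\geq 10$ later in Faudree and Gould \cite{Faudree_Gould}); it serves only as background and motivation for the paper's own results (Theorems~\ref{th12}--\ref{th14}). Consequently there is no ``paper's own proof'' of this statement to compare your proposal against.

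As a standalone sketch, your outline follows the standard shape of the argument in \cite{Bedrossian,Faudree_Gould}: necessity via a catalogue of 2-connected non-Hamiltonian test graphs (starting with $K_{k,k+1}$ to force a star, then claw-free examples to pin down $R=K_{1,3}$, then further claw-free constructions to constrain $S$), and sufficiency via a longest-cycle contradiction for each $S$ in the list. That said, what you have written is a plan rather than a proof. On the necessity side you have not actually exhibited the claw-free families needed to eliminate, say, $P_7$, $Z_3$, or the various enlargements of $B$, $N$, $W$; producing these correctly is the real work. On the sufficiency side, several of your attributions are off (e.g.\ the $C_3$ case is not handled by local connectivity --- a 2-connected $\{K_{1,3},C_3\}$-free graph is simply an even cycle or $K_2$, and more to the point the relevant sufficiency result predates Oberly--Sumner and is elementary), and the phrase ``more elaborate case analysis'' for $S\in\{B,N,W\}$ hides exactly the part that requires a genuine argument. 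If you intend to write a self-contained proof, you will need to supply the explicit constructions for necessity and the full longest-cycle case analyses for sufficiency; the sketch as it stands would not be accepted as a proof.
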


\begin{center}
\begin{picture}(360,200)

\thicklines

\put(5,140){\multiput(20,30)(50,0){5}{\put(0,0){\circle*{6}}}
\put(20,30){\line(1,0){100}} \put(170,30){\line(1,0){50}}
\qbezier[4](120,30)(145,30)(170,30) \put(20,35){$v_1$}
\put(70,35){$v_2$} \put(120,35){$v_3$} \put(170,35){$v_{i-1}$}
\put(220,35){$v_i$} \put(115,10){$P_i$ (Path)}}

\put(265,130){\put(20,30){\circle*{6}} \put(70,30){\circle*{6}}
\put(45,55){\circle*{6}} \put(20,30){\line(1,0){50}}
\put(20,30){\line(1,1){25}} \put(70,30){\line(-1,1){25}}
\put(20,10){$C_3$ (Cycle)}}

\put(0,0){\put(20,30){\circle*{6}} \put(70,30){\circle*{6}}
\multiput(45,55)(0,25){4}{\put(0,0){\circle*{6}}}
\put(20,30){\line(1,0){50}} \put(20,30){\line(1,1){25}}
\put(70,30){\line(-1,1){25}} \put(45,55){\line(0,1){25}}
\put(45,105){\line(0,1){25}} \qbezier[4](45,80)(45,92.5)(45,105)
\put(50,80){$v_1$} \put(50,105){$v_{i-1}$} \put(50,130){$v_i$}
\put(40,10){$Z_i$}}

\put(90,0){\put(45,40){\circle*{6}} \put(45,40){\line(-1,1){25}}
\put(45,40){\line(1,1){25}} \put(20,65){\line(1,0){50}}
\multiput(20,65)(50,0){2}{\multiput(0,0)(0,30){2}{\put(0,0){\circle*{6}}}
\put(0,0){\line(0,1){30}}} \put(25,10){$B$ (Bull)}}

\put(180,0){\multiput(20,30)(50,0){2}{\multiput(0,0)(0,30){2}{\put(0,0){\circle*{6}}}
\put(0,0){\line(0,1){30}}}
\multiput(45,85)(0,30){2}{\put(0,0){\circle*{6}}}
\put(45,85){\line(0,1){30}} \put(20,60){\line(1,0){50}}
\put(20,60){\line(1,1){25}} \put(70,60){\line(-1,1){25}}
\put(5,50){$a_1$} \put(75,50){$a_3$} \put(30,85){$a_2$} \put(30,115){$b_2$}
\put(5,25){$b_1$} \put(76,25){$b_3$}
\put(25,10){$N$ (Net)}}

\put(270,0){\put(45,30){\circle*{6}} \put(20,55){\line(1,0){50}}
\put(45,30){\line(1,1){25}} \put(45,30){\line(-1,1){25}}
\multiput(20,55)(0,30){2}{\put(0,0){\circle*{6}}}
\multiput(70,55)(0,30){3}{\put(0,0){\circle*{6}}}
\put(20,55){\line(0,1){30}} \put(70,55){\line(0,1){60}}
\put(50,25){$a_3$} \put(15,40){$a_2$} \put(70,40){$a_1$}
\put(5,85){$b_2$} \put(55,85){$b_1$} \put(55,115){$c_1$}
\put(10,10){$W$ (Wounded)}}

\end{picture}

\small Fig. 1. Graphs $P_i,C_3,Z_i,B,N$ and $W$.
\end{center}

On the other hand, degree conditions have long been useful tools in the study of Hamilton cycles. Among all, Ore's condition \cite{Ore} is fundamental.

\begin{theorem}[Ore \cite{Ore}]\label{th2}
Let $G$ be a graph on $n\geq 3$ vertices. If the degree sum of every pair of nonadjacent vertices in $G$ is at least $n$, then $G$ is Hamiltonian.
\end{theorem}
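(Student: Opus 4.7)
The plan is to prove Theorem~\ref{th2} by the classical contradiction-plus-edge-saturation argument. Suppose for contradiction that $G$ satisfies Ore's degree condition but fails to be Hamiltonian. I would repeatedly add edges between nonadjacent pairs as long as the graph stays non-Hamiltonian; since $K_n$ is Hamiltonian for $n\geq 3$, this process terminates at an edge-maximal non-Hamiltonian spanning supergraph $G^*$ of $G$. Because adding edges can only raise degrees, the Ore condition is preserved: $d_{G^*}(u)+d_{G^*}(v)\geq n$ for every nonadjacent pair $u,v$ in $G^*$.

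Since $G^*\neq K_n$, I can pick any nonadjacent pair $x,y\in V(G^*)$. By maximality, $G^*+xy$ is Hamiltonian, and every Hamilton cycle of $G^*+xy$ must traverse the new edge $xy$. Deleting this edge from such a cycle produces a Hamilton $(x,y)$-path $P=v_1v_2\cdots v_n$ in $G^*$, with $v_1=x$ and $v_n=y$.

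The core step is the standard rotation and pigeonhole argument. Define
\[
A=\{\,i:1\leq i\leq n-1,\ v_1v_{i+1}\in E(G^*)\,\},\qquad B=\{\,i:1\leq i\leq n-1,\ v_iv_n\in E(G^*)\,\}.
\]
In $A$ the vertex $v_{i+1}$ ranges over $V(G^*)\setminus\{v_1\}$, and in $B$ the vertex $v_i$ ranges over $V(G^*)\setminus\{v_n\}$, so $|A|=d_{G^*}(v_1)$ and $|B|=d_{G^*}(v_n)$. Moreover $v_1v_n\notin E(G^*)$ rules out $n-1\in A$ and $1\in B$, whence $A\cup B\subseteq\{1,\ldots,n-1\}$. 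Ore's inequality then gives
\[
|A|+|B|=d_{G^*}(v_1)+d_{G^*}(v_n)\geq n > |A\cup B|,
\]
so some index $i$ lies in $A\cap B$. For such an $i$ the edges $v_1v_{i+1}$ and $v_iv_n$, combined with the two segments of $P$, produce the Hamilton cycle
\[
v_1v_{i+1}v_{i+2}\cdots v_nv_iv_{i-1}\cdots v_2v_1
\]
of $G^*$, contradicting the choice of $G^*$.

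There is no substantive obstacle: the only point that deserves a moment of care is the index bookkeeping, namely observing that the single assumption $v_1v_n\notin E(G^*)$ simultaneously secures $|A|=d_{G^*}(v_1)$, $|B|=d_{G^*}(v_n)$, and $A\cup B\subseteq\{1,\ldots,n-1\}$, which is exactly what makes the pigeonhole step tight.
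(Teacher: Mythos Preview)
Your argument is correct: this is the classical Bondy--Chv\'atal style proof of Ore's theorem, and all steps are sound. One small expository quibble: the equalities $|A|=d_{G^*}(v_1)$ and $|B|=d_{G^*}(v_n)$ hold simply because every vertex lies on $P$, not because $v_1v_n\notin E(G^*)$; the nonadjacency of $v_1$ and $v_n$ is used only to invoke the Ore inequality $d_{G^*}(v_1)+d_{G^*}(v_n)\ge n$. This does not affect the validity of the proof.

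As for comparison with the paper: Theorem~\ref{th2} is stated there as a background result with a citation to Ore~\cite{Ore} and is not proved in the paper itself. So there is no ``paper's own proof'' to compare against; your write-up supplies exactly the standard proof one would expect.
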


Let $G$ be a graph on $n$ vertices. For a given graph $H$, we say that $G$ is \emph{$H$-o-heavy} if for every induced subgraph $G'$ of $G$ isomorphic to $H$, there exist two nonadjacent vertices $x,y\in V(G')$ such that $d(x)+d(y)\geq n$. For a family $\mathcal{H}$ of graphs, $G$ is called \emph{$\mathcal{H}$-o-heavy} if $G$ is $H$-\emph{o}-heavy for every $H\in \mathcal{H}$. Clearly, an $H$-free graph is also $H$-\emph{o}-heavy, and if $H'$ is an induced subgraph of $H$, then every $H'$-\emph{o}-heavy graph is also $H$-\emph{o}-heavy. In this paper, we use the terminology claw-\emph{o}-heavy instead of $K_{1,3}$-\emph{o}-heavy.

By relaxing forbidden subgraph conditions to conditions in which the subgraphs are allowed, but where Ore's condition is imposed on these subgraphs if they appear, Li et al. \cite{Li_Ryjacek_Wang_Zhang} extended Theorem \ref{th1} as follows.

\begin{theorem}[Li, Ryj\'a\v{c}ek, Wang and Zhang \cite{Li_Ryjacek_Wang_Zhang}]\label{th3}
Let $R$ and $S$ be connected graphs other than $P_3$ and let $G$ be a 2-connected graph. Then $G$ being $\{R,S\}$-$o$-heavy implies $G$ is Hamiltonian if and only if (up to symmetry) $R=K_{1,3}$ and $S=P_4,P_5,C_3,Z_1,Z_2,B,N$ or $W$.
\end{theorem}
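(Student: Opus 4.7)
The plan is to prove the necessity and sufficiency separately, mirroring the structure of Bedrossian's proof of Theorem~\ref{th1} but replacing the forbidden-subgraph bookkeeping by the $o$-heavy bookkeeping where appropriate.

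\textbf{Necessity.} The key observation is that every $\{R,S\}$-free graph is trivially $\{R,S\}$-$o$-heavy, so any 2-connected non-Hamiltonian $\{R,S\}$-free graph serves as a witness that the pair $(R,S)$ does not force Hamiltonicity under the weaker $o$-heavy hypothesis either. Combined with the Faudree--Gould theorem (that $P_3$ is the unique connected $S$ for which $S$-freeness alone forces Hamiltonicity of 2-connected graphs) and Theorem~\ref{th1}, this immediately narrows the admissible pairs down to $R=K_{1,3}$ with $S\in\{P_4,P_5,P_6,C_3,Z_1,Z_2,B,N,W\}$. It then remains to rule out $S=P_6$, which I would do by producing an explicit 2-connected graph on $n$ vertices that is $\{K_{1,3},P_6\}$-$o$-heavy but not Hamiltonian; a natural candidate is a graph obtained from three cliques glued along a common pair of cut vertices, sized so that the $o$-heavy property holds at every induced $P_6$ while the global degree sums stay below what is needed for Hamiltonicity.

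\textbf{Sufficiency.} For each of the seven admissible $S$, I would argue by contradiction. Let $G$ be a 2-connected $\{K_{1,3},S\}$-$o$-heavy graph on $n$ vertices, suppose $G$ is not Hamiltonian, and fix a longest cycle $C$ with a chosen orientation. Let $H$ be a component of $G-V(C)$ and, by 2-connectivity, choose two distinct attachment vertices $v_1,v_2\in V(C)$ together with corresponding in-$H$ neighbours. The standard longest-cycle lemmas give that $v_1^+$ and $v_2^+$ are nonadjacent, have no common neighbour in $H$, and jointly with suitable vertices of $H$ and $C$ produce an induced claw, or an induced copy of $S$, inside $G$. Applying the $o$-heavy hypothesis to this induced copy yields two nonadjacent vertices $x,y$ with $d(x)+d(y)\ge n$. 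A Chv\'atal--Erd\H{o}s style counting over the arcs of $C$ then forces two consecutive vertices on $C$ to neighbour $\{x,y\}$ in the ``crossed'' pattern needed to splice a longer cycle, contradicting the maximality of $C$. For the short/simple $S$ (namely $P_4, C_3, Z_1$), this scheme is essentially immediate and in fact overlaps with known results; for $Z_2$ and the claw-plus-triangle cases $B,N,W$ one instead starts from the claw forced by non-Hamiltonicity and builds up $S$ edge by edge along $C$.

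The main obstacle is the case analysis for the larger graphs $S\in\{P_5,B,N,W\}$, where an induced copy of $S$ is not delivered by a single application of the claw-$o$-heavy property: one must iteratively exploit the claw-$o$-heavy hypothesis, together with the structural information coming from adjacencies along $C$, to locate the right induced $S$ whose $o$-heavy conclusion can actually be converted into a longer cycle. Rather than redoing the combinatorics from scratch case by case, I expect the cleanest presentation to invoke (or re-establish) a localised closure-type lemma in the spirit of Ryj\'a\v{c}ek's claw-free closure, which lets one pass from a claw-$o$-heavy graph to a nicer graph where the $S$-$o$-heavy property translates directly into usable degree information along $C$.
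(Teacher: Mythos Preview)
This theorem is not proved in the present paper; it is quoted from \cite{Li_Ryjacek_Wang_Zhang}. So there is no in-paper proof to compare against directly. However, the paper imports the central technical lemmas from that reference (Lemmas~\ref{le1}, \ref{le4}, \ref{le5}) and deploys them in the proof of Theorem~\ref{th13}, so the machinery behind the original proof is visible here.

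Your necessity argument is essentially correct and matches what is done: the implication ``free $\Rightarrow$ $o$-heavy'' reduces the candidate list to Bedrossian's, and then an explicit 2-connected claw-free $P_6$-$o$-heavy non-Hamiltonian graph (the paper confirms such an example exists in the remark following Theorem~\ref{th3}) eliminates $P_6$.

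Your sufficiency sketch, however, diverges from the actual approach and contains a genuine gap. The proof in \cite{Li_Ryjacek_Wang_Zhang} does \emph{not} proceed via Chv\'atal--Erd\H{o}s counting or a Ryj\'a\v{c}ek-type closure. Instead it introduces \emph{$o$-cycles} (Lemma~\ref{le1}: every $o$-cycle is covered by a genuine cycle), \emph{$(x,y,z)$-composed} subgraphs with canonical orderings (Lemma~\ref{le4}), and pairs of vertices that are \emph{$x$-good on $C$} (Lemma~\ref{le5}). The argument grows composed subgraphs along both arcs of the longest cycle near the two attachment points, and uses the $S$-$o$-heavy hypothesis to force a heavy pair certifying $x$-goodness; Lemma~\ref{le5} then produces a longer cycle. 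Your closure shortcut fails at the outset: $G$ is only claw-$o$-heavy, not claw-free, so Ryj\'a\v{c}ek's closure is unavailable, and no off-the-shelf closure converts claw-$o$-heaviness into local completeness. The counting idea also breaks down because the heavy pair delivered by the $o$-heavy hypothesis may consist of two vertices both on $C$; splicing such a pair into a longer cycle is precisely what requires the $(x_1x_2,y_1y_2)$-pair and composed-graph machinery rather than a direct neighbourhood count.
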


One may notice that there is only one graph $P_6$ that appears in Bedrossian's result but misses here. Li et al. \cite{Li_Ryjacek_Wang_Zhang} also constructed a 2-connected claw-free $P_6$-\emph{o}-heavy graph which is not Hamiltonian. With a little effort, they got

\begin{theorem}[Li, Ryj\'a\v{c}ek, Wang and Zhang \cite{Li_Ryjacek_Wang_Zhang}]\label{th4}
Let $S$ be a connected graph other than $P_3$ and let $G$ be a 2-connected claw-$o$-heavy graph. Then $G$ being $S$-free implies $G$ is Hamiltonian if and only if $S=P_4,P_5,P_6,C_3,Z_1,Z_2,B,N$ or $W$.
\end{theorem}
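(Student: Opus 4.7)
\textbf{Proof plan for Theorem \ref{th4}.}

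The plan is to split into necessity and sufficiency, and within sufficiency to reduce nearly everything to Theorem \ref{th3}, leaving $S=P_6$ as the only genuinely new case.

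For the necessity direction, for each connected graph $S$ other than $P_3$ and not in the list I would exhibit a 2-connected claw-$o$-heavy $S$-free non-Hamiltonian graph. The standard families of non-Hamiltonian graphs used to prove the necessity half of Bedrossian's Theorem \ref{th1} are claw-free, and a claw-free graph is vacuously claw-$o$-heavy, so exactly the same graphs serve as counterexamples here. Thus the list of admissible $S$ cannot be enlarged.

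For the sufficiency direction, observe first that $G$ being $S$-free implies $G$ is $S$-$o$-heavy. Hence when $S\in\{P_4,P_5,C_3,Z_1,Z_2,B,N,W\}$, Theorem \ref{th3} applied with $R=K_{1,3}$ and this choice of $S$ immediately yields that $G$ is Hamiltonian. The only remaining and non-trivial case is $S=P_6$, which is precisely the one where the weaker $P_6$-$o$-heavy hypothesis is known to fail, via the counterexample of Li, Ryj\'a\v{c}ek, Wang and Zhang.

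For the case $S=P_6$, I would argue by contradiction from a longest cycle. Let $G$ be a 2-connected claw-$o$-heavy $P_6$-free graph of order $n$, and suppose $G$ is not Hamiltonian. Fix a longest cycle $C$ with an orientation and a vertex $x\in V(G)\setminus V(C)$ adjacent to some vertex of $C$; such $x$ exists by 2-connectedness. Let $u_1,\dots,u_k$ be the neighbors of $x$ on $C$, in the cyclic order inherited from $C$. Standard rotation and insertion arguments (e.g.\ the jump/insertion technique together with the maximality of $C$) ensure that the successors $u_i^+$ and predecessors $u_i^-$ are not in $N(x)$ and that consecutive neighbors of $x$ are separated by an arc of $C$ of length at least two. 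I would then use the $P_6$-free hypothesis to bound the length of each such arc (indeed, a long arc together with $x$ and a short path on $C$ ending at $u_i^-$ would exhibit an induced $P_6$ unless certain chords exist), and simultaneously apply claw-$o$-heaviness to the potential induced claw centered at $u_i$ with leaves $x,u_i^-,u_i^+$ to locate a pair of vertices whose degree sum is at least $n$. Combining the structural restrictions coming from $P_6$-freeness with the degree-sum information coming from claw-$o$-heaviness, I would build either a cycle strictly longer than $C$ by inserting $x$ into a suitable arc, or a Hamilton cycle directly, contradicting the maximality of $C$ in either case.

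The main obstacle is exactly the $P_6$ case: $P_6$ is long enough that local claw-freeness around the neighbors of $x$ does not by itself force cycle extension, and short-path arguments available for $P_4$-free or $P_5$-free graphs do not transfer. The crucial step will be translating the absence of an induced $P_6$ into a uniform bound on the lengths of the arcs of $C$ between successive neighbors of $x$, and then coordinating this bound with the degree-sum estimates supplied by the claw-$o$-heavy hypothesis to complete the cycle extension.
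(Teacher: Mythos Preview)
Theorem~\ref{th4} is quoted here as a result of Li, Ryj\'a\v{c}ek, Wang and Zhang and is not reproved in this paper; the paper instead proves the stronger Theorem~\ref{th13} (and hence Theorem~\ref{th14}), from which Theorem~\ref{th4} follows since $S$-free implies $S$-$f$-heavy. Your overall architecture---necessity via the claw-free Bedrossian examples, and sufficiency for $S\in\{P_4,P_5,C_3,Z_1,Z_2,B,N,W\}$ by reduction to Theorem~\ref{th3}---is correct and matches the natural route.

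The gap is in the $P_6$ case. Your plan fixes a single off-cycle vertex $x$, lists its cycle neighbours $u_1,\dots,u_k$, and hopes that $P_6$-freeness bounds the lengths of the arcs of $C$ between consecutive $u_i$. But $P_6$-freeness does not do this: an arc of $C$ may carry arbitrarily many chords, so a long arc need not contain a long induced path, and attaching $x$ at one end does not help because you have no control over edges among the arc vertices themselves. You never say which six vertices would form the induced $P_6$, and the ``translate $P_6$-freeness into an arc-length bound'' step does not go through as stated.

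The argument that actually works (Case~1 of the proof of Theorem~\ref{th13}) uses a different setup: take a longest cycle $C$ and a \emph{shortest} path $P=u_0w_1\cdots w_r v_0$ internally disjoint from $C$, with \emph{two} attachment points $u_0,v_0$ on $C$. Claims~\ref{cl1}--\ref{cl4} then certify that there are no edges between $[w_1,w_r]$ and the short initial segments $[u_1,u_{j_1}]$, $[v_1,v_{j_2}]$ along $C$, nor between these two segments themselves (Claim~\ref{cl4}(iii)). This forces $G[\{u_{j_1},u_{j_1-1},u_0,w_1,\dots,w_r,v_0,v_{j_2-1},v_{j_2}\}]$ to be an induced path on $r+6\ge 7$ vertices, which under the $P_6$-free hypothesis is an immediate contradiction. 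The key idea you are missing is to work with two attachment points and to prove the cross non-adjacencies of Claim~\ref{cl4}, rather than trying to read off an induced path from a single arc of $C$.
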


There is another degree condition due to Fan \cite{Fan} (so-called Fan's condition) with respect to Hamilton cycles.

\begin{theorem}[Fan \cite{Fan}]\label{th5}
Let $G$ be a 2-connected graph on $n$ vertices. If $\max\{d(u),d(v)\}\geq n/2$ for every pair of vertices $u,v$ with $d(u,v)=2$, then $G$ is Hamiltonian.
\end{theorem}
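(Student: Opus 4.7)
I would argue by contradiction from a longest cycle. Suppose $G$ satisfies the hypothesis but is not Hamiltonian, let $C$ be a longest cycle of $G$ fixed with a cyclic orientation, and write $x^{+}$ and $x^{-}$ for the successor and predecessor of $x\in V(C)$. Pick a component $H$ of $G-V(C)$ and let $A\subseteq V(C)$ be the set of vertices of $C$ having a neighbor in $H$; because $G$ is 2-connected, $|A|\ge 2$, and I write $A=\{a_{1},\ldots,a_{s}\}$ in cyclic order on $C$.

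The first key step is a forbidden-chord observation. For every $h\in H$ and every $a_{i}\in A\cap N(h)$, neither $ha_{i}^{+}$ nor $ha_{i}^{-}$ is an edge, since otherwise replacing the edge $a_{i}a_{i}^{+}$ (or $a_{i}^{-}a_{i}$) of $C$ by the detour $a_{i}ha_{i}^{+}$ (or $a_{i}^{-}ha_{i}$) would yield a cycle strictly longer than $C$, contradicting the maximality of $|C|$. Consequently $d(h,a_{i}^{+})=d(h,a_{i}^{-})=2$ via $a_{i}$, so Fan's hypothesis gives
\[
\max\{d(h),d(a_{i}^{+})\}\ge n/2 \quad\text{and}\quad \max\{d(h),d(a_{i}^{-})\}\ge n/2.
\]
The same longest-cycle argument applied to any $(a_{i},a_{i+1})$-path routed through $H$ shows that the cyclic arc $a_{i}Ca_{i+1}$ between two consecutive attachments is at least as long as such a detour, so in particular $a_{i}^{+}\ne a_{i+1}$, leaving room on every inter-attachment arc.

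The main step, and the central obstacle, is converting these local constraints into a cycle strictly longer than $C$. I would split on whether some $h\in H$ satisfies $d(h)\ge n/2$. In that case $h$ has many neighbors, all necessarily in $V(H)\cup A$ (by definition of $A$), so one can select two neighbors $a_{i},a_{j}\in A\cap N(h)$ whose cyclic separation admits a standard crossing-chords insertion on the arc $a_{i}^{+}Ca_{j}^{-}$, splicing $h$ into $C$ to obtain a cycle of length $|C|+1$. Otherwise $d(h)<n/2$ for every $h\in H$, whence the display forces $d(a_{i}^{\pm})\ge n/2$ for all $i$; two such high-degree vertices $a_{i}^{+}$ and $a_{j}^{-}$ then satisfy an Ore-type inequality $d(a_{i}^{+})+d(a_{j}^{-})\ge n$ on the arc joining them, and the classical Ore/Bondy crossing-chords argument, combined with the $H$-detour $a_{i}\cdots a_{j}$, produces a cycle longer than $C$. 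Either alternative contradicts the maximality of $|C|$, completing the proof. The delicate point is ensuring, in the high-degree case, that the two attachments $a_{i},a_{j}$ can actually be chosen so that the Ore-style insertion succeeds; here I expect the interplay between $|A|\ge n/2$, the forbidden chords, and the cyclic geometry of $C$ to do the work.
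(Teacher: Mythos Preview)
The paper does not prove Theorem~\ref{th5}; it is quoted from Fan's 1984 paper and used as background. So there is no ``paper's own proof'' to compare against, and your attempt must stand on its own.

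Your Case~2 (every $h\in H$ has $d(h)<n/2$) is essentially correct: for each $a_i\in A$ pick $h_i\in N(a_i)\cap V(H)$; then $d(h_i,a_i^{+})=2$ forces $d(a_i^{+})\ge n/2$, and for two distinct indices $i,j$ the pair $a_i^{+},a_j^{+}$ is nonadjacent with degree sum at least $n$, contradicting the standard Bondy lemma (Lemma~\ref{le3} in the paper). One small correction: you wrote $a_i^{+}$ and $a_j^{-}$, but the classical lemma speaks about two successors (or two predecessors); mixing signs does not directly invoke it.

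Your Case~1, however, has a genuine gap. You assert that if some $h\in H$ has $d(h)\ge n/2$ then ``one can select two neighbors $a_i,a_j\in A\cap N(h)$ whose cyclic separation admits a standard crossing-chords insertion \ldots\ splicing $h$ into $C$ to obtain a cycle of length $|C|+1$.'' This is not justified, and in fact fails as stated. First, $h$ need not have \emph{any} neighbor on $C$, let alone two: its $\ge n/2$ neighbors could lie almost entirely in $H$. Second, even if $h$ is adjacent to $a_i$ and $a_j$, you have already shown $h a_i^{+}\notin E(G)$, so $h$ can never be inserted between consecutive cycle vertices; replacing an arc $a_i\overrightarrow{C}a_j$ by $a_i h a_j$ shortens the cycle, not lengthens it, and no ``crossing-chords'' trick with a single off-cycle vertex recovers the missing arc. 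Third, your final remark that ``$|A|\ge n/2$'' should do the work is unfounded: nothing you proved gives a lower bound on $|A|$.

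What is actually needed in Case~1 is the (non-trivial) fact that a longest cycle in such a graph must already contain every heavy vertex; this is the content of results like Lemma~\ref{le2} in the paper (since Fan's condition makes $G$ claw-$o$-heavy, hence $K_{1,4}$-$o$-heavy). Proving that lemma, or reproducing Fan's original argument, is precisely the work your sketch skips. Without it, Case~1 does not close and the proof is incomplete.
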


Let $G$ be a graph on $n$ vertices. For a given graph $H$, we say that $G$ is \emph{$H$-f-heavy} if for every induced subgraph $G'$ of $G$ isomorphic to $H$, and two vertices $u,v\in V(G')$, $d_{G'}(u,v)=2$ implies that $max\{d(u),d(v)\}\geq n/2$. For a family $\mathcal{H}$ of graphs, $G$ is called \emph{$\mathcal{H}$-f-heavy} if $G$ is $H$-\emph{f}-heavy for every $H\in \mathcal{H}$. Note that an $H$-free graph is also $H$-\emph{f}-heavy. In contrast to the case of forbidden subgraphs or \emph{o}-heavy subgraphs, if $H'$ is an induced subgraph of $H$, then an $H'$-\emph{f}-heavy graph is not always $H$-\emph{f}-heavy. For example, $Z_2$ is an induced subgraph of $W$, but a $Z_2$-\emph{f}-heavy graph is not necessarily $W$-\emph{f}-heavy. As above, if $H=K_{1,3}$, then we use the terminology claw-\emph{f}-heavy instead of $K_{1,3}$-\emph{f}-heavy.

For a given graph $H\in \{P_4,P_5,P_6,Z_1,Z_2,B,N,W\}$, it is interesting to compare $H$-\emph{o}-heavy graphs with $H$-\emph{f}-heavy graphs. It is not difficult to see that there exist $H$-\emph{o}-heavy graphs which are not $H$-\emph{f}-heavy, and $H$-\emph{f}-heavy graphs which are not $H$-\emph{o}-heavy. Figure 2 shows a graph which is $N$-\emph{f}-heavy but not $N$-\emph{o}-heavy, $W$-\emph{o}-heavy and $W$-\emph{f}-heavy.

\begin{center}
\begin{picture}(200,150)(0,0)

\thicklines

\put(90,80){\put(45,30){\circle*{4}} \put(45,-30){\circle*{4}} \put(-50,20){\circle*{4}} \put(-90,20){\circle*{4}} \put(80,30){\circle*{4}} \put(80,-30){\circle*{4}} \put(-50,-20){\circle*{4}} \put(110,30){\circle*{4}} \put(-90,-20){\circle*{4}} \put(110,-30){\circle*{4}} \put(95,0){\circle*{4}}

\put(-50,20){\line(-1,0){40}} \qbezier(-50,20)(-2.5,25)(45,30) \qbezier(-50,20)(-2.5,-5)(45,-30) \put(45,-30){\line(0,1){60}} \put(45,-30){\line(1,0){35}} \put(45,30){\line(1,0){35}} \put(-50,-20){\line(-1,0){40}} \put(-90,-20){\line(0,1){40}} \put(80,-30){\line(1,0){30}} \put(80,-30){\line(1,2){30}} \put(80,30){\line(1,0){30}} \put(80,30){\line(1,-2){30}}

\put(0,0){\circle{120}} \put(-20,-30){$K_r$}
}

\end{picture}

\small Fig. 2 A graph ($r\geq 7$) which is $N$-\emph{f}-heavy but not $N$-\emph{o}-heavy, $W$-\emph{o}-heavy and $W$-\emph{f}-heavy.
\end{center}

Our first aim in this paper is to find corresponding Fan-type heavy subgraph conditions which extend Theorem \ref{th1}. By Theorem \ref{th5}, we know that every 2-connected $P_3$-\emph{f}-heavy graph is Hamiltonian. Recall that $P_3$ is the only connected graph $S$ such that every 2-connected $S$-free graph is Hamiltonian and every $P_3$-free graph is $P_3$-\emph{f}-heavy. Thus $P_3$ is the only required graph $S$ such that every 2-connected $S$-\emph{f}-heavy graph is Hamiltonian. We have the following problem naturally.

\begin{problem}\label{pr1}
Which two connected graphs $R$ and $S$ other than $P_3$ imply that every 2-connected $\{R,S\}$-\emph{f}-heavy graph is Hamiltonian?
\end{problem}

By Theorem \ref{th1}, we get that (up to symmetry) $R=K_{1,3}$ and $S$ must be one of the graphs $P_4,P_5,P_6,C_3,Z_1,Z_2,B,N$ or $W$.

In fact, there are many previous results \cite{Bedrossian_Chen_Schelp,Chen_Wei_Zhang_0,Chen_Wei_Zhang_5,Li_Wei_Gao} which are related to Problem \ref{pr1}, although stated in different terminology and notations.

\begin{theorem}[Chen, Wei and Zhang \cite{Chen_Wei_Zhang_5}]\label{th6}
Let $G$ be a 2-connected graph. If $G$ is $\{K_{1,3},P_6\}$-$f$-heavy, then $G$ is Hamiltonian.

\end{theorem}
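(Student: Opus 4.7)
The plan is to argue by contradiction. Assume $G$ is a 2-connected $\{K_{1,3},P_6\}$-$f$-heavy graph on $n$ vertices that fails to be Hamiltonian, and let $C=v_1v_2\cdots v_cv_1$ be a longest cycle of $G$, chosen so that some component $H$ of $G-V(C)$ is as small as possible. Write $A=N_C(V(H))\subseteq V(C)$ for the set of attachment vertices; since $G$ is $2$-connected, $|A|\geq 2$. I would first collect the standard ``longest-cycle'' facts: for each $v_i\in A$, neither $v_i^+$ nor $v_i^-$ lies in $N(V(H))$; the sets $\{v_i^+:v_i\in A\}$ and $\{v_i^-:v_i\in A\}$ are independent; and no vertex of $H$ is adjacent to $v_i^{\pm}$. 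Each of these follows from the standard argument: if the forbidden edge existed, a path inside $H$ joining two attachment neighbours could be substituted into $C$ to produce a longer cycle.

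Next I exploit the claw-$f$-heavy hypothesis locally. Fix $v_i\in A$ and $h\in V(H)$ with $hv_i\in E(G)$. Together with $v_i^-,v_i^+$, the vertex $v_i$ becomes the centre of an induced $K_{1,3}$ unless $v_i^-v_i^+\in E(G)$; the other potential chords $hv_i^{\pm}$ are already excluded by the previous step. Since the three leaves $h,v_i^-,v_i^+$ are pairwise at distance $2$ in this claw, the $f$-heavy condition forces at least two of them to have degree at least $n/2$. Performing this analysis at every attachment site produces a supply of high-degree vertices at or adjacent to $C$. In the residual case where $v_i^-v_i^+\in E(G)$ for every $v_i\in A$, the cycle $C$ admits a short-cut chord at every attachment, and the structure becomes tight enough to contradict maximality directly via an insertion argument.

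The decisive step uses the $P_6$-$f$-heavy hypothesis. Starting from the claw analysis, I locate an induced $P_6$ of the form $h\,v_i\,v_i^+\,v_i^{++}\,v_i^{+++}\,v_i^{++++}$ (or a mirror version, or one that re-enters $H$), verifying inducedness by excluding chords via the longest-cycle facts above. On such a $P_6$ the distance-$2$ pairs are $(h,v_i^+)$, $(v_i,v_i^{++})$, $(v_i^+,v_i^{+++})$ and $(v_i^{++},v_i^{++++})$, so the $f$-heavy assumption upgrades each pair to contain a vertex of degree at least $n/2$. Combining these with the claw-derived bounds, I obtain either a vertex $v\notin V(C)$ with $d(v)\geq n/2$---contradicting the non-extendibility of $C$ through a standard Fan-type rotation---or enough high-degree vertices on $C$ to permit a chord-assisted rerouting of $C$ into a strictly longer cycle, again contradicting the choice of $C$.

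The main obstacle will be this last step. The $f$-heavy condition only controls pairs at distance exactly $2$ inside a chosen induced $P_6$, so the path must be picked so as to place the pair whose degree bound is needed at the correct positions. When no such induced $P_6$ exists, because some chord of $C$ shortcuts the candidate path, one must branch into subcases according to the location of that chord; each subcase has to be closed off either by producing a longer cycle outright, or by switching to a different induced $P_6$, possibly one beginning inside $H$, whose distance-$2$ pair delivers the required degree bound.
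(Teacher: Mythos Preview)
The paper does not give a standalone proof of Theorem~\ref{th6}; it is quoted from \cite{Chen_Wei_Zhang_5} and then re-derived as the special case $S=P_6$ of the stronger Theorem~\ref{th13}. So the relevant comparison is with Case~1 of the proof of Theorem~\ref{th13}, and there your outline has a genuine gap.

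Your proposed induced $P_6$, namely $h\,v_i\,v_i^{+}\,v_i^{++}\,v_i^{+++}\,v_i^{++++}$, need not be induced at all: nothing in the longest-cycle facts prevents $v_i$ from being adjacent to $v_i^{++}$, $v_i^{+++}$ or $v_i^{++++}$. The paper avoids this by a specific construction you are missing. It fixes \emph{two} attachment vertices $u_0,v_0$ of a shortest internal $(u_0,v_0)$-path $P=u_0w_1\cdots w_rv_0$, and then walks along $\overrightarrow{C}$ from each of them to the \emph{first non-neighbour}: let $u_{j_1}$ be the first vertex after $u_0$ with $u_0u_{j_1}\notin E(G)$, and $v_{j_2}$ the first after $v_0$ with $v_0v_{j_2}\notin E(G)$. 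A sequence of $o$-cycle/extendability claims (Claims~\ref{cl1}--\ref{cl5} in the paper) shows that $u_{j_1}u_{j_1-1}u_0w_1\cdots w_rv_0v_{j_2-1}v_{j_2}$ (when $u_0v_0\notin E(G)$) or $u_{j_1}u_{j_1-1}u_0v_0v_{j_2-1}v_{j_2}$ (when $u_0v_0\in E(G)$) is an induced path of length at least five, and that no vertex of $[u_1,u_{j_1}]$ can form a heavy pair with any vertex of $[v_1,v_{j_2}]$. Applying $P_6$-$f$-heaviness to the appropriate distance-$2$ pairs then forces $u_{j_1-1}$ and $v_{j_2-1}$ (or $u_{j_1}$ and $v_{j_2-1}$) to both be heavy, which \emph{is} such a forbidden heavy pair---that is the contradiction.

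Two further points where your sketch would not close. First, ``a vertex off $C$ with $d\ge n/2$ contradicts non-extendibility by a Fan-type rotation'' is not a valid step; what is true (and what the paper uses via Lemma~\ref{le2}) is that a longest cycle in a claw-$o$-heavy graph is heavy, so no off-cycle vertex can be heavy---but that needs to be invoked, not improvised. Second, the residual case $v_i^{-}v_i^{+}\in E(G)$ at every attachment does \emph{not} yield a longer cycle by any direct insertion; in the paper this case is exactly where the $P_6$ construction and the $o$-cycle machinery (Lemma~\ref{le1}) do the work.
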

\begin{theorem}[Bedrossian, Chen and Schelp \cite{Bedrossian_Chen_Schelp}]\label{th7}
Let $G$ be a 2-connected graph. If $G$ is $\{K_{1,3},Z_1\}$-$f$-heavy, then $G$ is Hamiltonian.
\end{theorem}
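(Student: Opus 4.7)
The plan is a contradiction argument via longest-cycle analysis. Assume that $G$ is $2$-connected, $\{K_{1,3},Z_1\}$-$f$-heavy, but not Hamiltonian. Let $C$ be a longest cycle of $G$, oriented cyclically, and write $x^+,x^-$ for the successor and predecessor of $x\in V(C)$. Since $V(G)\setminus V(C)\neq\emptyset$, by $2$-connectedness some component $H$ of $G-V(C)$ has at least two neighbors on $C$; using a standard selection I would pick $v\in V(H)$ whose set $\{x_1,\ldots,x_k\}$ of neighbors on $C$ has maximum size, with $k\geq 2$. Recall the classical longest-cycle facts that $vx_i^+\notin E(G)$ for each $i$ and $x_i^+x_j^+\notin E(G)$ for $i\neq j$, so $\{v,x_1^+,\ldots,x_k^+\}$ is independent.

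Next I would analyze the local structure at each $x_i$ via the heavy hypotheses. Assume first $vx_i^-\notin E(G)$. If the chord $x_i^-x_i^+$ is absent, then $G[\{x_i,v,x_i^+,x_i^-\}]$ is an induced $K_{1,3}$ centered at $x_i$, and the $K_{1,3}$-$f$-heavy hypothesis (applied to the three distance-$2$ pairs among its leaves) forces at least two of $\{v,x_i^+,x_i^-\}$ to have degree at least $n/2$. If instead $x_i^-x_i^+\in E(G)$, then $G[\{v,x_i^-,x_i,x_i^+\}]$ induces a $Z_1$ with pendant $v$, so applying $Z_1$-$f$-heaviness to the distance-$2$ pairs $(v,x_i^+)$ and $(v,x_i^-)$ yields $\max\{d(v),d(x_i^+)\}\geq n/2$ and $\max\{d(v),d(x_i^-)\}\geq n/2$. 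The remaining configuration $vx_i^-\in E(G)$ says that $x_i^-$ already lies in $\{x_1,\ldots,x_k\}$, i.e.\ two consecutive vertices of $C$ are both neighbors of $v$; I would handle this by a short rerouting that slides $v$ one step along the relevant segment and restores the non-consecutive setup, or by directly using the extra edge to extract a longer cycle.

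Collecting the degree information, I would split on whether $d(v)\geq n/2$. If $d(v)\geq n/2$, then combining $d(v)$ with the structure of the independent set $\{v,x_1^+,\ldots,x_k^+\}$ and Ore-type exchange along $C$ produces a longer cycle, contradicting the maximality of $C$. If $d(v)<n/2$, then the case analysis above forces $d(x_i^+)\geq n/2$ for every $i$ (and symmetrically $d(x_i^-)\geq n/2$ whenever relevant), and an insertion argument in the spirit of the proofs of Theorems~\ref{th6} and~\ref{th7} lets me reinsert $v$ into $C$ between some $x_i$ and $x_i^+$, producing a cycle through $V(C)\cup\{v\}$ and contradicting the choice of $C$.

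The step I expect to be the hardest is the third configuration (consecutive $v$-neighbors on $C$, interacting with possible chords $x_i^-x_i^+$) together with the final insertion: the heavy hypotheses deliver clean degree estimates, but converting those estimates into an actual longer cycle requires a careful chord-and-segment analysis along $C$ that is sensitive to the interplay between the $K_{1,3}$-heavy and $Z_1$-heavy conditions. This is precisely the stage where earlier papers in this line invoke auxiliary tools (Bondy's insertion lemma, or the segment lemma of Chen-Wei-Zhang), which must be adapted to the $f$-heavy setting.
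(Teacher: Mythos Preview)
Your overall strategy is sound, but two steps are genuinely broken as written. First, the claim that some $v\in V(H)$ has $k\geq 2$ neighbours on $C$ does not follow from $2$-connectedness: the component $H$ may attach to $C$ at two distinct vertices via two \emph{different} vertices of $H$, so no single $v$ need have two $C$-neighbours. The fix is to work with a shortest path $P=u_0w_1\cdots w_rv_0$ internally disjoint from $C$ (exactly as in the paper's proof of Theorem~\ref{th13}) and run your local $K_{1,3}/Z_1$ analysis at the two endpoints $u_0,v_0$ separately, using $w_1$ and $w_r$ as the pendant vertices. Note also that $vx_i^-\in E(G)$ is impossible for a longest cycle (replace $x_i^-x_i$ by $x_i^-vx_i$), so your ``third configuration'' never occurs. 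Second, your endgame is wrong: you cannot ``reinsert $v$ between $x_i$ and $x_i^+$'', since $vx_i^+\notin E(G)$ is one of your own standing facts. The correct finish is that $w_1$ (and $w_r$) is light by Lemma~\ref{le2}, so your case analysis forces $d(u_1),d(v_1)\geq n/2$; then $u_1,v_1\in A^+$ are nonadjacent with degree sum at least $n$, contradicting Lemma~\ref{le3}.

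For comparison, the paper does not prove Theorem~\ref{th7} directly: it is cited from \cite{Bedrossian_Chen_Schelp}, and the paper recovers it only as a corollary of Theorem~\ref{th13}, via the observations that claw-$f$-heavy implies claw-$o$-heavy and that $Z_1$-$f$-heavy implies $N$-$f$-heavy (every distance-$2$ pair in an induced $N$ lies in an induced $Z_1$). That route passes through the machinery of composed graphs, canonical orderings and $u_0$-good pairs. Your repaired direct argument is much shorter and is essentially the original Bedrossian--Chen--Schelp proof; it exploits the fact that $Z_1$ is small enough for the local analysis at a single attachment point to immediately force the successor to be heavy, a shortcut unavailable for the larger graphs $S$ treated in Theorem~\ref{th13}.
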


\begin{theorem}[Li, Wei and Gao \cite{Li_Wei_Gao}]\label{th8}
Let $G$ be a 2-connected graph. If $G$ is $\{K_{1,3},B\}$-$f$-heavy, then $G$ is Hamiltonian.
\end{theorem}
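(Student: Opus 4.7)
The plan is a proof by contradiction using a longest-cycle argument. Suppose $G$ is a $2$-connected $\{K_{1,3},B\}$-$f$-heavy graph that is not Hamiltonian. Fix a longest cycle $C$ in $G$ with an orientation; by hypothesis $V(G)\setminus V(C)\neq\emptyset$, so there is a component $D$ of $G-V(C)$ with attachment set $A=N_{C}(D)$ of size at least $2$. For $x\in V(C)$, let $x^{+},x^{-}$ denote its successor and predecessor on $C$. The usual maximality consequences apply: for each $x\in A$ and every $d\in N(x)\cap D$, the vertex $d$ is nonadjacent to $x^{+}$ and $x^{-}$; and for distinct $x,y\in A$, the successors $x^{+},y^{+}$ are nonadjacent. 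These facts form the scaffolding on which the induced claws and bulls of interest will be built.

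The first stage uses the claw-$f$-heavy hypothesis. For $x\in A$ and $d\in N(x)\cap D$, inspect $\{x,x^{-},x^{+},d\}$. When $x^{-}x^{+}\notin E(G)$ this is an induced $K_{1,3}$ centred at $x$, and since every pair of its end-vertices is at distance two, at least one of $d(x^{-}),d(x^{+}),d(d)$ is at least $n/2$. The maximality of $C$ and the structure of $D$ typically force $d(d)$ to be small, so one of $x^{-},x^{+}$ is ``heavy'' in the sense of having degree at least $n/2$. When instead $x^{-}x^{+}\in E(G)$, the edge $x^{-}x^{+}$ is a short chord of $C$ that may be exploited to reroute through $D$; the maximality of $C$ bounds how often such chords can occur.

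The second stage exploits the bull-$f$-heavy condition on the extended configuration $\{x^{--},x^{-},x,x^{+},d\}$ (or its mirror) when $x^{-}x^{+}\in E(G)$. With the labelling $a_{1}=x^{-},\,a_{2}=x^{+},\,a_{3}=x,\,b_{1}=x^{--},\,b_{3}=d$, this subgraph is an induced bull provided the non-edges $x^{--}x,\,x^{--}x^{+},\,x^{--}d,\,dx^{-},\,dx^{+}$ all hold. The four distance-$2$ pairs of $B$ then yield four statements of the form $\max\{d(u),d(v)\}\geq n/2$, locating further heavy vertices at controlled positions relative to $C$. When any of the forbidden edges actually occurs, the bull degenerates, but each collapse either unveils an alternative induced $K_{1,3}$ or $B$ on which to apply the hypothesis, or produces enough extra edges on $C$ to permit an Ore-type insertion of $D$ into $C$, yielding a cycle longer than $C$ and contradicting maximality; the insertion step is the same mechanism used in the proofs of Theorems~\ref{th6} and \ref{th7}.

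The main obstacle I anticipate is the case analysis in this second stage: the putative induced bull can break in several structurally distinct ways, and for each I must either produce another induced $K_{1,3}$ or $B$ to feed the hypothesis, or directly construct a longer cycle from the newly appearing chord. A secondary delicate point, emphasized in the paper, is that $f$-heaviness is not inherited by induced subgraphs (so, e.g., one cannot silently deduce $Z_{1}$-$f$-heaviness from $B$-$f$-heaviness); each invocation of the hypothesis must therefore pin down a genuine induced $K_{1,3}$ or $B$ in $G$ rather than quietly pass to a smaller substructure.
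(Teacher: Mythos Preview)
The paper does not prove Theorem~\ref{th8} directly; it is cited from \cite{Li_Wei_Gao} and then recovered as a special case of the main Theorem~\ref{th13}. The reduction is that every $B$-$f$-heavy graph is $N$-$f$-heavy (each distance-$2$ pair in an induced $N$ already lies in one of the three induced bulls obtained by deleting a pendant) and every claw-$f$-heavy graph is claw-$o$-heavy, so Theorem~\ref{th13} with $S=N$ applies. The engine behind Theorem~\ref{th13} is quite unlike your plan: it uses the $o$-cycle and $(x,y,z)$-composed machinery of \cite{Li_Ryjacek_Wang_Zhang} (Lemmas~\ref{le1}--\ref{le5}), growing a composed block around each attachment point on the longest cycle until a heavy pair or heavy triangle appears (Claim~\ref{cl8}), and then invoking the good-pair Lemma~\ref{le5} to extend the cycle. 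No explicit induced bull is ever exhibited in the paper's argument.

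Your route is a direct longest-cycle case analysis in the spirit of the original \cite{Li_Wei_Gao}, and the opening moves are fine: from an induced claw $\{x,x^{-},x^{+},d\}$ the claw-$f$-heavy hypothesis forces at least two end-vertices to be heavy, and Lemma~\ref{le2} makes $d$ light, so both $x^{-},x^{+}$ are heavy when $x^{-}x^{+}\notin E(G)$. The proposal is thin exactly where you say it is. When $x^{-}x^{+}\in E(G)$, your candidate bull on $\{x^{--},x^{-},x,x^{+},d\}$ needs the non-edges $x^{--}x$, $x^{--}x^{+}$, $x^{--}d$; while $dx^{\pm}\notin E(G)$ follows from maximality, the others do not, and you have not shown that each failure either yields a longer cycle or another usable induced $K_{1,3}$ or $B$. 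More importantly, even after you locate heavy vertices at $x^{\pm}$ and at $y^{\pm}$ for a second attachment $y$, you still have to manufacture a longer cycle, and Lemma~\ref{le3} blocks the obvious Ore-type insertion via $x^{+},y^{+}$ (or $x^{-},y^{-}$); your sketch does not indicate which combination of bull-derived heavy vertices finally produces the contradiction. The paper's composed-graph framework absorbs all of this bookkeeping; a direct proof along your lines is feasible, but the case analysis you anticipate \emph{is} the proof, and it is not yet carried out here.
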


\begin{theorem}[Chen,Wei and Zhang \cite{Chen_Wei_Zhang_0}]\label{th9}
Let $G$ be a 2-connected graph. If $G$ is $\{K_{1,3},N\}$-$f$-heavy, then $G$ is Hamiltonian.
\end{theorem}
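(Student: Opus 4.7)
The plan is to argue by contradiction. Suppose $G$ is a 2-connected $\{K_{1,3}, N\}$-$f$-heavy graph on $n$ vertices that is not Hamiltonian, and fix a longest cycle $C = v_1 v_2 \cdots v_c v_1$ in $G$ with a fixed orientation. Since $G$ is not Hamiltonian, $V(G) \setminus V(C) \neq \emptyset$, and by 2-connectivity I may select a component $H$ of $G - V(C)$ together with two distinct attachments $v_i, v_j \in V(C)$ joined to $H$ by internally disjoint paths. For each such attachment $v$ I would record the classical non-extendability observations: the predecessor $v^-$ and successor $v^+$ of $v$ on $C$ are non-adjacent (else $C$ could be extended through $H$), and the internal neighbor of $v$ on the attaching path is non-adjacent to both $v^-$ and $v^+$.

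The first structural step is to force an induced claw at an attachment point. For an attachment $v$ with $y \in V(H)$ the end of a short attaching path, the set $\{v,v^-,v^+,y\}$ induces a $K_{1,3}$ centered at $v$ as soon as $v^-v^+$, $yv^-$, $yv^+$ are all absent, which the longest-cycle analysis guarantees for a suitably chosen $y$. Applying the $K_{1,3}$-$f$-heavy hypothesis to the three distance-$2$ pairs among $\{v^-,v^+,y\}$ yields that at least two of these three vertices have degree $\geq n/2$. Iterating this at several attachment points produces a rich supply of ``heavy'' vertices located either just outside $H$ on $C$, or inside $H$ itself.

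The heart of the argument is to exploit the $N$-$f$-heavy condition at a configuration built around a short arc of $C$. Concretely, I would locate three attachment points $v_1,v_2,v_3$ spanning a triangle on $C$ (this arises naturally when two consecutive attachments are within distance two along $C$) together with three pendants $y_1,y_2,y_3 \in V(H)$ such that $\{v_1,v_2,v_3,y_1,y_2,y_3\}$ induces an $N$; then the six distance-$2$ pairs $(y_i,v_j)$ with $i \neq j$ are exactly those tested by the $f$-heavy hypothesis, giving a heavy vertex in each such pair. Combining this with the claw-$f$-heavy information of the previous paragraph, one obtains a vertex $u$ of degree $\geq n/2$ whose neighborhood on $C$ is dense enough to support a Pósa--Fan rotation, and I would use that rotation together with the attaching paths from $H$ to build a cycle longer than $C$, contradicting the maximality of $C$.

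The main obstacle is the boundary case analysis: the attachments may be consecutive on $C$ so no triangle arises, the pendants $y_i$ may coincide or be joined by unwanted edges, and extra chords of $C$ may destroy the would-be induced $N$. To handle these degeneracies I would either use standard short-arc insertion to reconfigure the attachments into a triangle on $C$, or show that the failure of the induced-net configuration forces an induced claw with \emph{all three} end vertices heavy, and then complete the proof by the same cycle-extension argument. Once every configuration is shown to contradict the maximality of $C$, the theorem follows.
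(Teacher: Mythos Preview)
Your proposal has genuine gaps that would prevent it from becoming a proof, and the overall architecture differs substantially from how the paper handles this result (via the stronger Theorem~\ref{th13}).

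The central problem is your construction of the induced net. You propose to ``locate three attachment points $v_1,v_2,v_3$ spanning a triangle on $C$'' with pendants $y_1,y_2,y_3\in V(H)$. This does not arise naturally: three attachment vertices on $C$ form a triangle only if they are pairwise adjacent, an extremely special configuration that longest-cycle arguments do not produce. Worse, placing all three pendants inside a single component $H$ gives you no control over the edges $y_iy_j$ or $y_iv_j$ ($i\neq j$), so the subgraph need not be an \emph{induced} $N$; and if $H$ is small (e.g.\ a single vertex) you cannot find three distinct pendants at all. Your proposed fallback---reconfiguring via ``short-arc insertion'' or falling through to a claw with three heavy ends---is not a plan but a restatement of the difficulty. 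Finally, the endgame (``one heavy vertex $u$ $\Rightarrow$ P\'osa--Fan rotation $\Rightarrow$ longer cycle'') is not correct as stated: a single vertex of degree $\ge n/2$ does not by itself force a rotation that absorbs $H$.

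The paper's route is structurally different. It does not look for an induced $N$ directly. Instead, around an attachment $u_0$ it uses the claw condition to get $u_{-1}u_1\in E(G)$ (Claim~\ref{cl6}), so $\{u_{-1},u_0,u_1\}$ is a triangle, and then grows an $(u_{-k},u_0,u_l)$-\emph{composed} region along $C$. The $N$-$f$-heavy hypothesis is used only negatively, inside Claim~\ref{cl8}: if the growing region contains no heavy pair and no heavy triangle, then any induced $N$ in it would force three heavy vertices and hence a heavy pair or heavy triangle, a contradiction; so the region is actually $\{K_{1,3},N\}$-free, and a layer-by-layer distance argument (Claims~\ref{su8.3}--\ref{su8.5}) shows the composed structure extends one step further. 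Maximality of $k+l$ then yields a heavy pair across the boundary, which is exactly the data needed for the ``$u_0$-good'' pair machinery (Lemmas~\ref{le4} and~\ref{le5}) to manufacture a longer cycle. The induced nets that implicitly appear in this argument have their triangle on a short arc of $C$ around a \emph{single} attachment and take pendants from both sides: one from the internal path $P$ (the vertex $w_1$) and others from further along $C$ (e.g.\ $u_{l+1}$), not from three separate attachments into $H$. If you want to salvage your approach, that is the configuration to aim for.
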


In this paper, we prove the following two results.

\begin{theorem}\label{th10}
Let $G$ be a 2-connected graph. If $G$ is $\{K_{1,3},Z_2\}$-$f$-heavy, then $G$ is Hamiltonian.
\end{theorem}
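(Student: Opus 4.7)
The plan is to argue by contradiction. Suppose $G$ is a $2$-connected $\{K_{1,3},Z_2\}$-$f$-heavy graph on $n$ vertices that is not Hamiltonian. Fix a longest cycle $C$ in $G$ with an orientation and let $H$ be a component of $G-V(C)$. From $2$-connectedness and the maximality of $C$, the standard insertion/rotation lemmas yield $|N_C(H)|\ge 2$, and for all distinct $u,v\in N_C(H)$ and all $h\in V(H)$ one has $u^+v^+,u^-v^-\notin E(G)$ and $hu^{\pm}\notin E(G)$.

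Pick $x\in V(H)$ with a neighbor $u\in N_C(x)$ and examine $\{u^-,u,u^+,x\}$. Since $xu^{\pm}\notin E$, either this set induces a claw centered at $u$, in which case $K_{1,3}$-$f$-heaviness applied to each distance-$2$ pair from the end-vertices $\{u^-,u^+,x\}$ forces a vertex of degree at least $n/2$ in each; or else $u^-u^+\in E(G)$ and $T:=\{u,u^-,u^+\}$ spans a triangle. In the triangle case, the goal is to locate a vertex $y\in N(x)\setminus(T\cup N(u)\cup N(u^-)\cup N(u^+))$, so that $T$ together with the tail $u\,x\,y$ induces a copy of $Z_2$; then the $Z_2$-$f$-heavy hypothesis, applied to the distance-$2$ pairs $(u^-,x)$, $(u^+,x)$ and $(u,y)$, again produces three vertices of degree at least $n/2$.

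With these heavy vertices in hand, standard insertion and rotation techniques exploiting the fact that a vertex of degree $\ge n/2$ has many neighbors on $C$ enable one to splice the path through $x$ into $C$ and build a cycle longer than $C$, contradicting maximality. Running the same analysis with predecessors in place of successors, at the second-level positions $u^{--}$ and $u^{++}$, and at a second neighbor $u'\in N_C(x)$, supplies the auxiliary induced claws and induced $Z_2$'s needed to cover every position of $x$ relative to $C$.

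The main obstacle I foresee is the residual case in which no vertex $y$ as above exists, i.e.\ $N(x)\subseteq T\cup N(u)\cup N(u^-)\cup N(u^+)$. Here $N(x)$ is so tightly confined to the neighborhood of $T$ that one must iterate the induced-claw/induced-$Z_2$ analysis at shifted positions along $C$ and combine the conclusions with the degenerate subcase $H=\{x\}$, in which the tail endpoint $v_2$ of the sought $Z_2$ must also lie on $C$ and is therefore constrained by the maximality of $C$. Carefully bookkeeping these shifted configurations, and playing the $K_{1,3}$-$f$-heavy hypothesis against the $Z_2$-$f$-heavy hypothesis to convert the degree bounds $d(\cdot)\ge n/2$ into a concrete cycle extension, is expected to be the technical crux of the argument.
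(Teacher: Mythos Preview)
Your outline has the right spirit but the contradiction mechanism you invoke is the wrong one, and this is a genuine gap rather than a detail to be filled in. Having found that $u^-$ and $u^+$ are heavy, you propose to ``splice the path through $x$ into $C$'' via ``standard insertion and rotation''; but a single heavy vertex near an attachment point does not by itself enable insertion of $x$. The actual contradiction in the paper is a degree-sum one against Lemma~\ref{le3}: one works at \emph{two} attachment points $u_0,v_0$ simultaneously (connected by a shortest internal path $P=u_0w_1\cdots w_rv_0$, not just a single off-cycle vertex), shows that $u_{-1},u_1$ are both heavy from the $Z_2$ at $u_0$ and that $v_{-1},v_1$ are both heavy from the $Z_2$ at $v_0$, and then observes $d(u_1)+d(v_1)\ge n$ (or $d(u_{-1})+d(v_{-1})\ge n$), which contradicts the fact that successors (respectively predecessors) of attachment points have pairwise degree sum $<n$. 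Your single-attachment treatment, with a vague appeal to rotation, does not reach this.

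Second, the ``main obstacle'' you flag is in fact resolved in the paper by a device your outline does not have. The tail of the $Z_2$ is not sought among arbitrary $y\in N(x)$; it is the pair $w_1,w_2$ on the internal path $P$ (so when $u_{-1}u_1\in E(G)$ one takes $G[\{w_2,w_1,u_0,u_{-1},u_1\}]\cong Z_2$), and non-adjacency of $w_1,w_2$ to $u_{\pm1}$ follows from the nonextendability of $C$ via $o$-cycle arguments (Claim~\ref{cl1} and Lemma~\ref{le1}). When $r=1$ this forces the case split $u_0v_0\in E(G)$ versus $u_0v_0\notin E(G)$; in the former the paper does \emph{not} build a $Z_2$ with triangle $\{u_{-1},u_0,u_1\}$ at all, but instead introduces the first non-neighbors $u_{j_1},v_{j_2}$ along $C$ and uses $G[\{u_0,w_1,v_0,u_{j_1-1},u_{j_1}\}]\cong Z_2$ together with the delicate Claim~\ref{cl4} to obtain a heavy pair $u_{j_1-1},v_{j_2-1}$ that again violates a nonextendability constraint. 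None of this structure---the path $P$, the vertices $u_{j_1},v_{j_2}$, the $o$-cycle machinery behind Claims~\ref{cl1}--\ref{cl5}---is present in your proposal, and the residual case you single out cannot be closed without it.
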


\begin{theorem}\label{th11}
Let $G$ be a 2-connected graph. If $G$ is $\{K_{1,3},W\}$-$f$-heavy, then $G$ is Hamiltonian.
\end{theorem}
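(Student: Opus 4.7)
The plan is to argue by contradiction and reduce the problem to the already-established Theorem~\ref{th10}. Suppose $G$ is a 2-connected $\{K_{1,3}, W\}$-$f$-heavy graph that is not Hamiltonian. If $G$ were also $Z_2$-$f$-heavy, then, being claw-$f$-heavy and 2-connected, it would be Hamiltonian by Theorem~\ref{th10}. Hence $G$ must contain an induced copy $Z$ of $Z_2$ which is not $f$-heavy, say on vertices $\{a_1, a_2, a_3, b_1, c_1\}$ with triangle $a_1 a_2 a_3$ and pendant path $a_1 b_1 c_1$; so some distance-$2$ pair in $Z$ — necessarily one of $(a_2, b_1)$, $(a_3, b_1)$, $(a_1, c_1)$ — has both endpoints of degree less than $n/2$. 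Call this the \emph{bad pair}.

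The central step is an extension argument. I try to find a vertex $y \notin V(Z)$ such that $y$ is adjacent to exactly one of $a_2, a_3$ and non-adjacent to the other four vertices of $Z$. If such a $y$ exists, then $V(Z) \cup \{y\}$ induces a copy of $W$ in $G$ (triangle $a_1 a_2 a_3$; pendant path $a_1 b_1 c_1$; pendant edge $a_j y$ for $j \in \{2, 3\}$). Each of the three distance-$2$ pairs of $Z$ listed above remains at distance $2$ in this induced $W$, so the $W$-$f$-heavy hypothesis applied to the bad pair forces one of its endpoints to have degree at least $n/2$, contradicting the choice of $Z$.

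The main obstacle is the case in which no such extension vertex exists. Then every neighbor of $a_2$ outside $V(Z)$ is adjacent to at least one element of $\{a_1, a_3, b_1, c_1\}$, and analogously for $a_3$. Since $G$ is 2-connected, both $a_2$ and $a_3$ do have neighbors outside $V(Z)$, so this constraint produces many edges between $N(\{a_2, a_3\}) \setminus V(Z)$ and $V(Z)$. I would exploit this in two complementary ways: first, to locate an induced $K_{1,3}$ whose center is one of the "blocking" external vertices and whose leaves include both endpoints of the bad pair — claw-$f$-heaviness then immediately contradicts the bad-pair assumption; second, to construct an alternative induced copy of $W$ built on a different triangle (for instance, $a_2 a_3 z$ where $z$ is a common neighbor outside $V(Z)$) which again places the bad pair at distance $2$ and invokes $W$-$f$-heaviness. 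A short symmetric case analysis across the three possible bad pairs and the two roles $a_2, a_3$ for the attachment site reduces every sub-case to a contradiction. The combinatorial heart of the argument lies in verifying inducedness of these auxiliary $K_{1,3}$'s and $W$'s while handling the accumulated non-adjacency constraints carefully.
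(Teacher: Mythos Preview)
Your approach has a genuine gap. After the opening reduction, you never again use the hypothesis that $G$ is non-Hamiltonian: your steps (4)--(6) attempt to derive a contradiction from the bad induced $Z_2$ using only 2-connectivity and the $\{K_{1,3},W\}$-$f$-heavy hypothesis. In other words, what you are really trying to prove is
\[
\text{2-connected} \ + \ \{K_{1,3},W\}\text{-}f\text{-heavy} \ \Longrightarrow \ Z_2\text{-}f\text{-heavy},
\]
and this implication is \emph{false}. Here is a counterexample. Take a clique $K_m$ (with $m\geq 4$) on a vertex set containing three distinguished vertices $a_2,a_3,x$, and add three new vertices $a_1,b_1,c_1$ together with the edges $a_1a_2,\,a_1a_3,\,a_1b_1,\,b_1c_1,\,c_1x$. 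This graph is 2-connected, claw-free (every vertex in $K_m$ has at most one neighbour outside $K_m$; $a_1$'s neighbourhood $\{a_2,a_3,b_1\}$ contains the edge $a_2a_3$; and $b_1,c_1$ have degree~2), and $W$-free (any six vertices contain at least three vertices of $K_m$; a short check shows the induced subgraph is never isomorphic to $W$). Yet $\{a_1,a_2,a_3,b_1,c_1\}$ induces a copy of $Z_2$ in which the distance-2 pair $(a_1,c_1)$ has $d(a_1)=3$ and $d(c_1)=2$, both below $n/2=(m+3)/2$. So the graph is not $Z_2$-$f$-heavy, and your case analysis --- which is supposed to produce a contradiction here --- cannot possibly close. (The graph is in fact Hamiltonian, so it does not contradict Theorem~\ref{th11}; it only shows that your argument discards essential information.)

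Concretely, in this example there is no extension vertex $y$ (every neighbour of $a_2$ or $a_3$ outside the $Z_2$ lies in $K_m$ and is adjacent to both), there is no induced claw whose leaves include both $a_1$ and $c_1$ (the only common neighbour $b_1$ has no third independent neighbour), and there is no alternative induced $W$ at all. So both of your proposed fallback mechanisms fail simultaneously. The non-Hamiltonicity hypothesis has to be used in a structural way, not merely to trigger the reduction to Theorem~\ref{th10}.

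For comparison, the paper does not reduce Theorem~\ref{th11} to Theorem~\ref{th10}. It proves both simultaneously (indeed, the stronger claw-$o$-heavy version, Theorem~\ref{th13}) via a direct longest-cycle argument: one takes a longest cycle $C$, an internal path $P$ between two cycle vertices $u_0,v_0$, and builds ``composed'' subgraphs around $u_0$ and $v_0$ along $C$ (Claims~7--10), using the $W$-$f$-heavy hypothesis only to rule out certain induced $W$'s that arise inside these local pieces. The non-Hamiltonicity (maximality of $C$) is used throughout.
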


On the other hand, we have the following

\begin{remark}\label{re1}
Since $C_3$ is a clique, it contains no pairs of vertices at distance 2. For this reason, we say that every graph is $C_3$-\emph{f}-heavy. On the other hand, there indeed exist 2-connected claw-free graphs which are not Hamiltonian (a 3-connected claw-free non-Hamiltonian graph is shown in \cite{Matthews_Sumner}). Thus, not every 2-connected $\{K_{1,3},C_3\}$-\emph{f}-heavy graph is Hamiltonian.
\end{remark}

Note that every $P_i$-$f$-heavy ($i=4,5$) graph is $P_6$-$f$-heavy, every $Z_1$-$f$-heavy graph is $B$-$f$-heavy ($N$-$f$-heavy) and every $B$-$f$-heavy graph is $N$-$f$-heavy. Together with Remark \ref{re1} and Theorems \ref{th6}, \ref{th9}, \ref{th10} and \ref{th11}, we have

\begin{theorem}\label{th12}
Let $R$ and $S$ be connected graphs other than $P_3$ and let $G$ be a 2-connected graph. Then $G$ being $\{R,S\}$-f-heavy implies $G$ is Hamiltonian if and only if (up to symmetry) $R=K_{1,3}$ and $S=P_4,P_5,P_6,Z_1,Z_2,B,N$ or $W$.
\end{theorem}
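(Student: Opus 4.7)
The plan is to separate Theorem \ref{th12} into the \emph{sufficiency} direction (each listed pair forces Hamiltonicity) and the \emph{necessity} direction (no other pair works), and then to observe that both are essentially immediate consequences of results that are already available in the paper. The new content of this paper (Theorems \ref{th10} and \ref{th11}) is precisely what is needed to complete the sufficiency side; Bedrossian's Theorem~\ref{th1} together with Remark~\ref{re1} will supply the necessity side.

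For sufficiency, I would enumerate the candidates for $S$ and point to the theorem that resolves each one. Theorems~\ref{th6}, \ref{th7}, \ref{th8}, \ref{th9}, \ref{th10} and \ref{th11} directly handle $S=P_6, Z_1, B, N, Z_2$ and $W$, respectively. The two remaining cases $S=P_4$ and $S=P_5$ I would reduce to the $P_6$ case by the elementary observation that every $P_4$-$f$-heavy graph is $P_6$-$f$-heavy, and likewise every $P_5$-$f$-heavy graph is $P_6$-$f$-heavy. This reduction requires only noting that in any induced $P_6=v_1v_2\cdots v_6$ the four distance-$2$ pairs $(v_1,v_3)$, $(v_2,v_4)$, $(v_3,v_5)$, $(v_4,v_6)$ are each at distance $2$ in one of the induced sub-paths $v_1v_2v_3v_4$ or $v_3v_4v_5v_6$ (and similarly for $P_5$), so the Fan-type degree bound is inherited. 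Together with the assumption that $G$ is claw-$f$-heavy, this reduces the $P_4$ and $P_5$ cases to Theorem~\ref{th6}.

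For necessity I would argue as follows. Every $H$-free graph is trivially $H$-$f$-heavy, so if the $\{R,S\}$-$f$-heavy condition implies Hamiltonicity for every $2$-connected graph, then so does the $\{R,S\}$-free condition. Applying Bedrossian's Theorem~\ref{th1} forces $R=K_{1,3}$ (up to symmetry) and $S\in\{P_4,P_5,P_6,C_3,Z_1,Z_2,B,N,W\}$. It then remains to eliminate $S=C_3$, which is exactly what Remark~\ref{re1} accomplishes: since $C_3$ has no pair of vertices at distance $2$, every graph is $C_3$-$f$-heavy vacuously, yet by Matthews and Sumner \cite{Matthews_Sumner} there exists a $3$-connected claw-free non-Hamiltonian graph, which is simultaneously $\{K_{1,3},C_3\}$-$f$-heavy and not Hamiltonian.

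The main obstacle in this plan is not really a conceptual one but a bookkeeping one: one must be careful that the induced-subgraph implications used in the sufficiency proof actually hold in the $f$-heavy setting, since, as the paper already emphasizes (via the example $Z_2\subset W$), $H'$-$f$-heavy does \emph{not} in general imply $H$-$f$-heavy when $H'$ is an induced subgraph of $H$. Thus the only non-trivial step is to verify, pair by distance-$2$ pair, that the implications $P_4$-$f$-heavy $\Rightarrow$ $P_6$-$f$-heavy and $P_5$-$f$-heavy $\Rightarrow$ $P_6$-$f$-heavy do hold, which is the short combinatorial check indicated above. Once these implications are in hand, Theorem~\ref{th12} is just the concatenation of Theorems~\ref{th1}, \ref{th6}--\ref{th11} and Remark~\ref{re1}.
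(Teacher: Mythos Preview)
Your proposal is correct and matches the paper's own argument essentially line for line: the paper derives Theorem~\ref{th12} from Theorems~\ref{th6}, \ref{th9}, \ref{th10}, \ref{th11} together with Remark~\ref{re1} and Theorem~\ref{th1}, using exactly the reductions $P_4$-$f$-heavy $\Rightarrow P_6$-$f$-heavy and $P_5$-$f$-heavy $\Rightarrow P_6$-$f$-heavy that you spell out. The only cosmetic difference is that the paper also reduces $Z_1$ and $B$ to the $N$ case (so it invokes Theorem~\ref{th9} rather than Theorems~\ref{th7} and~\ref{th8}), but your direct citation of Theorems~\ref{th7} and~\ref{th8} is equally valid.
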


Theorem \ref{th12} gives a complete answer to Problem \ref{pr1}.

Moreover, we can pose the following two problems naturally.

\begin{problem}\label{pr2}
Which two connected graphs $R$ and $S$ other than $P_3$ imply that every 2-connected $R$-\emph{o}-heavy and $S$-\emph{f}-heavy graph is Hamiltonian?
\end{problem}

\begin{problem}\label{pr3}
Which two connected graphs $R$ and $S$ other than $P_3$ imply that every 2-connected $R$-free and $S$-\emph{f}-heavy graph is Hamiltonian?
\end{problem}

By Theorem \ref{th1}, Problem \ref{pr2} is equivalent to the following two problems.

\setcounter{problem}{2}
\begin{subproblem}\label{subpr2.1}
Which connected graphs $S$ other than $P_3$ imply that every 2-connected claw-\emph{o}-heavy and $S$-\emph{f}-heavy graph is Hamiltonian?
\end{subproblem}

\begin{subproblem}\label{subpr2.2}
Which connected graphs $S$ other than $P_3$ imply that every 2-connected claw-$f$-heavy and $S$-$o$-heavy graph is Hamiltonian?
\end{subproblem}

For Problem \ref{subpr2.1}, by Theorem \ref{th1} and Remark \ref{re1}, we know that $S$ must be one of the graphs $P_4,P_5,P_6,Z_1,Z_2,B,N$ or $W$.

In this paper, instead of Theorems \ref{th10} and \ref{th11}, we prove the following stronger result.

\begin{theorem}\label{th13}
Let $G$ be a 2-connected graph. If $G$ is claw-$o$-heavy and $S$-f-heavy, where $S\in\{P_6,Z_2,W,N\}$, then $G$ is Hamiltonian.
\end{theorem}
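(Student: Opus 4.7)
I would argue by contradiction. Suppose $S\in\{P_6,Z_2,W,N\}$ and $G$ is a 2-connected, claw-$o$-heavy, $S$-$f$-heavy graph that is not Hamiltonian. Fix a longest cycle $C$ of $G$, cyclically oriented, and a component $H$ of $G-V(C)$. Let $A=(x_1,\dots,x_k)$ list the attachment vertices of $H$ on $C$ in cyclic order; by 2-connectedness $k\ge 2$. I would first collect the standard longest-cycle facts: for every $i$, neither $x_i^+$ nor $x_i^-$ has a neighbor in $H$; the set $\{x_1^+,\dots,x_k^+\}$ is independent; and more generally no rerouting of $C$ that absorbs a vertex of $H$ is possible. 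These follow from the usual ``otherwise one gets a longer cycle'' arguments and will be the backbone of all the non-edges used later.

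The next step uses the claw-$o$-heavy hypothesis to produce high-degree cycle-vertices. For an attachment vertex $x_i$ and a chosen neighbor $h_i\in N_H(x_i)$, I would inspect the induced subgraph on $\{h_i,x_i,x_i^+,x_i^-\}$ and, if that has too many edges, on enlarged configurations such as $\{h_i,x_i,x_i^{++},x_{i+1}^+\}$. The longest-cycle non-adjacencies guarantee that some such set is an induced claw centered at $x_i$. Applying the claw-$o$-heavy hypothesis gives two nonadjacent vertices with degree sum at least $n$; since vertices of $H$ are forbidden from many adjacencies (in particular to any $x_j^+$), no vertex of $H$ can have degree as large as $n/2$, so the nonadjacent pair must include a cycle-vertex $w_i$ with $d(w_i)\ge n/2$. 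Now I would use the $S$-$f$-heavy hypothesis by exhibiting, for each $S$, an induced copy of $S$ extending from $x_i$ into $H$ whose distance-$2$ pair contributes an additional high-degree vertex. The constructions specialize as follows: for $P_6$, an induced path from inside $H$ through $x_i$ along $C$; for $Z_2$, a triangle at or beside $x_i$ together with a pendant $P_3$ going into $H$; for $W$, the same with an extra pendant on the triangle; for $N$, a triangle with three pendants, one of which lies inside $H$ while the other two are cycle-vertices near distinct attachments $x_i,x_j$.

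Finally, I would combine the degree lower bounds with a Bondy--Woodall-type crossing argument: between two consecutive attachments, the accumulated high-degree vertices must share enough neighbors with $H$-vertices to permit a rerouting of $C$ through $H$ that yields a strictly longer cycle, contradicting the maximality of $C$. The main obstacles I expect are the cases $S=W$ and $S=N$: in each, one must build an induced six-vertex subgraph with a specific pendant structure while verifying the absence of many potential chords, typically by splitting into sub-cases according to the diameter of $H$ and the relative position of two distinct attachments. In contrast, $P_6$ becomes easy as soon as $H$ contains even a short induced path reaching $C$, and $Z_2$ follows once a triangle near an attachment has been exhibited (which the longest-cycle conditions provide by an exchange argument); in practice I would handle $N$ and $W$ first and then deduce the $P_6$ and $Z_2$ cases by extracting the appropriate induced subgraph from the configurations already built.
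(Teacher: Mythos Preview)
Your opening moves match the paper: longest cycle $C$, a path $P$ off $C$ between two attachment vertices $u_0,v_0$, and the standard non-adjacencies. Your treatment of $P_6$ and $Z_2$ is close to what the paper does (Cases~1 and~2 there): one exhibits an explicit induced $P_6$ or $Z_2$ straddling $u_0$ (and $v_0$), reads off heavy vertices via $f$-heaviness, and obtains a forbidden pair in $\widetilde{E}(G)$, contradicting a prior claim. One caution: from claw-$o$-heaviness you only get a \emph{pair} with degree sum $\ge n$, not a single vertex of degree $\ge n/2$; the paper works systematically with $\widetilde{E}(G)$ and $o$-cycles (its Lemma~1) rather than with individual heavy vertices, and this distinction is used repeatedly.

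The genuine gap is in the $W$ and $N$ cases. Your plan---exhibit one induced copy of $S$, extract heavy vertices, then run a crossing/rerouting argument---is not how the paper proceeds and, as stated, is unlikely to close. The difficulty is that a single induced $W$ or $N$ near an attachment does not pin down \emph{which} cycle-vertices are heavy, nor does it by itself yield the two well-placed ``good'' pairs needed for an insertion. The paper instead imports the machinery of Li, Ryj\'a\v{c}ek, Wang and Zhang: it shows $u_{-1}u_1\in E(G)$ (or symmetrically), starts from the triangle $u_{-1}u_0u_1$, and proves an \emph{iterative extension} claim (Claim~8): if the induced segment $G[u_{-k},u_l]$ is $(u_{-k},u_0,u_l)$-composed and contains no heavy pair and no heavy triangle, then the composed structure extends by one step. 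The proof of Claim~8 is the real work; it shows the auxiliary graph $G''$ is $\{K_{1,3},S\}$-\emph{free} (not merely $f$-heavy), partitions it into distance classes $N_i$ from $u_{-k-1}$, and proves each $N_i$ is a clique, forcing the required extension edge. A separate bound (Claim~7) caps how far the composed block can grow, so the iteration must terminate with a heavy pair or heavy triangle; this yields that $(u_{-k-1},u_l)$, $(u_{-k},u_{l+1})$ or $(u_{-k-1},u_{l+1})$ is $u_0$-good, and symmetrically a $v_0$-good pair. Lemma~5 then produces a cycle through $V(C)\cup V(P)$, the contradiction.

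None of this composed-graph/``$x$-good'' apparatus appears in your plan, and I do not see how a one-shot ``find an induced $W$ or $N$ plus Bondy--Woodall crossing'' substitutes for it: the $f$-heavy hypothesis on $W$ or $N$ gives you heavy vertices only among specific distance-two pairs inside the copy, and without the iterative absorption you cannot force those heavy vertices into positions that permit rerouting. Also, your final remark about deducing $P_6$ and $Z_2$ from the $W,N$ analysis goes against the grain: the $f$-heavy conditions are not nested that way, and in the paper $P_6$ and $Z_2$ are disposed of first and independently.
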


As a corollary of Theorems \ref{th1} and \ref{th13}, we can get the following theorem, which gives a full answer to Problem \ref{subpr2.1}.

\begin{theorem}\label{th14}
Let $G$ be a 2-connected graph and $S$ be a connected graph other than $P_3$. If $G$ is claw-$o$-heavy, then $G$ being $S$-$f$-heavy implies $G$ is Hamiltonian if and only if $S=P_4,P_5,P_6,Z_1,Z_2,B,N$ or $W$.
\end{theorem}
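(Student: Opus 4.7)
The plan is to deduce Theorem \ref{th14} as a direct corollary of Bedrossian's Theorem \ref{th1}, Theorem \ref{th13}, Remark \ref{re1}, and the elementary inheritance relations among \emph{f}-heavy conditions that are already collected in the excerpt just before Theorem \ref{th12}: every $P_i$-\emph{f}-heavy ($i=4,5$) graph is $P_6$-\emph{f}-heavy, every $Z_1$-\emph{f}-heavy graph is $B$-\emph{f}-heavy (respectively $N$-\emph{f}-heavy), and every $B$-\emph{f}-heavy graph is $N$-\emph{f}-heavy. No new extremal argument is needed; the only work is to assemble these pieces and to use repeatedly the trivial fact that an $H$-free graph is both $H$-\emph{o}-heavy and $H$-\emph{f}-heavy.

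For the sufficiency, let $G$ be a 2-connected claw-\emph{o}-heavy graph that is also $S$-\emph{f}-heavy for some $S\in\{P_4,P_5,P_6,Z_1,Z_2,B,N,W\}$. When $S\in\{P_6,Z_2,N,W\}$, Theorem \ref{th13} gives Hamiltonicity directly. When $S\in\{P_4,P_5\}$, the $S$-\emph{f}-heavy hypothesis upgrades to $P_6$-\emph{f}-heavy by the inheritance relations above, and Theorem \ref{th13} applies with $S=P_6$. When $S\in\{Z_1,B\}$, the hypothesis upgrades to $N$-\emph{f}-heavy (through $B$ if necessary), and Theorem \ref{th13} applies with $S=N$.

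For the necessity, suppose that $S$ is a connected graph other than $P_3$ with $S\notin\{P_4,P_5,P_6,Z_1,Z_2,B,N,W\}$; we produce a 2-connected claw-\emph{o}-heavy and $S$-\emph{f}-heavy graph which is not Hamiltonian. If $S\neq C_3$, then by Bedrossian's Theorem \ref{th1} there exists a 2-connected $\{K_{1,3},S\}$-free graph $G$ that is not Hamiltonian; since being $H$-free trivially implies being $H$-\emph{o}-heavy and $H$-\emph{f}-heavy, this $G$ is simultaneously claw-\emph{o}-heavy and $S$-\emph{f}-heavy, as required. If $S=C_3$, then by Remark \ref{re1} the $C_3$-\emph{f}-heavy condition is vacuous, so any 2-connected claw-free non-Hamiltonian graph works; the Matthews--Sumner construction cited in Remark \ref{re1} provides such a graph, and it is automatically claw-\emph{o}-heavy (vacuously) and $C_3$-\emph{f}-heavy.

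The only genuinely nontrivial input is Theorem \ref{th13}, which is proved separately earlier in the paper. Modulo Theorem \ref{th13}, no obstacle remains: the sufficiency is a four-line case split using the inheritance relations, and the necessity is handled by a single appeal to Theorem \ref{th1} in the generic case together with a single appeal to the Matthews--Sumner example in the exceptional case $S=C_3$.
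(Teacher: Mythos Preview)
Your proposal is correct and follows essentially the same approach as the paper, which presents Theorem~\ref{th14} simply as a corollary of Theorems~\ref{th1} and~\ref{th13} (with the necessity direction already noted before Problem~\ref{subpr2.1} via Theorem~\ref{th1} and Remark~\ref{re1}, and the sufficiency obtained from Theorem~\ref{th13} together with the inheritance relations recorded before Theorem~\ref{th12}). Your write-up merely makes explicit the case splits that the paper leaves implicit.
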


For Problem \ref{subpr2.2}, we firstly notice that every claw-\emph{f}-heavy graph is also claw-\emph{o}-heavy. Secondly, it is known in \cite{Li_Ryjacek_Wang_Zhang} that there exists a 2-connected claw-free and $P_6$-\emph{o}-heavy graph which is not Hamiltonian. Thus the following result, which can be deduced from Theorem \ref{th4}, is an answer to Problem \ref{subpr2.2}.

\begin{corollary}\label{co1}
Let $G$ be a 2-connected graph and $S$ be a connected graph other than $P_3$. If $G$ is claw-$f$-heavy, then $G$ being $S$-$o$-heavy implies $G$ is Hamiltonian if and only if $S=P_4,P_5,C_3,Z_1,Z_2,B,N$ or $W$.
\end{corollary}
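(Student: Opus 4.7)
The plan is to prove Corollary \ref{co1} by handling the two directions of the biconditional separately, leaning on Theorems \ref{th1}, \ref{th3}, and \ref{th4} together with the $P_6$-counterexample cited in the paragraph just before the corollary.

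For the sufficiency direction, suppose $G$ is a 2-connected claw-$f$-heavy graph which is $S$-$o$-heavy for some $S \in \{P_4, P_5, C_3, Z_1, Z_2, B, N, W\}$. The key auxiliary observation is that every claw-$f$-heavy graph is claw-$o$-heavy. Indeed, given an induced claw with center $c$ and leaves $x, y, z$, each of the three pairs $\{x,y\}, \{y,z\}, \{x,z\}$ lies at distance $2$ within the claw, so the $f$-heavy hypothesis yields $\max\{d(u), d(v)\} \ge n/2$ for each such pair. A brief pigeonhole on a three-element set then forces at least two of $x, y, z$ to have degree at least $n/2$, producing two nonadjacent vertices with degree sum at least $n$. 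Hence $G$ is claw-$o$-heavy, and therefore $\{K_{1,3}, S\}$-$o$-heavy; Theorem \ref{th3} concludes that $G$ is Hamiltonian.

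For the necessity direction, it suffices to exhibit, for each connected graph $S \ne P_3$ outside the list, a 2-connected claw-$f$-heavy $S$-$o$-heavy non-Hamiltonian graph. Whenever $S \notin \{P_3, P_4, P_5, P_6, C_3, Z_1, Z_2, B, N, W\}$, Theorem \ref{th1} (Bedrossian) provides a 2-connected $\{K_{1,3}, S\}$-free non-Hamiltonian graph, and any such graph is a valid counterexample since claw-free implies claw-$f$-heavy and $S$-free implies $S$-$o$-heavy. The single remaining case $S = P_6$ is handled by the 2-connected claw-free $P_6$-$o$-heavy non-Hamiltonian graph of Li, Ryj\'a\v{c}ek, Wang and Zhang referenced in the paragraph preceding the corollary: being claw-free it is automatically claw-$f$-heavy, while $P_6$-$o$-heaviness is built into the construction.

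No step here is genuinely hard; the only content beyond citation is the implication claw-$f$-heavy $\Rightarrow$ claw-$o$-heavy, which is a small pigeonhole. Everything else amounts to organizing the previous results so that the eight graphs listed in Corollary \ref{co1} coincide precisely with those remaining after the $P_6$ counterexample removes $P_6$ from the Theorem \ref{th4} list.
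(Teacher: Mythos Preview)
Your proof is correct and follows essentially the same approach as the paper: both rely on the observation that claw-$f$-heavy implies claw-$o$-heavy, together with the $P_6$ counterexample from \cite{Li_Ryjacek_Wang_Zhang}. The only cosmetic difference is that the paper phrases the deduction as coming from Theorem~\ref{th4}, whereas you invoke Theorems~\ref{th1} and~\ref{th3} directly for necessity and sufficiency respectively; these are equivalent packagings of the same argument.
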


Similar to Problem \ref{pr2}, by Theorem \ref{th1}, Problem \ref{pr3} is equivalent to the following two problems.

\setcounter{problem}{3}
\setcounter{subproblem}{0}
\begin{subproblem}\label{subpr3.1}
Which connected graphs $S$ other than $P_3$ imply that every 2-connected claw-free and $S$-\emph{f}-heavy graph is Hamiltonian?
\end{subproblem}

\begin{subproblem}\label{subpr3.2}
Which connected graphs $S$ other than $P_3$ imply that every 2-connected claw-\emph{f}-heavy and $S$-free graph is Hamiltonian?
\end{subproblem}

For a given connected graph $H$, we notice that every $H$-free graph is also $H$-\emph{f}-heavy. Hence by Theorems \ref{th4}, \ref{th12} and Remark \ref{re1}, we have

\begin{corollary}\label{co2}
Let $G$ be a 2-connected graph and $S$ be a connected graph other than $P_3$. If $G$ is claw-free, then $G$ being $S$-$f$-heavy implies $G$ is Hamiltonian if and only if $S=P_4,P_5,P_6,Z_1,Z_2,B,N$ or $W$.
\end{corollary}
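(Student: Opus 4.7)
The plan is to derive Corollary \ref{co2} as an immediate consequence of Theorem \ref{th12}, Theorem \ref{th1}, and Remark \ref{re1}, using the two elementary observations that a claw-free graph is (vacuously) claw-$f$-heavy and that an $H$-free graph is $H$-$f$-heavy.

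For the sufficiency direction, I would assume $S\in\{P_4,P_5,P_6,Z_1,Z_2,B,N,W\}$ and let $G$ be a 2-connected, claw-free, $S$-$f$-heavy graph. Since $G$ contains no induced $K_{1,3}$ at all, the $f$-heaviness requirement on induced claws is satisfied vacuously; thus $G$ is $\{K_{1,3},S\}$-$f$-heavy, and Theorem \ref{th12} immediately delivers Hamiltonicity.

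For the necessity direction, I would argue by contrapositive: given a connected graph $S$ with $S\neq P_3$ and $S\notin\{P_4,P_5,P_6,Z_1,Z_2,B,N,W\}$, I need to exhibit a 2-connected, claw-free, $S$-$f$-heavy graph that is not Hamiltonian. If $S=C_3$, then Remark \ref{re1} tells us that every graph is $C_3$-$f$-heavy, and the claw-free non-Hamiltonian (even 3-connected) graph of Matthews and Sumner cited in that remark supplies the desired counterexample. Otherwise $S\neq C_3$, so the pair $\{K_{1,3},S\}$ lies outside Bedrossian's list in Theorem \ref{th1}; hence there exists a 2-connected $\{K_{1,3},S\}$-free non-Hamiltonian graph, and being $S$-free it is in particular $S$-$f$-heavy, so it serves as the required counterexample.

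There is no serious obstacle, only some bookkeeping between the two lists: Theorem \ref{th12}'s list is $\{P_4,P_5,P_6,Z_1,Z_2,B,N,W\}$ while Bedrossian's list additionally contains $C_3$, so the value $S=C_3$ must be singled out and disposed of using Remark \ref{re1} rather than Theorem \ref{th1}. Everything else reduces to the two trivial implications mentioned at the outset, so no further argument is required.
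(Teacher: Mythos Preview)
Your proposal is correct and follows essentially the same route as the paper, which simply states that Corollary~\ref{co2} follows from Theorems~\ref{th4}, \ref{th12} and Remark~\ref{re1} without spelling out the details. Your use of Theorem~\ref{th1} (Bedrossian) rather than Theorem~\ref{th4} for the necessity direction is in fact the more direct citation, since one needs the counterexample to be claw-\emph{free}, not merely claw-$o$-heavy; the paper's reference list is collective for both Corollaries~\ref{co2} and~\ref{co3}, and the necessity for Corollary~\ref{co2} ultimately traces back to Theorem~\ref{th1} anyway.
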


\begin{corollary}\label{co3}
Let $G$ be a 2-connected graph and $S$ be a connected graph other than $P_3$. If $G$ is claw-$f$-heavy, then $G$ being $S$-free implies $G$ is Hamiltonian if and only if $S=P_4,P_5,P_6,C_3,Z_1,Z_2,B,N$ or $W$.
\end{corollary}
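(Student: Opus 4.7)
The plan is to derive both directions as essentially immediate consequences of earlier results: Theorem \ref{th12} (the $\{R,S\}$-$f$-heavy characterization), Theorem \ref{th4} (the claw-$o$-heavy plus $S$-free characterization), and Theorem \ref{th1} (Bedrossian's original characterization). The bookkeeping concerns the special status of $S=C_3$, explained in Remark \ref{re1}, which routes that one case through Theorem \ref{th4} rather than Theorem \ref{th12}.

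For the sufficiency direction, let $G$ be 2-connected, claw-$f$-heavy, and $S$-free with $S$ in the listed family. If $S\in\{P_4,P_5,P_6,Z_1,Z_2,B,N,W\}$, then $S$-freeness trivially implies $S$-$f$-heaviness, so $G$ is $\{K_{1,3},S\}$-$f$-heavy and Theorem \ref{th12} yields Hamiltonicity at once. For $S=C_3$ the observation I would use is that claw-$f$-heaviness already implies claw-$o$-heaviness: in any induced $K_{1,3}$ with leaves $x_1,x_2,x_3$, every pair $\{x_i,x_j\}$ satisfies $d_{K_{1,3}}(x_i,x_j)=2$, so the hypothesis forces $\max\{d(x_i),d(x_j)\}\geq n/2$ for all three pairs. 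Consequently at least two of the leaves have degree $\geq n/2$; being distinct leaves of the claw they are non-adjacent in $G$, and their degree sum is at least $n$, so the claw is $o$-heavy. Applying Theorem \ref{th4} with $S=C_3$ then closes this case.

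For the necessity direction I would argue contrapositively: if $S$ does not lie in $\{P_4,P_5,P_6,C_3,Z_1,Z_2,B,N,W\}$, Theorem \ref{th1} supplies a 2-connected $\{K_{1,3},S\}$-free non-Hamiltonian graph, and since any claw-free graph is vacuously claw-$f$-heavy, the same graph refutes the implication for that $S$. No genuine obstacle arises in either direction; the only point worth emphasising is the vacuity of $C_3$-$f$-heaviness highlighted in Remark \ref{re1}, which is precisely why the $S=C_3$ case cannot be absorbed into Theorem \ref{th12} and must be handled separately through Theorem \ref{th4}.
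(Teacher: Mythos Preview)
Your proof is correct and follows essentially the same approach as the paper, which derives the corollary in one line from Theorems \ref{th4}, \ref{th12}, and Remark \ref{re1} together with the observation that $H$-free implies $H$-$f$-heavy (and, stated earlier, that claw-$f$-heavy implies claw-$o$-heavy). The only cosmetic difference is that for necessity you invoke Theorem \ref{th1} directly to obtain a $\{K_{1,3},S\}$-free non-Hamiltonian example, whereas the paper cites Theorems \ref{th4} and \ref{th12}; since the necessity directions of those theorems ultimately rest on exactly the same Bedrossian counterexamples, this amounts to the same argument.
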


Corollaries \ref{co2} and \ref{co3} answer Problems \ref{subpr3.1} and \ref{subpr3.2}, respectively.

Obviously, Theorem \ref{th13} extends Theorems \ref{th6}-\ref{th11}, and Theorem \ref{th14} extends Theorem \ref{th4}. Moreover, each of Theorems \ref{th12}, \ref{th14} and Corollaries \ref{co1}-\ref{co3} extends Theorem \ref{th1}.

In the next Section, we give some preliminaries. The proof of Theorem \ref{th13} is postponed to Section 3. In the last section, some remarks and one open problem are given.

\section{Preliminaries}

We begin this section with some additional terminology and notation.

Let $G$ be a graph, $H$ a subgraph of $G$ and $X$ a subset of $V(G)$. We use $G[X]$ to denote the subgraph of $G$ induced by $X$, and $G-H$ denotes the subgraph induced by $V(G)\setminus V(H)$. If $G'$ is a graph, then by $G[X]\cong G'$, we mean that $G[X]$ is isomorphic to the graph $G'$.

Throughout this paper, $k$ and $l$ will denote positive integers, and $s,t$ denote the integers which may be non-positive. For $s\leq t$, $[s,t]$ denotes the integer set $\{s,s+1,\ldots,t-1,t\}$ and $[u_s,u_t]$ denotes the set $\{u_s,u_{s+1},\ldots,u_{t-1},u_t\}$. If $[u_s,u_t]$ is a subset of the vertex set of a graph $G$, we use $G[u_s,u_t]$, instead of $G[[u_s,u_t]]$, to denote the subgraph induced by $[u_s,u_t]$ in $G$.

Let $P$ be a path and $u,v\in V(P)$. We use $P[u,v]$ to denote the subpath of $P$ from $u$ to $v$. Let $C$ be a cycle. We denote by $\overrightarrow{C}$ the cycle $C$ with a given orientation, and by $\overleftarrow{C}$ the same subgraph with the reverse orientation. For two vertices $u,v\in V(C)$, $\overrightarrow{C}[u,v]$ denotes the path from $u$ to $v$ on $\overrightarrow{C}$, and $\overleftarrow{C}[v,u]$ is the same path with the reverse direction. For a vertex $x\in \overrightarrow{C}$, we use $x^{+}$ to denote the successor of $x$ on $\overrightarrow{C}$, and $x^{-}$ denotes its predecessor. If $A\subseteq V(C)$, then set $A^{+}=\{x^{+}:x\in A\}$ and $A^{-}=\{x^{-}:x\in A\}$.

Let $G$ be a graph on $n$ vertices and $v$ be a vertex of $V(G)$. The vertex $v$ is called \emph{heavy} if its degree is at least $n/2$; otherwise we call it a \emph{light} vertex. A pair of nonadjacent vertices with degree sum at least $n$ is called a \emph{heavy pair} and a triangle such that every vertex in it is heavy is called a \emph{heavy triangle}. A cycle $C$ of $G$ is called \emph{heavy} if it contains all heavy vertices of $G$; it is called \emph{nonextendable} if there is not a longer cycle in $G$ which contains all the vertices of $C$.

In this paper, we need some concepts firstly introduced by Li et al. in \cite{Li_Ryjacek_Wang_Zhang}. To ensure the integrity of our paper, we rewrite them here.

Let $G$ be a graph and $C=x_1x_2,\ldots,x_t$ be a sequence of vertices in $V(G)$, where $t\geq 3$ be an integer. We denote $\widetilde{E}(G)=\{xy:xy\in E(G)$ or $d(x)+d(y)\geq n, x,y\in V(G)\}$, and say that $C$ is an \emph{Ore-cycle}, or in short, \emph{o-cycle}, if the vertices in $V(C)$ satisfy $x_ix_{i+1}\in \widetilde{E}(G)$, $i\in [1,t]$, where $x_1=x_{t+1}$.

Let $G$ be a graph and $\{x_1,x_2\}$, $\{y_1,y_2\}$ be two pairs of vertices in $V(G)$ with $x_1\neq x_2$ and $y_1\neq y_2$. We say that $D$ is an \emph{$(x_1x_2,y_1y_2)$-pair} if $D$ consists of two vertex-disjoint paths $P_1$ and $P_2$ such that
\begin{mathitem}
        \item the origin of $P_i$ is in $\{x_1,x_2\}$, and
        \item the terminus of $P_i$ is in $\{y_1,y_2\}$
\end{mathitem}
for $i=1,2$.

Let $G$ be a graph on $n\geq 2$ vertices and $x,y\in V(G)$ be two distinct vertices. Let $G'$ be a graph obtained by adding a (new) vertex $z$ to $G$ with two edges $zx$ and $zw$, where $w\neq x$ is an arbitrary vertex of $G$. Let $G''$ be a graph obtained by adding two (new) distinct vertices $x'$ and $y'$ to $G$ and three edges $xx',yy'$ and $x'y'$. We call $G'$ a \emph{1-extension of $G$ from $x$ to $z$}, and $G''$ a \emph{2-extension of $G$ from $(x,y)$ to $(x',y')$}.

Let $G$ be a graph and $x,y,z$ be three distinct vertices of $V(G)$. $G$ is called \emph{$(x,y,z)$-composed} if there exists a sequence of vertices $v_{-k},\ldots,v_0,\ldots,v_l$ ($k,l\geq 1$) and a sequence of graphs $D_1,D_2,\ldots,D_r$ ($r\geq 1$) such that
\begin{mathitem}
    \item[(1)]  $x=v_{-k}$, $y=v_0$ and $z=v_l$,
    \item[(2)]  $D_1$ is a triangle such that $V(D_1)=\{v_{-1},v_0,v_1\}$,
    \item[(3)]  $V(D_i)=[v_{-x_{i}},v_{y_i}]$ for some $x_i,y_i$, where $1\leq x_i\leq k$ and $1\leq y_i\leq l$, and $D_{i+1}$ satisfies one of the following conditions for $i\in [1,r-1]$:
        \begin{mathitem}
        \item $D_{i+1}$ is a 1-extension of $D_i$ from $v_{-x_{i}}$ to $v_{-x_{i}-1}$ or from $v_{y_{i}}$ to $v_{y_{i}+1}$,
        \item $D_{i+1}$ is a 2-extension of $D_i$ from $(v_{-x_{i}},v_{y_{i}})$ to $(v_{-x_{i}-1},v_{y_{i}+1})$,
        \end{mathitem}
    \item[(4)]  $D=D_r$ satisfies $V(D)=V(G)$.
\end{mathitem}
Without loss of generality, we call the sequence of vertices $v_{-k},\ldots,v_0,\ldots,v_l$ a \emph{canonical ordering} and the sequence of graphs $D_1,D_2,\ldots,D_r$ a \emph{canonical sequence of $D$}. We call the graph $D$ the \emph{carrier of $G$}.

Let $G$ be a graph, $C$ a cycle of $G$ and $x_1,x,x_2$ three distinct vertices on $C$. Let $P$ be the $(x_1,x_2)$-path on $C$ such that $x\in V(P)\setminus\{x_1,x_2\}$. The pair of vertices $(x_1,x_2)$ is said to be \emph{$x$-good on $C$}, if for some integer $i\in \{1,2\}$, there exits a vertex $x'\in V(P)\setminus \{x_i\}$ such that
\begin{mathitem}
\item there is an ($x,x_{3-i}$)-path $P'$ such that $V(P')=V(P)\setminus \{x_i\}$,
\item there is an ($xx_{3-i},x'x_i$)-pair $D$ such that $V(D)=V(P)$,
\item the degree sum of $x_i$ and $x'$ is at least $n$.
\end{mathitem}

Next, we list several known results needed in our proof.

\begin{lemma}[Li, Ryj\'a\v{c}ek, Wang and Zhang \cite{Li_Ryjacek_Wang_Zhang}]\label{le1}
Let $G$ be a graph and $C'$ be an $o$-cycle of $G$. Then there is a cycle $C$ of $G$ such that $V(C')\subseteq V(C)$.
\end{lemma}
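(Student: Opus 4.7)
I would prove the lemma by induction on the number $k$ of \emph{virtual arcs} of $C'$, where an arc $x_i x_{i+1}$ is called virtual when $x_i x_{i+1} \notin E(G)$ but $d(x_i) + d(x_{i+1}) \geq n$. The base case $k = 0$ is immediate: every consecutive pair is then an actual edge of $G$, so $C'$ itself is the desired cycle.

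For the inductive step, suppose $k \geq 1$ and, without loss of generality, that $x_1 x_2$ is virtual. I would form $G^+ := G + x_1 x_2$; since adding a single edge cannot decrease any vertex degree, every arc of $C'$ already in $\widetilde{E}(G)$ remains in $\widetilde{E}(G^+)$, while the arc $x_1 x_2$ is now a real edge. Thus $C'$ is an o-cycle of $G^+$ with at most $k - 1$ virtual arcs, and the inductive hypothesis yields a cycle $C^*$ of $G^+$ with $V(C') \subseteq V(C^*)$. If $x_1 x_2 \notin E(C^*)$, then $C^*$ is already a cycle in $G$. Otherwise $P := C^* - x_1 x_2 = u_1 u_2 \cdots u_p$ is a path in $G$ with $u_1 = x_1$, $u_p = x_2$, and $V(C') \subseteq V(P)$.

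The final step is a Bondy--Chv\'atal-style rerouting. If some index $i \in \{1, \ldots, p-1\}$ satisfies both $x_1 u_{i+1} \in E(G)$ and $x_2 u_i \in E(G)$, then $x_1 u_{i+1} u_{i+2} \cdots u_{p-1} x_2 u_i u_{i-1} \cdots u_2 x_1$ is a cycle in $G$ covering $V(P) \supseteq V(C')$. The main obstacle is that the classical Bondy--Chv\'atal counting requires $P$ to be a Hamilton path of $G$, whereas here $V(P)$ may be a proper subset of $V(G)$ and neighbors of $x_1$ or $x_2$ can escape $V(P)$. I expect to overcome this by exploiting the vertices outside $V(P)$. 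Write $T := V(G) \setminus V(P)$. If no suitable index $i$ exists, the sets $A^- = \{i : u_{i+1} \in N(x_1)\}$ and $B = \{i : u_i \in N(x_2)\}$ are disjoint subsets of $\{1, \ldots, p-1\}$, which forces $|N(x_1) \cap V(P)| + |N(x_2) \cap V(P)| \leq |V(P)| - 1$. Combined with $d(x_1) + d(x_2) \geq n$, this yields $|N(x_1) \cap T| + |N(x_2) \cap T| \geq |T| + 1$, and pigeonhole then supplies a common neighbor $y \in T$ of $x_1$ and $x_2$. The cycle $x_1 u_2 u_3 \cdots u_{p-1} x_2 y x_1$ thus lies in $G$ and covers $V(P) \cup \{y\} \supseteq V(C')$, completing the induction.
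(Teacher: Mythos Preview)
Your argument is correct. The induction on the number of virtual arcs, passing to $G^+=G+x_1x_2$, and then handling the single added edge via the Bondy--Chv\'atal crossover/common-neighbor dichotomy all go through cleanly; the key counting $|N_G(x_1)\cap V(P)|+|N_G(x_2)\cap V(P)|\le |V(P)|-1$ together with $d_G(x_1)+d_G(x_2)\ge n$ indeed forces a common neighbor in $T=V(G)\setminus V(P)$ when no crossover exists.

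As for comparison: the present paper does not give its own proof of this lemma---it is quoted as a known result from \cite{Li_Ryjacek_Wang_Zhang} and used as a black box. Your proof is precisely the standard Bondy--Chv\'atal closure argument (one edge at a time), which is the natural and expected route; there is nothing to contrast.
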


\begin{lemma}[Li and Zhang \cite{Li_Zhang}]\label{le2}
Let $G$ be a 2-connected $K_{1,4}$-$o$-heavy graph and $C$ be a longest cycle of $G$. Then $C$ is a heavy cycle of $G$.
\end{lemma}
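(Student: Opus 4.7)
The plan is to argue by contradiction. Suppose $C$ is a longest cycle of $G$ but some heavy vertex $w$ lies outside $V(C)$. Let $H$ be the component of $G-V(C)$ containing $w$, and write $A=N_G(V(H))\cap V(C)=\{u_1,u_2,\ldots,u_s\}$ listed in the cyclic order along $\overrightarrow{C}$; the 2-connectivity of $G$ forces $s\geq 2$.

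First I would establish the familiar ``insertion'' constraints implied by the maximality of $|C|$. For any $i\neq j$ the edge $u_i^+u_j^+$ cannot belong to $E(G)$: if it did, choosing a $(u_i,u_j)$-path $P$ through $H$ (which exists since $H$ is connected and both $u_i,u_j$ have neighbors in $H$) and rerouting as $\overrightarrow{C}[u_j^+,u_i]\cdot P\cdot\overleftarrow{C}[u_j,u_i^+]\cdot u_i^+u_j^+$ would yield a cycle of length $|C|-1+|P|\geq |C|+1$. An analogous argument shows that no $u_i^+$ has a neighbor in $V(H)$, and, by reversing the orientation, the same conclusions hold for the predecessors $u_i^-$.

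Next I would locate an induced $K_{1,4}$ tailored to this configuration. Pick $u_i\in A$ and a neighbor $h_i\in V(H)$ of $u_i$, and consider the star centered at $u_i$ with leaves $u_i^-$, $u_i^+$, $h_i$, together with a fourth leaf chosen from one of three candidate pools: a second $u_j^+$ that happens to be a neighbor of $u_i$, a second $H$-neighbor of $u_i$, or a further $C$-neighbor of $u_i$. The insertion constraints above rule out the potentially bad edges $u_i^+u_j^+$ and $u_i^+h$, while the inequality $d(w)\geq n/2$ provides enough combinatorial room (inside $V(H)\cup A$) to guarantee that some choice of fourth leaf indeed produces an \emph{induced} $K_{1,4}$. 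Applying the $K_{1,4}$-$o$-heavy hypothesis to this induced copy yields two non-adjacent vertices $x,y$ in it with $d(x)+d(y)\geq n$; in the notation of the paper, this is a new edge $xy\in \widetilde{E}(G)$.

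Finally I would use this $\widetilde{E}$-edge to build an $o$-cycle $C^{\ast}$ with $V(C)\cup\{w\}\subseteq V(C^{\ast})$, essentially by inserting the edge $xy$ into the detour through $H$ that was constructed in Step 1. Lemma \ref{le1} then upgrades $C^{\ast}$ to a genuine cycle $\widehat{C}$ of $G$ containing $V(C)\cup\{w\}$, hence $|\widehat{C}|>|C|$, contradicting the choice of $C$. I expect the main obstacle to be Step 2: one must choose the fourth leaf so that all six non-edges of the $K_{1,4}$ really are missing from $G$, and this demands a careful case distinction on the size of $A$ and on whether $w$ has any neighbor on $C$ at all. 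The sub-case $N_G(w)\subseteq V(H)$ looks most delicate, since one must first walk from $w$ to $A$ through several internal vertices of $H$ before the insertion lemmas and the $K_{1,4}$ search become available.
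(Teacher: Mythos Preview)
First, the paper does not prove Lemma~\ref{le2}; it is quoted without argument from \cite{Li_Zhang}, so there is no in-paper proof against which to compare your sketch.

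On its own merits, your outline has a real gap at the junction of Steps~2 and~3. In Step~2 you center the prospective $K_{1,4}$ at an attachment vertex $u_i$ with leaves $u_i^-,u_i^+,h_i$ plus a fourth leaf, but nothing rules out $u_i^-u_i^+\in E(G)$, which already spoils the induced star on three leaves. More seriously, the claimed role of $d(w)\geq n/2$ is never cashed out: $w$ may lie deep inside $H$ with no neighbour on $C$, and its high degree imposes no constraint on the local configuration around $u_i$, so it is unclear what ``combinatorial room'' it buys there. Even if an induced $K_{1,4}$ is produced, Step~3 does not follow for every possible heavy pair. If, say, the pair turns out to be $(u_i^-,u_i^+)$, the single $\widetilde{E}$-edge $u_i^-u_i^+$ does not let you reroute $C$ through $H$ and close up again; one would also need a compatible shortcut near a second attachment vertex $u_j$, and nothing in the argument produces or coordinates two such shortcuts. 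Similar obstructions arise when the heavy pair is $(h_i,u_j^+)$ or $(u_i^-,u_j^+)$: in neither case does the resulting $\widetilde{E}$-edge yield an $o$-cycle containing all of $V(C)$ together with a vertex of $H$.

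As a sanity check, note that when $H=\{w\}$ the elementary count $2d(w)\leq |C|\leq n-1$ already contradicts $d(w)\geq n/2$ without any appeal to the $K_{1,4}$-$o$-heavy hypothesis; that hypothesis is genuinely needed only once $w$ has neighbours inside $H$, and in that regime the high degree of $w$ should be exploited directly (for instance by centering a star at $w$ or at one of its neighbours) rather than at an arbitrary attachment vertex whose local structure is unrelated to $d(w)$.
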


\begin{lemma}[Chv\'{a}tal and Erd\"{o}s \cite{Chvatal_Erdos}, Bondy \cite{Bondy}]\label{le3}
Let $G$ be a graph on $n$ vertices, $\overrightarrow{C}$ a nonextendable cycle in $G$, $H$ a component of $G-V(C)$, and $A$ the set of neighbours of $H$ on $C$. Then
\begin{mathitem}
    \item $A\cap A^{-}=\emptyset$, $A\cap A^{+}=\emptyset$, and $A^{-}$ and $A^{+}$ are independent sets,
    \item Each pair of vertices from $A^{-}$ or $A^{+}$ has degree sum smaller than $n$.
\end{mathitem}
\end{lemma}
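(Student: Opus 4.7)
The plan is to prove both parts by the same underlying idea: if any of the claimed obstructions failed, we could splice a piece of the component $H$ into $C$ to produce a longer cycle that still contains every vertex of $C$, contradicting the nonextendability of $C$. So the central routine is, for each forbidden configuration, to exhibit an explicit longer cycle.

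For part (i), I would first dispose of $A\cap A^+=\emptyset$ (and symmetrically $A\cap A^-=\emptyset$). If some $w\in A$ satisfies $w^-\in A$ as well, pick $h_1\in V(H)\cap N(w^-)$ and $h_2\in V(H)\cap N(w)$; since $H$ is connected there is an $(h_1,h_2)$-path $P_H$ in $H$. Replacing the single edge $w^-w$ of $\overrightarrow C$ by $w^-h_1\cdot P_H\cdot h_2w$ yields a cycle strictly longer than $C$ that still contains $V(C)$, contradicting nonextendability. To show $A^-$ is independent (and analogously $A^+$), suppose $w_i^-w_j^-\in E(G)$ with $w_i,w_j\in A$ distinct; choose $h_i\in N_H(w_i)$, $h_j\in N_H(w_j)$, and an $(h_i,h_j)$-path $P_H$ in $H$. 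Using the chord $w_i^-w_j^-$ to reverse one arc of $C$ and the $H$-path to splice the two attachments, the cycle
\[
w_i^-\,w_j^-\,\overleftarrow C[w_j^-,w_i]\,h_i\,P_H\,h_j\,\overrightarrow C[w_j,w_i^-]
\]
again contains $V(C)$ and strictly more vertices, the required contradiction.

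For part (ii), assume toward contradiction that $w_i^-,w_j^-\in A^-$ satisfy $d(w_i^-)+d(w_j^-)\ge n$. Form a $(w_i^-,w_j^-)$-path
\[
Q\;=\;\overleftarrow C[w_i^-,w_j]\,\cdot\,P_H\,\cdot\,\overrightarrow C[w_i,w_j^-],
\]
where $P_H$ is a $(w_j,w_i)$-path through $H$. Write $Q=u_0u_1\cdots u_L$ with $u_0=w_i^-$, $u_L=w_j^-$, and $L=|V(C)|+t-1$ where $t\ge1$ counts the internal vertices of $P_H$. Set $I=\{k\in[1,L]:u_0u_k\in E\}$ and $J=\{k\in[1,L]:u_Lu_{k-1}\in E\}$; if $k\in I\cap J$ then $u_0u_k\cdot\overrightarrow Q[u_k,u_L]\cdot u_Lu_{k-1}\cdot\overleftarrow Q[u_{k-1},u_0]$ is a cycle on $L+1>|V(C)|$ vertices containing $V(C)$, contradicting nonextendability. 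The proof reduces to showing $|I|+|J|>L$.

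The main obstacle is precisely this counting step, since neighbors of $u_0,u_L$ lying outside $V(Q)$ do not contribute to $|I|,|J|$. The key observation that makes the bound work is obtained by applying part~(i): because $w_i,w_j\in A$ while $w_i^-,w_j^-\in A^-$, part~(i) gives $w_i^-,w_j^-\notin A$, so neither $u_0$ nor $u_L$ has any neighbor in $V(H)$. Consequently every neighbor of $u_0$ or $u_L$ outside $V(Q)$ lies in a component $H'$ of $G-V(C)$ different from $H$; for each such $H'$ one applies part~(i) to $H'$ to show $u_0,u_L$ cannot be simultaneously attached to $H'$ in a way that would otherwise permit an extension symmetric to the one used for $H$. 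Combining this with $d(u_0)+d(u_L)\ge n$ and $V(Q)\supseteq V(C)$ yields $d_Q(u_0)+d_Q(u_L)\ge L+1$, which by pigeonhole forces $I\cap J\neq\emptyset$ and completes the contradiction.
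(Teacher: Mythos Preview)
The paper does not prove this lemma; it is quoted as a known result of Chv\'atal--Erd\H{o}s and Bondy, so there is no paper proof to compare against. I will therefore assess your argument on its own.

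Your treatment of part~(i) is the standard one and is correct.

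For part~(ii) the path--crossing strategy on $Q$ is the right plan, but the step controlling neighbours of $u_0=w_i^-$ and $u_L=w_j^-$ outside $V(Q)$ is not properly justified. You say that ``one applies part~(i) to $H'$'' to forbid $u_0$ and $u_L$ from both being attached to a component $H'\ne H$. Part~(i) applied to $H'$ only tells you that $(A')^\pm$ are independent and disjoint from $A'$; it does \emph{not} prevent two specified vertices of $C$ from both lying in $A'$. The correct argument is direct and is what your phrase ``an extension symmetric to the one used for $H$'' actually hints at: if $u_0$ and $u_L$ each have a neighbour in $H'$, take a $(u_0,u_L)$-path through $H'$ and concatenate it with $Q$ to get a cycle containing $V(C)$ together with vertices of $H$ and $H'$, contradicting nonextendability.

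Even granting that no component of $G-V(C)$ sees both $u_0$ and $u_L$, the inequality $d_Q(u_0)+d_Q(u_L)\ge L+1$ does not follow by assertion; you still need the count. Since neither $u_0$ nor $u_L$ has a neighbour in $H$, the components of $G-V(C)$ containing a neighbour of $u_0$ and those containing a neighbour of $u_L$ are pairwise disjoint and all avoid $H$, so
\[
d_{G-V(C)}(u_0)+d_{G-V(C)}(u_L)\ \le\ \bigl(n-|V(C)|\bigr)-|V(H)|\ \le\ n-|V(C)|-t\ =\ n-(L+1).
\]
Combined with $d(u_0)+d(u_L)\ge n$ and $V(C)\subseteq V(Q)$, this gives $|I|+|J|=d_Q(u_0)+d_Q(u_L)\ge L+1$, forcing $I\cap J\ne\emptyset$ as you want. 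Your final sentence claims this bound without supplying the argument; fill it in and the proof is complete.
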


\begin{lemma}[Li, Ryj\'a\v{c}ek, Wang and Zhang \cite{Li_Ryjacek_Wang_Zhang}]\label{le4}
Let $G$ be a composed graph and let $D$ and $v_{-k},\ldots,v_0,\ldots,v_l$ be a carrier and a canonical ordering of $G$. Then
\begin{mathitem}
    \item $D$ has a Hamilton $(v_0,v_{-k})$-path,
    \item for every $v_s\in V(G)\setminus\{v_{-k}\}$, $D$ has a spanning
      $(v_0v_l,v_sv_{-k})$-pair.
\end{mathitem}
\end{lemma}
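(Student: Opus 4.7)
The natural proof is by induction on $r$, the length of the canonical sequence $D_1, D_2, \ldots, D_r = D$. The base case $r = 1$ is the triangle $D_1$ on $\{v_{-1}, v_0, v_1\}$, where both conclusions are verified by inspection (in (ii) with $v_s = v_0$ or $v_s = v_l = v_1$, one of the two paths is taken to be a trivial one-vertex path).

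For the induction it is convenient to strengthen the statement and maintain, at each stage $i$, the following four properties of $D_i$: (A) $D_i$ has a Hamilton $(v_0, v_{-x_i})$-path; (B) $D_i$ has a Hamilton $(v_0, v_{y_i})$-path; (C) for every $u \in V(D_i) \setminus \{v_{-x_i}\}$, $D_i$ has a spanning $(v_0 v_{y_i}, u v_{-x_i})$-pair; and (D) for every $u \in V(D_i) \setminus \{v_{y_i}\}$, $D_i$ has a spanning $(v_0 v_{-x_i}, u v_{y_i})$-pair. The conclusions (i) and (ii) of the lemma are then exactly (A) and (C) applied to $D_r = D$.

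The inductive step splits into three cases according to how $D_{i+1}$ is obtained from $D_i$: a left 1-extension adding $v_{-x_i-1}$ adjacent to $v_{-x_i}$ and some $w \in V(D_i) \setminus \{v_{-x_i}\}$; a right 1-extension, which is symmetric; or a 2-extension adding both $v_{-x_i-1}$ and $v_{y_i+1}$ together with the edges $v_{-x_i} v_{-x_i-1}$, $v_{-x_i-1} v_{y_i+1}$, $v_{y_i+1} v_{y_i}$. By the left--right symmetry of (A)--(D) it suffices to treat the first and third cases. In each case one routes the new vertex or vertices through the structure provided by the inductive hypothesis. For instance, (A) on $D_{i+1}$ in a left 1-extension is obtained by appending the edge $v_{-x_i} v_{-x_i-1}$ to the Hamilton path of (A) on $D_i$; (B) on $D_{i+1}$ must have $v_{-x_i-1}$ internal between its two neighbours $v_{-x_i}$ and $w$, and is obtained by concatenating the two paths of (C) on $D_i$ at $u = w$ (either endpoint-pairing of the pair is easily seen to work), with the corner cases $w = v_0$ and $w = v_{y_i}$ handled by the degenerate pairs in (C) at $u = v_0$ or $u = v_{y_i}$. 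The clauses (C) and (D) on $D_{i+1}$ are obtained analogously: one distinguishes whether the new vertex appears as an endpoint of one of the two paths of the desired spanning pair (in which case one simply extends a path of the inductive pair by the appropriate new edge) or as an internal vertex between its two neighbours (in which case one again concatenates two paths), and invokes the appropriate clause of (A)--(D) on $D_i$. The 2-extension is handled in the same spirit, with the extra edge $v_{-x_i-1} v_{y_i+1}$ providing a convenient shortcut that allows one to extend both ends of a single path in a spanning pair of $D_i$ in one stroke.

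The main obstacle is the sheer bulk of the resulting case analysis: for each of the four invariants, for each of the two representative extension types, one must branch on where the extra parameter lies (the vertex $w$ of a 1-extension, or the designated vertex $u$ in (C) and (D)) and occasionally accept spanning pairs in which one of the two paths is a single vertex. None of the individual subcases is conceptually difficult once the right clause of the induction hypothesis is chosen, so the proof ultimately reduces to careful, if lengthy, bookkeeping.
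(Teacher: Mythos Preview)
The paper does not give its own proof of this lemma: it is quoted verbatim from \cite{Li_Ryjacek_Wang_Zhang} and used as a black box in the proof of Theorem~\ref{th13}. So there is no in-paper proof to compare against.

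That said, your inductive strategy is the natural one and is sound. Strengthening the induction hypothesis to the four symmetric invariants (A)--(D) is exactly what is needed, since a left $1$-extension destroys direct access to the old left endpoint $v_{-x_i}$ and you must be able to rebuild a Hamilton $(v_0,v_{y_i})$-path in $D_{i+1}$ by threading the new vertex between its two neighbours; your use of (C) at $u=w$ for that purpose is correct in all corner cases ($w=v_0$, $w=v_{y_i}$) once trivial one-vertex paths are admitted in a pair. The one subcase worth singling out explicitly is (C) on $D_{i+1}$ at $u=v_{-x_i}$ in a $2$-extension: there the short path $v_{y_i+1}v_{-x_i-1}$ together with the Hamilton $(v_0,v_{-x_i})$-path from (A) in $D_i$ gives the required spanning $(v_0v_{y_i+1},v_{-x_i}v_{-x_i-1})$-pair, which is the only place the ``extra'' edge $v_{-x_i-1}v_{y_i+1}$ is genuinely needed. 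With that subcase pinned down, the rest is indeed routine bookkeeping as you describe.
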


\begin{lemma}[Li, Ryj\'a\v{c}ek, Wang and Zhang \cite{Li_Ryjacek_Wang_Zhang}]\label{le5}
Let $G$ be a graph, and $C$ be a cycle of $G$ with a given orientation. Let $P$ be an $(x,y)$-path of $G$ which is internally disjoint from $C$, where $x,y\in V(C)$. If there are vertices $x_1,x_2,y_1,y_2\in V(C)\setminus\{x,y\}$ such that
\begin{mathitem}
\item $x_2,x,x_1,y_1,y,y_2$ appear in the order along $\overrightarrow{C}$ (maybe $x_1=y_1$ or $x_2=y_2$);
\item $(x_1,x_2)$ is $x$-good on $C$; and
\item $(y_1,y_2)$ is $y$-good on $C$,
\end{mathitem}
then there is a cycle in $G$ which contains all the vertices in  $V(C)\cup V(P)$.
\end{lemma}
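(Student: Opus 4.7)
The strategy is to assemble an Ore-cycle $C^*$ on $V(C)\cup V(P)$ out of the structural data provided by the two goodness hypotheses, and then invoke Lemma~\ref{le1} to convert $C^*$ into an honest cycle of $G$ with the same vertex set. For notation, let $P_x=\overrightarrow{C}[x_2,x_1]$ be the arc of $C$ through $x$, $P_y=\overrightarrow{C}[y_1,y_2]$ the arc through $y$, and $M_1=\overrightarrow{C}[x_1,y_1]$, $M_2=\overrightarrow{C}[y_2,x_2]$ the two remaining arcs. Unpacking the $x$-good hypothesis yields, for some $i_x\in\{1,2\}$, a Hamilton $(x,x_{3-i_x})$-path $P'_x$ of $G[V(P_x)\setminus\{x_{i_x}\}]$, a spanning $(xx_{3-i_x},x'x_{i_x})$-pair $D_x$ of $G[V(P_x)]$ with $x'\in V(P_x)\setminus\{x_{i_x}\}$, and an o-edge $x'x_{i_x}$; symmetric data $P'_y,D_y,y',i_y$ come from $y$-goodness.

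The key preparatory step is to chain the two vertex-disjoint paths comprising $D_x$ through the o-edge $x'x_{i_x}$, producing an Ore-path $\pi_x$ from $x$ to $x_{3-i_x}$ with $V(\pi_x)=V(P_x)$, along which $x_{i_x}$ and $x'$ appear as consecutive \emph{interior} vertices joined by the o-edge; analogously one obtains $\pi_y$. After reversing the orientation of $\overrightarrow{C}$ if necessary, we may assume $i_x=1$, leaving two essentially different subcases $i_y=1$ and $i_y=2$, each of which is handled by writing down $C^*$ explicitly. In broad strokes, $C^*$ walks from $x$ along $P$ to $y$, along (a portion of) $\pi_y$, through one of the two middle arcs, then along (a portion of) $\pi_x$ back to $x$. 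The \emph{other} middle arc is then inserted into the cyclic sequence by splicing it into $\pi_x$ or $\pi_y$ at the internal o-edges $x'x_{i_x}$ and $y'y_{i_y}$: the detour enters the missed middle arc at $x_{i_x}$ (resp.\ $y_{i_y}$) and rejoins the main walk across the other o-edge, using the two halves of $D_x$ and $D_y$ as separate subpaths rather than as a single chained Ore-path.

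One then verifies in each subcase that the sequence visits every vertex of $V(C)\cup V(P)$ exactly once and that every consecutive pair of vertices lies in $\widetilde{E}(G)$, so that $C^*$ is indeed an Ore-cycle; applying Lemma~\ref{le1} yields the required cycle of $G$. The main technical obstacle is the bookkeeping: for each admissible $(i_x,i_y)$ the two halves of $D_x$ and $D_y$ must be routed so that \emph{both} middle arcs $M_1$ and $M_2$ are absorbed into the same simple cyclic sequence without any vertex being visited twice. The asymmetric case $i_x\ne i_y$ is the most delicate, because the natural splicing through one o-edge clashes with the splicing through the other; one resolves this by using the single-path data $P'_x$ or $P'_y$ on one side (which avoids the endpoint $x_{i_x}$ or $y_{i_y}$ altogether) while still using the chained o-path on the other side.
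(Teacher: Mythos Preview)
This lemma is not proved in the present paper; it is quoted from \cite{Li_Ryjacek_Wang_Zhang}. So there is no in-paper argument to compare against, and I can only assess your sketch on its own terms. The overall strategy --- assemble an Ore-cycle on $V(C)\cup V(P)$ from the data supplied by the two goodness hypotheses and then invoke Lemma~\ref{le1} --- is the natural one and presumably matches the source. The sketch, however, glosses over a genuine obstruction that the described splicing cannot resolve.

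Encode the available path-pieces as a multigraph on the ``junction'' vertices $\{x,x',x_1,x_2,y,y',y_1,y_2\}$: the two halves of $D_x$, the two halves of $D_y$, the middle arcs $M_1,M_2$, the path $P$, and the two given o-edges $x'x_{i_x}$, $y'y_{i_y}$. A simple o-cycle on $V(C)\cup V(P)$ built from these pieces corresponds to a $2$-regular spanning subgraph of this multigraph. But a direct count shows that, in \emph{every} choice of $(i_x,i_y)$, the junctions $x_{i_x}$ and $y_{i_y}$ have degree~$3$ while all others have degree~$2$. Replacing $D_x$ by the single path $P'_x$ (or $D_y$ by $P'_y$) only shifts the odd degree without eliminating it. Consequently no selection of these pieces alone yields a single simple o-cycle covering $V(C)\cup V(P)$; at best one obtains an Ore-\emph{path} between $x_{i_x}$ and $y_{i_y}$, or a $2$-factor with two components. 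In particular, your splice ``enter $M_1$ at $x_{i_x}$ and rejoin across the other o-edge'' leaves one half of $D_x$ (or $D_y$) stranded and fails to close up --- e.g.\ in the case $i_x=i_y=1$ with both pairs of the form $x\!\to\! x_1$, $x_2\!\to\! x'$ and $y\!\to\! y_1$, $y_2\!\to\! y'$, tracing your recipe ends at $y'$ with the $x\!\to\!x_1$ subpath still unvisited and no admissible continuation.

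What is missing is an extra ingredient you do not mention: from $d(x')+d(x_{i_x})\ge n$ and $d(y')+d(y_{i_y})\ge n$ one deduces that certain \emph{cross} pairs among $\{x',x_{i_x},y',y_{i_y}\}$ also lie in $\widetilde E(G)$ (for instance, at least one of $x_{i_x}y_{i_y}$ or $x'y'$ does). These additional o-edges are precisely what is needed to repair the parity and, after a further subcase split on the shape of $D_x,D_y$, to thread a single o-cycle through all of $V(C)\cup V(P)$. Your diagnosis that the asymmetric case $i_x\neq i_y$ is the delicate one is also off: the parity defect is present in all four cases, and when $i_x=i_y$ the two odd junctions sit on the \emph{same} middle arc, which is if anything the harder configuration.
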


\section{Proof of Theorem 13}

Suppose that $G$ is a non-Hamiltonian graph on $n$ vertices.  Let $C$ be a longest cycle of $G$ and $c$ be the length of $C$. Then $c<n$ and $G-C\neq \emptyset$. Since $G$ is 2-connected, there is a path of length at least 2, internally-disjoint with $C$, that connects two vertices of $C$. Let $P=w_0w_1\ldots w_rw_{r+1}$ be such a path with $r$ as small as possible, where $w_0=u_0\in V(C)$ and $w_{r+1}=v_0\in V(C)$. Assume that the length of $\overrightarrow{C}[u_0,v_0]$ is $r_1+1$ and the length of $\overrightarrow{C}[v_0,u_0]$ is $r_2+1$. Obviously, $r_1+r_2+2=c$. We denote the cycle $C$ with a given orientation by $\overrightarrow{C}=u_0u_1\ldots u_{r_1}v_0u_{-r_2}\ldots u_{-1}u_0$ or by $\overrightarrow{C}=v_0v_1\ldots v_{r_2}u_0v_{-r_1}\ldots v_{-1}v_0$, where $u_l=v_{-r_1-1+l}$ and $u_{-k}=v_{r_2+1-k}$.

\setcounter{claim}{0}
\begin{claim}\label{cl1}
Let $x\in [w_1,w_r]$ and $y\in \{u_{-1},u_1,v_{-1},v_1\}$. Then $xy\notin \widetilde{E}(G)$.
\end{claim}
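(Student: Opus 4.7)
The plan is to treat both parts of the statement, $xy \notin E(G)$ and $d(x)+d(y)<n$, simultaneously by a single contradiction to the maximality of $C$, using the o-cycle machinery of Lemma \ref{le1}. Suppose, for contradiction, that $xy \in \widetilde{E}(G)$ for some $x = w_i$ with $i \in [1,r]$ and some $y \in \{u_{-1},u_1,v_{-1},v_1\}$. By symmetry (reversing the orientation of $\overrightarrow{C}$ and/or swapping the two endpoints of $P$), it suffices to handle the case $y = u_1$.

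Consider the closed sequence of vertices
\[
\Gamma : u_0,\, w_1,\, w_2,\, \ldots,\, w_i,\, u_1,\, u_2,\, \ldots,\, u_{r_1},\, v_0,\, u_{-r_2},\, \ldots,\, u_{-1},\, u_0.
\]
Every consecutive pair in $\Gamma$ is an edge of $G$, coming either from $E(P)$ (the steps $u_0w_1,\,w_1w_2,\,\ldots,\,w_{i-1}w_i$) or from $E(C)$ (the steps $u_1u_2,\,\ldots,\,u_{r_1}v_0,\,v_0u_{-r_2},\,\ldots,\,u_{-1}u_0$); the only exceptional pair is $(w_i,u_1)=(x,y)$, which lies in $\widetilde{E}(G)$ by our standing assumption. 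Hence $\Gamma$ is an o-cycle of $G$. Since $w_1,\ldots,w_i$ are internal to $P$, they are pairwise distinct and disjoint from $V(C)$, so the vertex set of $\Gamma$ has size $1+i+r_1+1+r_2 = c+i \geq c+1$. Lemma \ref{le1} now yields a cycle $C'$ of $G$ with $V(\Gamma) \subseteq V(C')$, and therefore $|V(C')| \geq c+1 > c$, contradicting the choice of $C$ as a longest cycle of $G$.

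The three remaining cases are handled by completely analogous o-cycles: for $y=u_{-1}$ one uses $\Gamma'=u_0,u_1,\ldots,u_{r_1},v_0,u_{-r_2},\ldots,u_{-1},w_i,w_{i-1},\ldots,w_1,u_0$, and for $y\in\{v_1,v_{-1}\}$ one applies the same construction after swapping the roles of $u_0=w_0$ and $v_0=w_{r+1}$ along $P$; in every instance the o-cycle contains $V(C)$ together with at least one internal vertex of $P$, so Lemma \ref{le1} produces a cycle strictly longer than $C$. I do not expect any real obstacle here: the only things to verify are that $\Gamma$ has no repeated vertex (guaranteed since $V(P)\setminus V(C)$ and $V(C)$ are disjoint and each is internally non-repeating) and that exactly one consecutive pair of $\Gamma$ is a non-edge, which is precisely where the assumed relation $xy\in\widetilde{E}(G)$ is consumed. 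Neither the minimality of $r$ nor any heaviness hypothesis on $G$ is needed for this claim; the maximality of $C$ alone suffices.
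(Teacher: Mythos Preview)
Your proof is correct and follows essentially the same approach as the paper: construct an o-cycle from the initial segment $P[u_0,x]$, the assumed $\widetilde{E}$-edge $xy$, and the full cycle $C$, then invoke Lemma~\ref{le1} to obtain a cycle strictly longer than $C$. The paper's proof picks $y=u_{-1}$ as the representative case while you pick $y=u_1$, and you are a bit more explicit about vertex counts and the symmetry reduction, but the argument is the same.
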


\begin{proof}
Without loss of generality, assume that $y=u_{-1}$. Suppose that $xy\in \widetilde{E}(G)$. Then $C'=P[u_0,x]xy\overleftarrow{C}[y,u_0]$ is an \emph{o}-cycle containing all the vertices of $C$ and longer than $C$. By Lemma \ref{le1}, there is a cycle longer than $C$, contradicting the choice of $C$.
\end{proof}

\begin{claim}\label{cl2}
$u_{-1}u_1\in \widetilde{E}(G)$, $v_{-1}v_1\in \widetilde{E}(G)$.
\end{claim}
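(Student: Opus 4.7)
The plan is to prove each of the two non-edges or heavy pairs by exhibiting an induced claw centered at $u_0$ (respectively $v_0$) and invoking the claw-$o$-heavy hypothesis. I focus on $u_{-1}u_1\in\widetilde{E}(G)$; the statement $v_{-1}v_1\in\widetilde{E}(G)$ follows by the symmetric argument applied to $\{v_0,v_{-1},v_1,w_r\}$.

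First I would consider the four-vertex set $\{u_0,u_{-1},u_1,w_1\}$, which consists of four distinct vertices (since $w_1\in V(G)\setminus V(C)$ while $u_{-1},u_0,u_1$ are three consecutive vertices on $\overrightarrow{C}$). The vertex $u_0$ is adjacent to $u_{-1}$ and $u_1$ along $C$, and to $w_1$ along $P$. If $u_{-1}u_1\in E(G)$ then a fortiori $u_{-1}u_1\in\widetilde{E}(G)$ and we are done, so assume $u_{-1}u_1\notin E(G)$. By Claim~\ref{cl1} applied to $x=w_1$ and each $y\in\{u_{-1},u_1\}$, neither $w_1u_{-1}$ nor $w_1u_1$ lies in $\widetilde{E}(G)$; in particular, neither is an edge of $G$. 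Consequently $G[\{u_0,u_{-1},u_1,w_1\}]\cong K_{1,3}$ with center $u_0$.

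Next, since $G$ is claw-$o$-heavy, this induced claw must contain a pair of nonadjacent vertices with degree sum at least $n$. The three nonadjacent pairs are $(u_{-1},u_1)$, $(u_{-1},w_1)$ and $(u_1,w_1)$. By Claim~\ref{cl1}, each of the latter two pairs has degree sum strictly less than $n$ (because $u_{-1}w_1,u_1w_1\notin\widetilde{E}(G)$), so the only remaining possibility is $d(u_{-1})+d(u_1)\geq n$, i.e.\ $u_{-1}u_1\in\widetilde{E}(G)$.

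The main obstacle is making sure the four candidate vertices really are distinct and really do induce a claw, i.e.\ handling the degenerate configurations in which some of the indices collide: for instance if the length of $\overrightarrow{C}[u_0,v_0]$ or $\overrightarrow{C}[v_0,u_0]$ equals $1$, then one of $u_{\pm 1}$ would coincide with $v_0$, and if moreover $r=1$ (so $P=u_0w_1v_0$) then $w_1$ would be adjacent to $v_0$, potentially introducing an extra edge that destroys the claw. I expect these cases either to be excluded by the labelling convention used in the paragraph preceding Claim~\ref{cl1} or to be handled by a direct ad hoc verification using Claim~\ref{cl1} together with the minimality of $r$; the core argument is the short claw-$o$-heavy/Claim~\ref{cl1} combination outlined above.
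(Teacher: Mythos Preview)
Your argument is correct and is exactly the paper's proof: assume $u_{-1}u_1\notin E(G)$, use Claim~\ref{cl1} to see that $\{u_0,u_{-1},u_1,w_1\}$ induces a claw with $w_1u_{\pm1}\notin\widetilde E(G)$, and conclude $d(u_{-1})+d(u_1)\ge n$ from the claw-$o$-heavy hypothesis. Your worry about degenerate configurations is unnecessary: $w_1\notin V(C)$ gives distinctness from the $u_i$, and $r_1,r_2\ge1$ automatically (if $u_0,v_0$ were consecutive on $C$, replacing that edge by $P$ would yield a longer cycle), so $u_{\pm1}\neq v_0$ and no extra edges appear.
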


\begin{proof}
Assume that $u_{-1}u_1\notin E(G)$. By Claim \ref{cl1}, we have $w_1u_{-1}\notin E(G)$ and $w_1u_1\notin E(G)$. Hence $G[\{u_{-1},u_0,u_1,w_1\}]\cong K_{1,3}$. Note that $w_1u_{-1}\notin \widetilde{E}(G)$ and $w_1u_1\notin \widetilde{E}(G)$ by Claim \ref{cl1}. Since $G$ is claw-\emph{o}-heavy, it follows that $d(u_{-1})+d(u_1)\geq n$. Thus, we obtain $u_{-1}u_1\in \widetilde{E}(G)$.

Similarly, we can prove $v_{-1}v_1\in \widetilde{E}(G)$.
\end{proof}

\begin{claim}\label{cl3}
$u_0v_{\pm 1}\notin \widetilde{E}(G)$, $v_0u_{\pm 1}\notin \widetilde{E}(G)$.
\end{claim}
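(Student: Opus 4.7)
My plan is to argue by contradiction. Assuming one of the four putative $\widetilde{E}$-edges exists, I will exhibit an $o$-cycle that contains every vertex of $V(C)\cup V(P)$; Lemma~\ref{le1} then furnishes a genuine cycle of length at least $c+r>c$, contradicting the maximality of $C$. By the obvious symmetries (reversing $\overrightarrow{C}$ and/or swapping the roles of $u_0$ and $v_0$) it suffices to handle a single case, which I take to be $u_0v_1\in\widetilde{E}(G)$ with $v_1=u_{-r_2}$.

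The key observation is that Claim~\ref{cl2} already supplies a second $o$-edge $u_{-1}u_1$, letting me ``jump across'' $u_0$ on $C$. Combining the two $o$-edges, I reroute $C$ and $P$ as the closed sequence
\[
u_0,\ u_{-r_2},\ u_{-r_2+1},\ \ldots,\ u_{-1},\ u_1,\ u_2,\ \ldots,\ u_{r_1},\ v_0,\ w_r,\ w_{r-1},\ \ldots,\ w_1,\ u_0.
\]
Every consecutive pair lies in $\widetilde{E}(G)$ (edges of $C$, edges of $P$, and the two $o$-edges $u_0u_{-r_2}$ and $u_{-1}u_1$); every vertex of $V(C)\cup V(P)$ appears exactly once; and the length is $1+(r_2-1)+1+(r_1-1)+1+(r+1)=c+r$, as required.

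The remaining three cases should go through identically: in each, the assumed $\widetilde{E}$-edge joins one of $\{u_0,v_0\}$ to one of the ``second-neighbour'' vertices $u_{\pm 1},v_{\pm 1}$, and Claim~\ref{cl2} supplies whichever of $u_{-1}u_1$ or $v_{-1}v_1$ is needed to bypass the endpoint that would otherwise be repeated. The only (mild) obstacle I anticipate is purely bookkeeping: choosing the correct cyclic order and orientation of $C$ so that no vertex occurs twice while both $o$-edges are used. Since the arithmetic is always the same and always produces an $o$-cycle of length $c+r$, Lemma~\ref{le1} closes each case in the same way.
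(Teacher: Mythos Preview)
Your proof is correct and is essentially identical to the paper's: both argue by contradiction, invoke Claim~\ref{cl2} to get the $o$-edge $u_{-1}u_1$, and splice $P$ with the assumed $o$-edge to build an $o$-cycle of length $c+r$ covering $V(C)\cup V(P)$, then appeal to Lemma~\ref{le1}. Your explicit sequence is exactly the paper's cycle $C'=P\overleftarrow{C}[v_0,u_1]u_1u_{-1}\overleftarrow{C}[u_{-1},v_1]v_1u_0$ read in the reverse direction, and your symmetry reduction to a single case is legitimate.
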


\begin{proof}
Assume that $u_0v_1\in \widetilde{E}(G)$ or $u_0v_{-1}\in \widetilde{E}(G)$. Let $C'=P\overleftarrow{C}[v_0,u_1]u_1u_{-1}\overleftarrow{C}[u_{-1},v_1]v_1$ $u_0$ (if $u_0v_1\in \widetilde{E}(G)$) or $C'=P\overrightarrow{C}[v_0,u_{-1}]u_{-1}u_1\overrightarrow{C}[u_1,v_{-1}]v_{-1}u_0$ (if $u_0v_1\notin \widetilde{E}(G)$). By Claim \ref{cl2}, $C'$ is an \emph{o}-cycle containing all the vertices in $C$ and longer than $C$, a contradiction by Lemma \ref{le1}.

Similarly, we can prove $v_0u_{\pm 1}\notin \widetilde{E}(G)$.
\end{proof}

Let $u_{j_1}$ be the first vertex on $\overrightarrow{C}[u_1,u_{r_1}]$ such that $u_0u_{j_1}\notin E(G)$, $v_{j_2}$ be the first vertex on $\overrightarrow{C}[v_1, v_{r_2}]$ such that $v_0v_{j_2}\notin E(G)$. Obviously, we have $u_0u_1\in E(G)$ and $v_0v_1\in E(G)$. By Claim \ref{cl3}, we know that $u_0u_{r_1}\notin E(G)$ and $v_0v_{r_2}\notin E(G)$. Thus, $u_{j_1}$, $v_{j_2}$ exist, where $2\leq j_1\leq r_1$ and $2\leq j_2\leq r_2$.

\begin{claim}\label{cl4}
Let $w\in [w_1,w_r], u\in [u_1,u_{j_1}]$ and $v\in [v_1,v_{j_2}]$. Then we have
\begin{mathitem}
\item $wu\notin \widetilde{E}(G), wv\notin \widetilde{E}(G)$,
\item $v_0u\notin \widetilde{E}(G), u_0v\notin \widetilde{E}(G)$,
\item $uv\notin \widetilde{E}(G)$.
\end{mathitem}
\end{claim}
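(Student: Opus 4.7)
My plan is to prove each of (i), (ii), (iii) by contradiction: assuming the offending edge lies in $\widetilde{E}(G)$, I construct an explicit o-cycle $C'$ whose vertex set contains $V(C)\cup V(P)$ and whose length strictly exceeds $c$, and then apply Lemma~\ref{le1} to obtain a genuine cycle in $G$ at least as long as $C'$, contradicting the maximality of $C$. The ingredients available are: the two Claim~\ref{cl2} chords $u_{-1}u_1,\,u_{r_1}u_{-r_2}\in\widetilde{E}(G)$; the ``fan'' edges $u_0u_i\in E(G)$ ($i\in[1,j_1-1]$) and $v_0v_j\in E(G)$ ($j\in[1,j_2-1]$) coming from the definitions of $j_1,j_2$; the path $P$; and the hypothesized $\widetilde{E}$-edge itself.

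For part (i), assume $w_iu_k\in\widetilde{E}(G)$ with $k\ge 2$ (the case $k=1$ is immediate from Claim~\ref{cl1}). I would take
\[
C'=u_1u_{-1}u_{-2}\cdots u_{-r_2}v_0u_{r_1}u_{r_1-1}\cdots u_{k+1}u_kw_iw_{i-1}\cdots w_1u_0u_{k-1}u_{k-2}\cdots u_2u_1,
\]
whose length is $c+i$. The consecutive adjacencies come from edges of $C$, the chord $u_{-1}u_1$, the hypothesized $u_kw_i$, the path $P$, and $u_0u_{k-1}\in E(G)$ (legitimate since $k-1\le j_1-1$). The $wv$-case is symmetric.

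For part (ii), assume $v_0u_k\in\widetilde{E}(G)$ with $k\ge 2$ (the case $k=1$ is Claim~\ref{cl3}); I would use
\[
C'=v_0u_ku_{k+1}\cdots u_{r_1}u_{-r_2}u_{-r_2+1}\cdots u_{-1}u_1u_2\cdots u_{k-1}u_0w_1w_2\cdots w_rv_0,
\]
of length $c+r$, invoking both Claim~\ref{cl2} chords together with $u_0u_{k-1}$, the hypothesized $v_0u_k$, and $P$. For part (iii), write the $\widetilde{E}$-edge as $u_ku_{k'}$ where $k'=l-r_2-1\in[-r_2,-1]$; when $k,l\ge 2$ I would use
\[
C'=u_0u_{k-1}\cdots u_1u_{-1}\cdots u_{k'+1}u_{k'}u_ku_{k+1}\cdots u_{r_1}u_{-r_2}\cdots u_{k'-1}v_0w_r\cdots w_1u_0,
\]
again of length $c+r$; here $v_0u_{k'-1}=v_0v_{l-1}\in E(G)$ because $l-1\le j_2-1$.

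The main obstacle will be the careful treatment of boundary subcases where some ``interior'' detour edge collapses (e.g.\ $k=1$, or $l=1$ in (iii) so that the index $k'-1$ coincides with $v_0$). In each such subcase I would replace the collapsing piece by a direct edge of $C$: for instance, if $l=1$ in (iii) (so $k'=-r_2$) I would reroute via
\[
u_0u_{-1}\cdots u_{-r_2+1}u_{-r_2}u_ku_{k+1}\cdots u_{r_1}v_0w_r\cdots w_1u_0,
\]
and if $k=l=1$ I would use $u_0u_{-1}\cdots u_{-r_2}u_1u_2\cdots u_{r_1}v_0w_r\cdots w_1u_0$. Once these subcases are dispatched, verification reduces to checking that each listed vertex sequence is injective and each consecutive pair lies in $\widetilde{E}(G)$; then $|C'|\ge c+r\ge c+1>c$ yields the desired contradiction via Lemma~\ref{le1}.
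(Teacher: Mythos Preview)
Your overall strategy and your constructions for parts (i), (ii), and the generic $k,l\ge 2$ subcase of (iii) are correct; in fact, after reversing orientation and shifting the basepoint, they coincide with the paper's o-cycles. The one real problem is your boundary rerouting in (iii) for $l=1$ with $k\ge 2$. The sequence
\[
u_0u_{-1}\cdots u_{-r_2+1}u_{-r_2}u_ku_{k+1}\cdots u_{r_1}v_0w_r\cdots w_1u_0
\]
\emph{omits} the vertices $u_1,\ldots,u_{k-1}$, so it has only $c+1+r-k$ vertices, not $c+r$. For $k>r$ (e.g.\ $r=1$, $k=2$) this is no longer strictly longer than $C$, and your final sentence ``then $|C'|\ge c+r$'' fails here. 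The cause is that in collapsing the $v_{-1}v_1$-chord and the $v_0v_{l-1}$ fan edge (which are indeed unnecessary when $l=1$), you also dropped the $u_0u_{k-1}$ fan edge and the $u_{-1}u_1$ chord on the other side, which are still needed to pick up $u_1,\ldots,u_{k-1}$.

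The fix is immediate: for $l=1$, $k\ge 2$ use
\[
u_0u_{k-1}\cdots u_1\,u_{-1}\cdots u_{-r_2}\,u_k u_{k+1}\cdots u_{r_1}\,v_0 w_r\cdots w_1 u_0,
\]
which has length $c+r$ and visits every vertex of $C$ (this is precisely the paper's cycle for the case $u\neq u_1$, $v=v_1$, read in reverse). The symmetric subcase $k=1$, $l\ge 2$, which you mention but do not write out, should be handled analogously (retain the $v_0v_{l-1}$ fan edge and the $v_{-1}v_1$ chord, drop the $u$-side detour). With these two corrections your argument is complete and matches the paper's.
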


\begin{proof}
($i$) Assume that $wu\in \widetilde{E}(G)$. If $u=u_1$, then we get a contradiction by Claim \ref{cl1}. If $u=u_2$, then let $C'=P[u_0,w]wu\overrightarrow{C}[u,u_{-1}]u_{-1}u_1u_0$. If $u\in [u_3,u_{j_1}]$, then let $C'=P[u_0,w]wu\overrightarrow{C}[u,u_{-1}]u_{-1}u_1\overrightarrow{C}[u_1,u^{-}]u^{-}u_0$. By Claim \ref{cl2}, $C'$ is an \emph{o}-cycle longer than $C$ and contains all the vertices in $C$. Therefore, there is a cycle longer than $C$ by Lemma \ref{le1}, a contradiction.

The second assertion can be proved similarly.

($ii$) Assume that $v_0u\in \widetilde{E}(G)$. By Claim \ref{cl3}, we have $v_0u_1\notin \widetilde{E}(G)$. Hence we have $u\in [u_2,u_{j_1}]$. Then $C'=v_0u\overrightarrow{C}[u,v_{-1}]v_{-1}v_1\overrightarrow{C}[v_1,u_{-1}]u_{-1}u_1\overrightarrow{C}[u_1,u^{-}]u^{-}u_0P$ is an \emph{o}-cycle longer than $C$ and contains all the vertices in $C$ by Claim \ref{cl2}. By Lemma \ref{le1}, there is a cycle longer than $C$, contradicting the choice of $C$.

Similarly, we can prove that $u_0v\notin \widetilde{E}(G)$.

($iii$) Assume that $uv\in \widetilde{E}(G)$. By Claim \ref{cl2}, we have $u_{-1}u_1\in \widetilde{E}(G)$ and $v_{-1}v_1\in \widetilde{E}(G)$. Then
$C'=Pv_0v^{-}\overleftarrow{C}[v^{-},v_1]v_1v_{-1}\overleftarrow{C}[v_{-1},u]uv\overrightarrow{C}[u,u_{-1}]u_{-1}u_1\overrightarrow{C}[u_1,u^{-}]u^{-}u_0$ (if $u\neq u_1$ and $v\neq v_1$) or $C'=Pv_0v^{-}\overleftarrow{C}[v^{-},v_1]v_1v_{-1}\overleftarrow{C}[v_{-1},u_1]u_1v\overrightarrow{C}[v,u_0]$ (if $u=u_1$ and $v\neq v_1$) or $C'=P\overleftarrow{C}[v_0,u]uv_1\overrightarrow{C}[v_1,u_{-1}]u_{-1}u_1\overrightarrow{C}[u_1,u^{-}]u^{-}u_0$ (if $u\neq u_1$ and $v=v_1$) or $C'=P\overleftarrow{C}[v_0,u_1]u_1v_1\overrightarrow{C}[v_1,u_0]$ (if $u=u_1$ and $v= v_1$) is an \emph{o}-cycle longer than $C$ and contains all the vertices in $C$. By Lemma \ref{le1}, there is a cycle containing all the vertices in $V(P)\cup V(C)$, a contradiction.
\end{proof}

\begin{claim}\label{cl5}
$d(u_0)+d(v_0)<n$.
\end{claim}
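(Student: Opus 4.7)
The plan is to argue by contradiction. Suppose $d(u_0)+d(v_0)\geq n$, so that $u_0v_0\in\widetilde{E}(G)$. The goal is to construct an $o$-cycle $C'$ of $G$ with $V(C')\supseteq V(C)\cup V(P)$; since $|V(C')|\geq c+r>c$, Lemma~\ref{le1} will then yield a cycle of $G$ strictly longer than $C$, contradicting the choice of $C$.

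The first step is a reduction via Lemma~\ref{le3}. Let $H$ be the component of $G-V(C)$ containing the interior of $P$ and let $A=N(H)\cap V(C)$; since $w_1u_0,w_rv_0\in E(G)$, both $u_0$ and $v_0$ lie in $A$, so $u_1,v_1\in A^+$ and $u_{-1},v_{-1}\in A^-$. Lemma~\ref{le3}(ii) therefore gives
\[
d(u_1)+d(v_1)<n\quad\text{and}\quad d(u_{-1})+d(v_{-1})<n.
\]
Combining these with Claim~\ref{cl2} (which gives $u_{-1}u_1,v_{-1}v_1\in\widetilde{E}(G)$), it cannot happen that both $u_{-1}u_1$ and $v_{-1}v_1$ are non-edges of $G$: if so, then $d(u_{-1})+d(u_1)\geq n$ and $d(v_{-1})+d(v_1)\geq n$, and summing would contradict the two inequalities above. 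So, up to symmetry, we may assume $u_{-1}u_1\in E(G)$, which gives the triangle $u_0u_1u_{-1}$.

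With the triangle $u_0u_1u_{-1}$ and the $\widetilde{E}$-edges $u_0v_0$ and $v_{-1}v_1$ at hand, I would exhibit an explicit $o$-cycle $C'$ on $V(C)\cup V(P)$. A natural candidate, valid whenever $j_1=r_1$, is
\[
C'=u_0\, w_1 w_2\cdots w_r\, v_0\, v_{-1}\, v_1 v_2\cdots v_{r_2}\, v_{-r_1} v_{-r_1+1}\cdots v_{-2}\, u_0,
\]
whose consecutive pairs are all in $\widetilde{E}(G)$: the $P$-edges $u_0w_1,w_iw_{i+1},w_rv_0$; the $C$-edge $v_0v_{-1}$; the bridge $v_{-1}v_1$; the $C$-arc $v_1v_2\cdots v_{r_2}$; the bridge $v_{r_2}v_{-r_1}=u_{-1}u_1\in E(G)$; the $C$-arc $v_{-r_1}\cdots v_{-2}$; and finally the closing edge $v_{-2}u_0=u_{r_1-1}u_0$, which lies in $E(G)$ as soon as $r_1-1<j_1$. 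In the complementary sub-cases (different relations among $j_1,j_2,r_1,r_2$ and between $r$ and these), an analogous $o$-cycle works after swapping the roles of the $u$- and $v$-sides or ending the traversal at a different $u_0$-neighbor on $C$; in each case Lemma~\ref{le1} applied to $C'$ produces a cycle longer than $C$.

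The hard part will be the case analysis that closes the $o$-cycle at $u_0$. A naive routing using only one of the bridges $u_{-1}u_1$ or $v_{-1}v_1$ ends at $v_{-1}$ or $v_1$, which are not $\widetilde{E}$-adjacent to $u_0$ by Claim~\ref{cl3}, so both bridges must be used in concert (and the chord $u_0v_0$ possibly re-used) to land the traversal on a vertex $u_k$ with $u_0u_k\in E(G)$. In addition, the case $r=1$ requires separate treatment: the minimality of $r$ no longer forces $w_1$ to have no $C$-neighbors besides $u_0,v_0$, so additional members of $A$ may arise and must be controlled using the refined non-edge information from Claims~\ref{cl3} and~\ref{cl4}. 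This case is the technical heart of the proof.
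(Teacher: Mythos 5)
There is a genuine gap: the heart of your argument --- ``exhibit an explicit $o$-cycle $C'$ on $V(C)\cup V(P)$'' --- is exactly the step you do not carry out, and in general it cannot be carried out with the $\widetilde{E}$-edges available at this point of the proof. Besides $E(C)\cup E(P)$, the only $\widetilde{E}$-edges you are entitled to are $u_{-1}u_1$, $v_{-1}v_1$ (Claim~\ref{cl2}) and the assumed pair $u_0v_0$. Such an $o$-cycle would have to contain a spanning $(v_0,u_0)$-route through $V(C)$; deleting $u_0$ and $v_0$ from the auxiliary graph on $V(C)$ leaves the cycle $u_1u_2\cdots u_{r_1}v_1v_2\cdots v_{r_2}u_1$ (the two arcs joined by the two bridges), and the route restricted to it would be a Hamilton path of this cycle joining a $C$-neighbour of $v_0$ (one of $u_{r_1},v_1$) to a $C$-neighbour of $u_0$ (one of $u_1,v_{r_2}$). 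In a cycle a Hamilton path must join two adjacent vertices, and none of the four cross pairs is adjacent once $r_1,r_2\geq 3$, so the construction fails. Your fallback of ``landing on a vertex $u_k$ with $u_0u_k\in E(G)$'' presupposes extra chords from $u_0$ or $v_0$ into the arcs, which nothing guarantees (it may well be that $j_1=j_2=2$ and $u_0,v_0$ have no further neighbours on $C$); your displayed candidate needs $u_0u_{r_1-1}\in E(G)$, i.e.\ $j_1=r_1$, and notably never uses the assumed pair $u_0v_0$ at all --- which signals the deeper problem that an $\widetilde{E}$-edge between two vertices already on $C$ gives no extension leverage by itself (also, an $o$-cycle cannot ``re-use'' the chord $u_0v_0$, since its vertices are distinct).

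The paper proves the claim unconditionally by counting rather than by constructing a longer $o$-cycle. It takes a longest $(u_0,v_0)$-path $P'$ internally disjoint from $C$, with $r'$ internal vertices, and shows first that $d_{G-C}(u_0)+d_{G-C}(v_0)\leq 2r'$ (any off-$P'$ neighbour would create an induced claw whose claw-$o$-heaviness forces a heavy vertex off the heavy cycle $C$, using Lemma~\ref{le2}, or would contradict the maximality of $P'$). Then a crossing argument on $C$ (Subclaims~\ref{su5.2}--\ref{su5.4}: no neighbour of $u_0$ is immediately followed by a neighbour of $v_0$, and there is a gap of at least $r'$ vertices between the last neighbour of $u_0$ and the first subsequent neighbour of $v_0$ on each arc) gives $d_C(u_0)+d_C(v_0)\leq c-2r'$, whence $d(u_0)+d(v_0)\leq c<n$. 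If you want to keep a contradiction format, the correct mechanism is this neighbour count, not an explicit $o$-cycle built from the two bridges and the pair $u_0v_0$; your reduction via Lemma~\ref{le3} to ``$u_{-1}u_1\in E(G)$ or $v_{-1}v_1\in E(G)$'' is fine (it is the paper's Claim~\ref{cl6}) but it does not feed into a proof of this claim.
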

\begin{proof}
Let $P'=u_0x_1x_2,\ldots,x_{r'}v_0$ be a $(u_0,v_0)$-path internally-disjoint with $C$ such that its length is as large as possible.

\begin{subclaim}\label{su5.1}
$d_{G-C}(u_0)+d_{G-C}(v_0)\leq 2r'$.
\end{subclaim}

\begin{proof}
We will show that all the neighbors of $u_0$ in $G-C$ are contained in $V(P')$. Assume not. Let $x'_1$ be a neighbor of $u_0$, which is in $V(G-C)$ but not in $V(P')$. Obviously, we have $x_1u_1,x'_1u_1\notin \widetilde{E}(G)$; otherwise there is a cycle longer than $C$ by Lemma \ref{le1}, contradicting the choice of $C$. If $x_1x'_1\notin E(G)$, then $G[\{u_0,u_1,x_1,x'_1\}]\cong K_{1,3}$. Note that $G$ is claw-\emph{o}-heavy. Thus, we have $d(x_1)+d(x'_1)\geq n$. This implies that either $x_1$ or $x'_1$ is heavy. However, it follows from the fact $G$ is claw-\emph{o}-heavy and Lemma \ref{le2} that $C$ is heavy, a contradiction. If $x_1x'_1\in E(G)$, then $P''=u_0x_1'x_1P'[x_1,v_0]$ is a $(u_0,v_0)$-path internally-disjoint with $C$ and longer than $P'$, contradicting the choice of $P'$. Therefore, it follows that $d_{G-C}(u_0)\leq r'$.

Similarly, we can obtain $d_{G-C}(v_0)\leq r'$ and the proof of this claim is complete.
\end{proof}

Let $u_k$ be the last vertex on $\overrightarrow{C}[u_1,u_{r_1}]$ such that $u_0u_k\in E(G)$, $u_l$ be the first vertex on $\overrightarrow{C}[u_{k+1},u_{r_1}]$ such that $v_0u_l\in E(G)$.

\begin{subclaim}\label{su5.2}
For every vertex $u_{k'}\in N_{C[u_1,u_{k-1}]}(u_0)\cup \{u_0\}$, $v_0u_{k'+1}\notin E(G)$.
\end{subclaim}

\begin{proof}
By Claim \ref{cl3}, we have $v_0u_1\notin E(G)$. If $u_{k'}\neq u_0$, assume that $u_{k'}u_0\in E(G)$ and $v_0u_{k'+1}\in E(G)$. Then
$C'=Pv_0u_{k'+1}\overrightarrow{C}[u_{k'+1},v_{-1}]v_{-1}v_1\overrightarrow{C}[v_1,u_{-1}]u_{-1}u_1\overrightarrow{C}[u_1,u_{k'}]u_{k'}u_0$ is an \emph{o}-cycle containing all the vertices in $V(P)\cup V(C)$. Thus, there is a cycle longer than $C$ by Lemma \ref{le1}, a contradiction.
\end{proof}

\begin{subclaim}\label{su5.3}
$|[u_{k+1},u_{l-1}]|=l-k-1\geq r'$.
\end{subclaim}
\begin{proof}
Assume that $|[u_{k+1},u_{l-1}]|< r'$. Then $C'=P'v_0u_l\overrightarrow{C}[u_l,v_{-1}]v_{-1}v_1\overrightarrow{C}[v_1,u_{-1}]u_{-1}u_1$\\$\overrightarrow{C}[u_1,u_k]u_ku_0$ is an \emph{o}-cycle which contains all the vertices in $V(C)\setminus [u_{k+1},u_{l-1}]\cup V(P')$ and $|V(C')|> c$. Hence there is a cycle longer than $C$ by Lemma \ref{le1}, a contradiction.
\end{proof}

The following claim is obvious.

\begin{subclaim}\label{su5.4}
$N_C(u_0)\cap [u_{k+1},u_{r_1}]=\emptyset$.
\end{subclaim}

Let $d_1=|N_{C[u_1,u_k]}(u_0)|$. Then by Claim \ref{su5.4}, $d_{C[u_1,u_{r_1}]}(u_0)=d_1$. By Claims \ref{su5.2} and \ref{su5.3}, we have $d_{C[u_1,u_r]}(v_0)=d_{C[u_1,u_k]}(v_0)+d_{C[u_{k+1},u_{l-1}]}(v_0)+d_{C[u_l,u_{r_1}]}(v_0)\leq k-d_1+r_1-l+1\leq r_1-d_1-r'$. Thus $d_{C[u_1,u_{r_1}]}(u_0)+d_{C[u_1,u_{r_1}]}(v_0)\leq r_1-r'$, and similarly, $d_{C[v_1,v_{r_2}]}(u_0)+d_{C[v_1,v_{r_2}]}(v_0)\leq r_2-r'$. Hence $d_{C}(u_0)+d_{C}(v_0)\leq r_1+r_2-2r'+2=c-2r'$. Note that $d_{G-C}(u_0)+d_{G-C}(v_0)\leq 2r'$ by Claim \ref{su5.1}. Therefore, $d(u_0)+d(v_0)\leq c<n$.
\end{proof}

\begin{claim}\label{cl6}
Either $u_{-1}u_1\in E(G)$ or $v_{-1}v_1\in E(G)$.
\end{claim}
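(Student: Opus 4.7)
The plan is to suppose for contradiction that both $u_{-1}u_1\notin E(G)$ and $v_{-1}v_1\notin E(G)$, and then derive a contradiction by pitting the claw-$o$-heavy hypothesis (which forces two large degree sums) against Lemma~\ref{le3} (which upper-bounds other degree sums). First I would observe that the argument used inside the proof of Claim~\ref{cl2} gives exactly what I need: since $G[\{u_{-1},u_0,u_1,w_1\}]\cong K_{1,3}$ and Claim~\ref{cl1} rules out $w_1u_{\pm 1}\in\widetilde{E}(G)$, the claw-$o$-heavy condition applied to this claw forces $d(u_{-1})+d(u_1)\ge n$; the symmetric argument on the claw $G[\{v_{-1},v_0,v_1,w_r\}]$ gives $d(v_{-1})+d(v_1)\ge n$. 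Adding these produces $d(u_{-1})+d(u_1)+d(v_{-1})+d(v_1)\ge 2n$.

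Next I would let $H$ be the component of $G-V(C)$ containing the internal vertices $w_1,\ldots,w_r$ of $P$ (which all lie in a single component because $P[w_1,w_r]\subseteq G-V(C)$), and let $A$ be the set of neighbours of $H$ on $C$. Since $u_0w_1\in E(G)$ and $v_0w_r\in E(G)$, both $u_0$ and $v_0$ belong to $A$, so $\{u_{-1},v_{-1}\}\subseteq A^-$ and $\{u_1,v_1\}\subseteq A^+$. Because $C$ is a longest, hence nonextendable, cycle of $G$, Lemma~\ref{le3}(ii) applies and yields $d(u_{-1})+d(v_{-1})<n$ together with $d(u_1)+d(v_1)<n$; their sum $d(u_{-1})+d(u_1)+d(v_{-1})+d(v_1)<2n$ contradicts the inequality obtained in the previous paragraph.

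The only subtlety I anticipate is checking that the pairs $\{u_{-1},v_{-1}\}$ and $\{u_1,v_1\}$ to which Lemma~\ref{le3}(ii) is applied are genuinely pairs of distinct vertices. The only collisions possible are $u_1=v_{-1}$ when $r_1=1$ and $u_{-1}=v_1$ when $r_2=1$, and in either such degenerate case the contradiction becomes strictly easier, because a single one of the inequalities from Claim~\ref{cl2} already clashes with Lemma~\ref{le3}(ii) applied to the relevant two-element subset of $A^-$ or $A^+$. Since the heavy lifting is done entirely by two previously established tools, I do not anticipate any essential obstacle in executing this plan.
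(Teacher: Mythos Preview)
Your proof is correct and follows essentially the same route as the paper: assume both edges are absent, use the claw-$o$-heavy condition (equivalently, Claim~\ref{cl2}) to get $d(u_{-1})+d(u_1)\ge n$ and $d(v_{-1})+d(v_1)\ge n$, and then contradict Lemma~\ref{le3}. Two small simplifications: you can cite Claim~\ref{cl2} directly rather than rerunning its claw argument, and the vertex-collision subtlety never arises since the paper has already established $r_1,r_2\ge 2$ (via the existence of $u_{j_1},v_{j_2}$ with $j_1,j_2\ge 2$) before Claim~\ref{cl6}.
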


\begin{proof}
Assume that $u_{-1}u_1\notin E(G)$ and $v_{-1}v_1\notin E(G)$. By Claim \ref{cl2}, we have $d(u_{-1})+d(u_1)\geq n$ and $d(v_{-1})+d(v_1)\geq n$. Thus, we obtain $d(u_{-1})+d(v_{-1})\geq n$ or $d(u_1)+d(v_1)\geq n$, contradicting Lemma \ref{le3}.
\end{proof}

We divide the remainder part of the proof into three cases.

\begin{case}
$S=P_6$.
\end{case}

If $u_0v_0\notin E(G)$, then by Claim \ref{cl4} and the fact that $P$ is a required shortest path, $R_1=G[\{u_0,u_{j_1-1},u_{j_1},v_0,v_{j_2-1},v_{j_2},w_1,w_2,\ldots,w_r\}]\cong P_{r+6}$. Since $G$ is $P_6$-\emph{f}-heavy, $G$ is $P_{6+r}$-\emph{f}-heavy. Note that $C$ is heavy by Lemma \ref{le2}. Hence each of $\{w_1,w_r\}$ is light. It follows from the fact $d_{R_1}(w_1,u_{j_1-1})=d_{R_1}(w_r,v_{j_2-1})=2$ that each of $\{u_{j_1-1},v_{j_2-1}\}$ is heavy. Then we have $u_{j_1-1}v_{j_2-1}\in \widetilde{E}(G)$, and it contradicts to Claim \ref{cl4} ($iii$).

If $u_0v_0\in E(G)$, then by Claim \ref{cl4}, $R_2=G[\{u_0,u_{j_1-1},u_{j_1},v_0,v_{j_2-1},v_{j_2}\}]\cong P_{6}$. By Claim \ref{cl5}, either $u_0$ or $v_0$ is light. Without loss of generality, assume that $u_0$ is light. Since $G$ is $P_6$-\emph{f}-heavy and $d_{R_2}(u_0,u_{j_1})=d_{R_2}(u_0,v_{j_2-1})=2$, we have each of $\{u_{j_1},v_{j_2-1}\}$ is heavy. It follows that $u_{j_1}v_{j_2-1}\in \widetilde{E}(G)$, which contradicts to Claim \ref{cl4} ($iii$).

\begin{case}
$S=Z_2$
\end{case}

\begin{subcase}\label{suc1}
$u_0v_0\in E(G)$.
\end{subcase}
We claim that $r=1$. If $r\geq 2$, then by the choice of $P$, we have $w_1v_0\notin E(G)$ and $w_ru_0\notin E(G)$. By Claims \ref{cl1} and \ref{cl3}, we obtain $w_1u_1,w_rv_{-1},u_1v_0,v_{-1}u_0\notin \widetilde{E}(G)$. Hence $G[\{w_1,u_0,u_1,v_0\}]\cong K_{1,3}$ and $G[\{u_0,v_0,v_{-1},w_r\}]\cong K_{1,3}$. Note that $G$ is claw-\emph{o}-heavy. It follows that $d(w_1)+d(v_0)\geq n$ and $d(w_r)+d(u_0)\geq n$. This means that either $d(w_1)+d(w_r)\geq n$ or $d(u_0)+d(v_0)\geq n$. However, we have $d(w_1)+d(w_r)<n$ since $C$ is heavy. By Claim \ref{cl5}, we can obtain $d(u_0)+d(v_0)<n$, a contradiction.

By Claim \ref{cl4}, $R_1=G[\{u_0,w_1,v_0,u_{j_1-1},u_{j_1}\}]\cong Z_2$ and $R_2=G[\{u_0,w_1,v_0,v_{j_2-1},v_{j_2}\}]\cong Z_2$. Note that $d_{R_1}(w_1,u_{j_1-1})=d_{R_2}(w_1,v_{j_2-1})=2$ and $w_1$ is light. It follows from the condition $G$ is $Z_2$-\emph{f}-heavy that each of $\{u_{j_1-1},v_{j_2-1}\}$ is heavy. Thus, $u_{j_1-1}v_{j_2-1}\in \widetilde{E}(G)$, contradicting Claim \ref{cl4} ($iii$).
\begin{subcase}
$u_0v_0\notin E(G)$.
\end{subcase}

By Claim \ref{cl6}, we have $u_{-1}u_1\in E(G)$ or $v_{-1}v_1\in E(G)$. Without loss of generality, suppose that $u_{-1}u_1\in E(G)$. By Claims \ref{cl1}, \ref{cl3} and the hypothesis that $u_0v_0\notin E(G)$, we have $R_1=G[\{w_1,w_2,u_{-1},u_0,u_1\}]\cong Z_2$. Note that $w_1$ is light and $d_{R_1}(w_1,u_1)=d_{R_1}(w_1,u_{-1})=2$. It follows from the fact $G$ is $Z_2$-\emph{f}-heavy that each of $\{u_{-1},u_1\}$ is heavy.

If $v_{-1}v_1\notin E(G)$, then $G[\{w_r,v_{-1},v_0,v_1\}]\cong K_{1,3}$ by Claim \ref{cl1}. Since $G$ is claw-\emph{o}-heavy, we have $d(v_{-1})+d(v_1)\geq n$ by Claim \ref{cl1}. Hence we can obtain $d(u_{-1})+d(u_1)\geq n$ or $d(v_{-1})+d(v_1)\geq n$, which contradicts to Lemma \ref{le3}.

If $v_{-1}v_1\in E(G)$, then $R_1=G[\{w_{r-1},w_r,v_{-1},v_0,v_1\}]\cong Z_2$.  Note that $w_r$ is light and $d_{R_1}(w_r,v_{-1})=d_{R_1}(w_r,v_1)=2$. It follows from the fact $G$ is $Z_2$-\emph{f}-heavy that $d(v_{-1})\geq n/2$ and $d(v_1)\geq n/2$. Hence $d(u_{-1})+d(v_{-1})\geq n$, contradicting Lemma \ref{cl3}.

\begin{case}
$S=W$ or $S=N$.
\end{case}
When $S=W$ and $u_0v_0\in E(G)$, similarly as Case 2.1, we can prove that $r=1$. By Claim \ref{cl4}, $R_1=G[\{u_0,w_1,v_0,u_{j_1-1},u_{j_1},v_{j_2-1}\}]\cong W$. Note that $d_{R_1}(w_1,u_{j_1-1})=d_{R_1}(w_1,v_{j_2-1})=2$ and $w_1$ is light. Since $G$ is $W$-\emph{f}-heavy, it follows that each of $\{u_{j_1-1},v_{j_2-1}\}$ is heavy. Thus, $u_{j_1-1}v_{j_2-1}\in \widetilde{E}(G)$, contradicting Claim \ref{cl4} ($iii$).

Now we can suppose that $S=W$ and $u_0v_0\notin E(G)$ or $S=N$.

By Claim \ref{cl6}, we have $u_{-1}u_1\in E(G)$ or $v_{-1}v_1\in E(G)$. Suppose, without loss of generality, that $u_{-1}u_1\in E(G)$. Note that $G[u_{-1},u_1]$ is $(u_{-1},u_0,u_1)$-composed.

\begin{claim}\label{cl7}
If $G[u_{-k},u_l]$ is $(u_{-k},u_0,u_l)$-composed with the canonical ordering $u_{-k},u_{-k+1},\ldots,$\\$u_{l-1},u_l$, then $k\leq r_2-2$ and $l\leq r_1-2$.
\end{claim}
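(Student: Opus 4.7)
The plan is to argue by contradiction, using Lemma \ref{le4} to convert the composed structure of $D = G[u_{-k}, u_l]$ into Hamilton paths and spanning pairs inside $D$, and then to reassemble these with arcs of $C$ and the bridge path $P$ to build a cycle of $G$ longer than $C$. By the left-right symmetry of the canonical ordering (reverse the orientation of $C$, which swaps $r_1 \leftrightarrow r_2$ and $k \leftrightarrow l$) it suffices to establish $l \leq r_1 - 2$, so we assume for contradiction that $l \in \{r_1 - 1, r_1\}$.

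First suppose $l = r_1$. Lemma \ref{le4}(i) yields a Hamilton $(u_0, u_{-k})$-path $Q$ of $D$. Since $u_{-k-1}, \ldots, u_{-r_2}, v_0$ lie consecutively along $\overleftarrow{C}[u_{-k}, v_0]$, the concatenation
\[
C^{\ast} \;=\; u_0\, Q\, u_{-k}\, \overleftarrow{C}[u_{-k}, u_{-r_2}]\, v_0\, \overleftarrow{P}\, u_0
\]
is a cycle of $G$ whose vertex set equals $V(C) \cup \{w_1, \ldots, w_r\}$, giving $|V(C^{\ast})| = c + r > c$ and contradicting the maximality of $C$.

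Now suppose $l = r_1 - 1$; then $u_{r_1} \notin V(D)$, but $u_{r_1} u_l$ and $u_{r_1} v_0$ are edges of $C$. My first move is to try to enlarge $D$ so that it contains $u_{r_1}$: if $u_{r_1}$ has a second neighbour in $V(D) \setminus \{u_l\}$, then adding $u_{r_1}$ is a $1$-extension from $u_l$ to $u_{r_1}$; and if instead $u_{r_1} u_{-k-1} \in E(G)$, then adding $u_{r_1}$ and $u_{-k-1}$ simultaneously is a $2$-extension from $(u_l, u_{-k})$ to $(u_{r_1}, u_{-k-1})$. In either subcase the enlarged graph is composed with $l' = r_1$, so we are back in the first case. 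The residual possibility is that $u_{r_1}$'s only neighbour in $V(D)$ is $u_l$ and $u_{r_1} u_{-k-1} \notin E(G)$; here the plan is to apply Lemma \ref{le5} with the path $P$, the pair $(u_l, u_{-k})$ at the $u_0$-end, and the pair $(v_{-1}, v_1)$ at the $v_0$-end. Conditions (i) and (ii) in the definition of ``$x$-good'' are handed to us by Lemma \ref{le4} applied to $D$ on the $u_0$-side, and by the $\widetilde{E}(G)$-triangle $\{v_{-1}, v_0, v_1\}$ from Claim \ref{cl2} on the $v_0$-side. The main obstacle is condition (iii), a heavy-pair bound of the form $d(u_l) + d(x') \geq n$ (or the symmetric bound for $u_{-k}$) for some $x' \in [u_{-k}, u_l]$; we expect to extract this from the $S$-\emph{f}-heavy hypothesis with $S \in \{P_6, Z_2, W, N\}$, together with Claims \ref{cl1}--\ref{cl6}, by exhibiting a suitable induced copy of $S$ involving $u_{r_1}$, $v_0$, $w_1$, and vertices of $D$.
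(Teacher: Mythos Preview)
Your case $l = r_1$ is clean and correct, but the residual sub-case of $l = r_1 - 1$ has a genuine gap. First, conditions (i) and (ii) of ``$u_0$-good'' for the pair $(u_l, u_{-k})$ are \emph{not} handed to you by Lemma~\ref{le4}: condition (i) demands a $(u_0, u_{-k})$-path on exactly $[u_{-k}, u_{l-1}]$ (or a $(u_0, u_l)$-path on $[u_{-k+1}, u_l]$), whereas Lemma~\ref{le4} applied to $D$ only produces Hamilton paths on the full set $[u_{-k}, u_l]$; nothing guarantees that $G[u_{-k}, u_{l-1}]$ is itself composed with the required endpoints. Second, and more seriously, your plan to extract the degree-sum bound (iii) from the $S$-\emph{f}-heavy hypothesis is off-track: Claim~\ref{cl7} is a purely structural statement, and the paper proves it with no appeal to $S$ at all---the $f$-heavy hypothesis enters only from Claim~\ref{cl8} onward. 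You offer no concrete induced copy of $S$, and none presents itself naturally here.

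The paper's argument avoids your case split entirely. In your symmetric formulation (assuming $l \geq r_1 - 1$), it passes to the first graph $D'$ in the canonical sequence containing $u_{r_1 - 1}$; then $D'$ is itself composed with $V(D') = [u_{-x'}, u_{r_1-1}]$ for some $x'\le k$, so Lemma~\ref{le4}(i) yields a Hamilton $(u_0, u_{-x'})$-path $P'$ in $D'$. The $o$-cycle
\[
C' \;=\; v_{-1}\,v_0\,P[v_0, u_0]\,P'[u_0, u_{-x'}]\,\overleftarrow{C}[u_{-x'}, v_1]\,v_1\,v_{-1}
\]
(using $v_1 v_{-1} \in \widetilde{E}(G)$ from Claim~\ref{cl2}) has $V(C') \supseteq V(C) \cup V(P)$, and Lemma~\ref{le1} finishes. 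Alternatively, you could simply patch your own $l = r_1$ construction to cover $l = r_1 - 1$ by inserting the one missing vertex $u_{r_1} = v_{-1}$ via this same $o$-edge: take $C^{\ast} = u_0\,Q\,u_{-k}\,\overleftarrow{C}[u_{-k}, v_1]\,v_1 v_{-1}\,v_0\,\overleftarrow{P}\,u_0$ and invoke Lemma~\ref{le1}. The detour through $1$- and $2$-extensions and Lemma~\ref{le5} is unnecessary.
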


\begin{proof}
Suppose that $k\geq r_2-1$. Let $D_1$, $D_2$,\ldots, $D_r$ be a canonical sequence of $G[u_{-k},u_l]$ corresponding to the canonical ordering $u_{-k},u_{-k+1},\ldots,u_{l-1},u_l$. Consider the graph $D'=D_{\widehat{-r_2+1}}$, where $\widehat{-r_2+1}$ be the smallest integer such that $u_{-r_2+1}\in D_{\widehat{-r_2+1}}$. Note that the index $\widehat{-r_2+1}$ exists since $0\geq -r_2+1\geq -k$. By Lemma \ref{le4}, there is a $(u_0,u_l)$-path $P'$ satisfying $V(P')=[u_{-r_2+1},u_l]$. Then $C'=v_1v_0P[v_0,u_0]P'[u_0,u_l]\overrightarrow{C}[u_l,v_{-1}]v_{-1}v_1$ is an \emph{o}-cycle such that $V(C)\cup V(P)\subseteq V(C')$, and there is a cycle longer than $C$ by Lemma \ref{le1}, a contradiction. Similarly, we can prove that $l\leq r_1-2$.
\end{proof}

\begin{claim}\label{cl8}
If $G[u_{-k},u_l]$ is $(u_{-k},u_0,u_l)$-composed with the canonical ordering $u_{-k},u_{-k+1},\ldots,$\\$u_{l-1},u_l$, where $k\leq r_2-2$ and $l\leq r_1-2$, and moreover the following two statements hold: \\
($i$) there is not a heavy pair in $G[u_{-k-1},u_{l+1}]$,\\
($ii$) there is not a heavy triangle in $G[u_{-k-1},u_{l+1}]$,\\
then one of the following is true:\\
(1) $G[u_{-k-1},u_l]$ is $(u_{-k-1},u_0,u_l)$-composed with the canonical ordering $u_{-k-1},u_k,\ldots,u_l$,\\
(2) $G[u_{-k},u_{l+1}]$ is $(u_{-k},u_0,u_{l+1})$-composed with the canonical ordering $u_{-k},u_k,\ldots,u_{l+1}$,\\
(3) $G[u_{-k-1},u_{l+1}]$ is $(u_{-k-1},u_0,u_{l+1})$-composed with the canonical ordering $u_{-k-1},u_k,\ldots,$\\$u_{l+1}$.
\end{claim}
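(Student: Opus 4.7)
The plan is to establish the claim by assuming that conclusions (1) and (2) both fail, and deducing that (3) must then hold. First I would unpack the failure of (1): a 1-extension of $G[u_{-k},u_l]$ from anchor $u_{-k}$ to the new vertex $u_{-k-1}$ needs the anchor edge $u_{-k-1}u_{-k}$ (automatic on $C$) together with a second neighbour of $u_{-k-1}$ in $V(G[u_{-k},u_l])\setminus\{u_{-k}\}=[u_{-k+1},u_l]$. Hence (1) failing is equivalent to $N(u_{-k-1})\cap[u_{-k+1},u_l]=\emptyset$, and symmetrically (2) failing yields $N(u_{l+1})\cap[u_{-k},u_{l-1}]=\emptyset$. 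Under these two non-adjacency conditions, establishing $u_{-k-1}u_{l+1}\in E(G)$ would, together with the automatic cycle edges $u_{-k}u_{-k-1}$ and $u_lu_{l+1}$, realize the 2-extension that (3) requires.

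To force the edge $u_{-k-1}u_{l+1}\in E(G)$ I would argue by contradiction: assume additionally that $u_{-k-1}u_{l+1}\notin E(G)$ and expose an induced claw inside $G[u_{-k-1},u_{l+1}]$. The natural candidates are a claw centred at $u_{-k}$ with leaves $u_{-k-1}$, $u_{-k+1}$ and a third suitable vertex---for instance the ``partner'' $w\in[u_{-k+2},u_l]$ of $u_{-k}$ provided by the composed construction at the step when $u_{-k}$ joined the canonical sequence---together with its symmetric counterpart centred at $u_l$ with leaves $u_{l+1}$, $u_{l-1}$ and an analogous partner. Once an induced claw is found inside $[u_{-k-1},u_{l+1}]$, claw-$o$-heaviness of $G$ hands us a pair of non-adjacent leaves whose degree sum is at least $n$; this is a heavy pair lying inside $G[u_{-k-1},u_{l+1}]$, directly contradicting hypothesis (i). Hypothesis (ii), forbidding a heavy triangle inside $[u_{-k-1},u_{l+1}]$, is held in reserve for configurations where two of the three chosen leaves happen to be adjacent, so that the claw-$o$-heavy argument fails and one must instead rule out a triangle of mutually heavy vertices.

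The main obstacle I expect is \emph{inducedness}: certifying that the three chosen leaves of each candidate claw are pairwise non-adjacent forces a careful case split based on how the anchor $u_{-k}$ (respectively $u_l$) entered the canonical sequence---by a 1-extension from its inner neighbour, or by a 2-extension paired with a vertex on the opposite boundary---since this governs which ``partner'' vertices are available and what edges among them are forbidden. A second delicate spot is the degenerate base $k=l=1$, where $G[u_{-k},u_l]$ is just the triangle on $\{u_{-1},u_0,u_1\}$ and no internal ``partner'' vertex is available; there I would argue directly from this triangle together with $u_{-k-1}$, $u_{l+1}$, using the non-adjacencies supplied by the failures of (1) and (2), together with the edge $u_{-1}u_1\in E(G)$ coming from the case analysis preceding the claim, to piece together the required induced claw or else to derive the desired edge $u_{-k-1}u_{l+1}$ directly and close the argument.
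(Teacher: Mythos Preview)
Your plan has a genuine gap: it tries to derive the contradiction using only the claw-$o$-heavy hypothesis and conditions (i)--(ii), working entirely inside $G[u_{-k-1},u_{l+1}]$. This cannot succeed, because under the assumed failures of (1), (2), (3) the graph $G[u_{-k-1},u_{l+1}]$ may well be claw-free. For a concrete obstruction, take $G[u_{-k},u_l]$ to be complete (which is perfectly compatible with being $(u_{-k},u_0,u_l)$-composed, since the carrier $D$ is only required to be a spanning subgraph). Then $u_{-k-1}$ is a pendant at $u_{-k}$, $u_{l+1}$ is a pendant at $u_l$, and one checks directly that $G[u_{-k-1},u_{l+1}]$ contains no induced $K_{1,3}$: any candidate centre lies in $[u_{-k},u_l]$ and at most one leaf can be outside the clique. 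So your proposed ``claw with leaves $u_{-k-1},u_{-k+1},w$'' will typically have $u_{-k+1}w\in E(G)$, and no reshuffling of leaves rescues this. Hypothesis (ii) does not help either: nothing forces any vertex of $[u_{-k-1},u_{l+1}]$ to be heavy in this configuration.

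What is missing is the $S$-$f$-heavy hypothesis with $S\in\{W,N\}$, together with the off-cycle vertices $w_1,w_2$. The paper's proof brings these in explicitly: it enlarges to $G''=G[[u_{-k-1},u_{l+1}]\cup\{w_1,w_2\}]$, first shows (using (i), (ii) and $S$-$f$-heaviness) that $G''$ is $\{K_{1,3},S\}$-free, and then runs a structural argument on the distance classes $N_i=\{x:d_{G'}(x,u_{-k-1})=i\}$ to manufacture an induced $W$ or $N$ in $G''$ involving $w_1$ (and sometimes $w_2$) --- contradicting what was just shown. Hypothesis (ii) enters precisely here: an induced $W$ or $N$ yields three heavy vertices in $G''$, and (ii) rules out the case where they span a triangle while (i) rules out the case where two of them are nonadjacent. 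Your proposal's use of (ii) as a fallback for ``adjacent leaves in a claw'' is not where it is actually needed.
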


\begin{proof}
Assume not. Then we have $u_{-k-1}u_s\notin E(G)$ for every vertex $s\in [-k+1,l]$, $u_su_{l+1}\notin E(G)$ for every vertex $s\in [-k,l-1]$, and $u_{-k-1}u_{l+1}\notin E(G)$.

\begin{subclaim}\label{su8.1}
For any vertex $u_s\in [u_{-k-1},u_{l+1}]\setminus \{u_0\}$ and $w\in \{w_1,w_2\}$, we have $u_sw\notin \widetilde{E}(G)$. Moreover, we have $u_0w_2\notin E(G)$ if $u_0v_0\notin E(G)$.
\end{subclaim}

\begin{proof}
Without loss of generality, suppose that $u_sw\in \widetilde{E}(G)$ and $s>0$. If $s=1$, then $u_1w\notin \widetilde{E}(G)$ by Claim \ref{cl1} or \ref{cl3}. Now assume that $s\in [2,{l+1}]$. Since $G[u_{-k},u_l]$ is $(u_{-k},u_0,u_l)$-composed, there exists an integer $t\in [-k,-1]$ such that $G[u_t,u_{s-1}]$ is $(u_t,u_0,u_{s-1})$-composed. By Lemma \ref{le4}, there is a $(u_0,u_t)$-path $P'$ such that $V(P')=[u_t,u_{s-1}]$. Let $C'=P'[u_0,u_t]\overleftarrow{C}[u_t,u_s]u_swP[w,u_0]$ (if $w\neq v_0$) or $C'=P'[u_0,u_t]\overleftarrow{C}[u_t,v_1]v_1\break v_{-1}\overleftarrow{C}[v_{-1},u_s]u_sv_0$$P[v_0,u_0]$ (if $w=v_0$). Clearly, $C'$ is an \emph{o}-cycle such that $V(C)\subseteq V(C')$ and $|V(C')|>|V(C)|$. By Lemma \ref{le1}, a contradiction.

Moreover if $r\geq 2$, then $u_0w_2\notin E(G)$ by the choice of $P$. If $r=1$ and $u_0v_0\notin E(G)$, then $u_0w_2=u_0v_0\notin E(G)$.
\end{proof}

Let $G'=G[[u_{-k-1},u_l]\cup \{w_1,w_2\}]$ and $G''=G[[u_{-k-1},u_{l+1}]\cup \{w_1,w_2\}]$.

\begin{subclaim}\label{su8.2}
If $S=W$ and $u_0v_0\notin E(G)$, then $G''$ and $G'$ are $\{K_{1,3},W\}$-free; If $S=N$, then $G''$ and $G'$ are $\{K_{1,3},N\}$-free.
\end{subclaim}

\begin{proof}
Note that $G'$ is an induced subgraph of $G''$. Hence we only need to prove that $G''$ satisfies the required property.

Assume that $G''$ contains an induced claw. Without loss of generality, let $H$ be the claw. If $V(H)\subseteq [u_{-k-1},u_{l+1}]$, then since $G$ is claw-\emph{o}-heavy, there is a heavy pair in $[u_{-k-1},u_{l+1}]$, which contradicts to condition ($i$) of Claim \ref{cl8}. If $w_1\in V(H)$ or $w_2\in V(H)$, then by Claim \ref{su8.1}, $d_H(w_1)\leq d_{G''}(w_1)=2$ and $d_H(w_2)\leq d_{G''}(w_2)\leq 2$. Hence $u_0$ is the center of $H$ and the other two end vertices $x_1,x_2$ of $H$ are in $[u_{-k-1},u_{l+1}]$. By Claim \ref{su8.1}, $w_1x_1,w_1x_2\notin \widetilde{E}(G)$. Since $G$ is claw-\emph{o}-heavy, $x_1,x_2$ is heavy pair in $G[u_{-k-1},u_{l+1}]$, a contradiction.

If $S=W$, then assume that $G''$ contains an induced subgraph $H\cong W$ depicted in Figure 1. Obviously, one vertex of $\{a_1,c_1\}$ and one vertex of $\{a_2,b_1\}$ and one vertex of $\{a_3,b_2\}$ are heavy. Hence there are at least three heavy vertices in $G''$. By Lemma \ref{le2} and the choice of $C$, $w_1$ is not heavy. Thus there is at least one heavy vertex in $[u_{-k-1},u_{l+1}]\backslash\{u_0\}$. By Claim 8.1, $w_2$ is not heavy. Thus there are at least three heavy vertices in $[u_{-k-1},u_{l+1}]$. If these three heavy vertices are adjacent to each other, then there is a heavy triangle in $[u_{-k-1},u_{l+1}]$, a contradiction. Otherwise, there are a heavy pair in $[u_{-k-1},u_{l+1}]$, a contradiction.

If $S=N$, then assume that $G''$ contains an induced subgraph $H\cong N$ depicted in Figure 1. Obviously, one vertex of $\{a_1,b_2\}$ and one vertex of $\{a_2,b_3\}$ and one vertex of $\{a_3,b_1\}$ are heavy. Hence there are at least three heavy vertices in $G''$. Similarly as the analysis above, we can deduce a contradiction.

The proof is complete.
\end{proof}

Now, we define $N_i=\{x\in V(G'): d_{G'}(x,u_{-k-1})=i\}$. Therefore, we have $N_0=\{u_{-k-1}\}$, and $N_1=\{u_{-k}\}$ by the fact that $u_{-k-1}u_s\notin E(G)$, where $s\in [-k+1,l]$. Without loss of generality, we assume $u_0\in N_j$, where $j\geq 2$. Then we have $w_1\in N_{j+1}$, $w_2\in N_{j+1}$ if $u_0w_2\in E(G)$, and $w_2\in N_{j+2}$ if $u_0w_2\notin E(G)$ by Claim \ref{su8.1}.

\begin{subclaim}\label{su8.3}
For $i\in [1,j]$, $N_i$ is a clique.
\end{subclaim}

\begin{proof}
Suppose that $|N_i|=1$ for some $i\in [2,j-1]$, and we set $N_i=\{x\}$. Then $x$ is a cut vertex of $G[u_{-k},u_l]$, contradicting the fact that $G[u_{-k},u_l]$ is 2-connected. Thus, we have $|N_i|\geq 2$ for every integer $i\in [2,j-1]$.

Now, we prove this claim by induction on $i$. If $i=1$, it is trivially true. If $i=2$, suppose that there exist $x,y\in N_2$ such that $xy\notin E(G)$, then $G[\{x,y,u_{-k},u_{-k-1}\}]\cong K_{1,3}$, a contradiction. Hence the claim is true when $i=2$. Now, we assume $3\leq i\leq j$, and we have each of $N_{i-3}$, $N_{i-2}$, $N_{i-1}$, $N_{i+1}$ is nonempty, and $|N_{i-1}|\geq 2$.

\noindent\textbf{Case A.}
$i<j$ or $i=j$ and $w_2u_0\notin E(G)$.

Note that $N_{i+2}$ is nonempty in this case. Let $x$ be a vertex of $N_i$ such that $y$ is a neighbor of it in $N_{i+1}$ which has a neighbor $z$ in $N_{i+2}$. For every vertex $x'\in N_i\backslash \{x\}$, we will show that $xx'\in E(G)$. Assume that $xx'\notin E(G)$. If $x'y\in E(G)$, then $G[\{x,x',y,z\}]\cong K_{1,3}$, a contradiction. If $x$ and $x'$ have a common neighbor in $N_{i-1}$, let it be $v$ and $w$ be a neighbor of $v$ in $N_{i-2}$. Then $G[\{x,x',v,w\}]\cong K_{1,3}$, a contradiction. Thus we assume $x$ and $x'$ have no common neighbors in $N_{i-1}$.

Let $v$ be a neighbor of $x$ in $N_{i-1}$ and $v'$ be a neighbor of $x'$ in $N_{i-1}$. By induction hypothesis, we have $vv'\in E(G)$. Let $w$ be a neighbor of $v$ in $N_{i-2}$ and $u$ be a neighbor of $w$ in $N_{i-3}$. If $wv'\notin E(G)$, then $G[\{x,v,v',w\}]\cong K_{1,3}$, a contradiction. Hence it follows that $v'w\in E(G)$. Now, we have $G[\{y,x,x',v,v',w\}]\cong W$ and  $G[\{x,x',v,v',w,u\}]\cong N$, a contradiction to Claim \ref{su8.2}. Therefore, for every vertex $x'\in N_i\setminus \{x\}$, $xx'\in E(G)$.

If there exist $x',x''\in N_i\setminus \{x\}$ such that $x'\neq x''$ and $x'x''\notin E(G)$, then we have $xx'\in E(G)$ and $xx''\in E(G)$. Note that $y$ is a neighbor of $x$ in $N_{i+1}$. If $x'y\in E(G)$ or $x''y\in E(G)$, then similar to the case of $x$ given above, we have $x'x''\in E(G)$, a contradiction. It follows that $x'y\notin E(G)$ and $x''y\notin E(G)$. Therefore, $G[\{x,x',x'',y\}]\cong K_{1,3}$, a contradiction.

\noindent\textbf{Case B.}
$i=j$ and $w_2u_0\in E(G)$.

We prove that for every vertex $x\in N_j\setminus\{u_0\}$, $xu_0\in E(G)$. Assume that $xu_0\notin E(G)$. If $x$ and $u_0$ have a common neighbor in $N_{j-1}$, let it be $v$ and $w$ be a neighbor of $v$ in $N_{j-2}$. Then $G[\{x,u_0,v,w\}]\cong K_{1,3}$, a contradiction. Thus we assume $x$ and $u_0$ have no common neighbors in $N_{j-1}$.

Let $v'$ be a neighbor of $u_0$ in $N_{j-1}$ and $v$ be a neighbor of $x$ in $N_{j-1}$. By induction hypothesis, we have $vv'\in E(G)$. Note that $u_0v\notin E(G)$ and $xv'\notin E(G)$. Let $w'$ be a neighbor of $v'$ in $N_{i-2}$ and $u'$ be a neighbor of $w'$ in $N_{j-3}$. If $w'v\notin E(G)$, then $G[\{u_0,v,v',w'\}]\cong K_{1,3}$, a contradiction. Hence it follows that $vw'\in E(G)$. Now we have $G[\{u_0,x,v,v',w',u'\}]\cong N$ and $G[\{u_0,x,v,v',w',w_1\}]\cong W$, contradicting Claim \ref{su8.2}. Therefore, for every vertex $x\in N_j\setminus \{u_0\}$, $xu_0\in E(G)$.

If there exist $x',x''\in N_j\setminus \{u_0\}$ such that $x'\neq x''$ and $x'x''\notin E(G)$. By the analysis above, we have $u_0x'\in E(G)$ and $u_0x''\in E(G)$. Note that $x',x''\neq w_2$. By Claim \ref{su8.1}, we have $w_1x'\notin E(G)$ and $w_1x''\notin E(G)$. Hence $G[\{w_1,u_0,x',x''\}]\cong K_{1,3}$, a contradiction.

The proof is complete.
\end{proof}

\begin{subclaim}\label{su8.4}
If $S=W$ and $u_0v_0\notin E(G)$, then $N_{G'}(u_0)\setminus \{w_1\}$ is a clique; If $S=N$, then $N_{G'}(u_0)\setminus \{w_1,w_2\}$ is a clique.
\end{subclaim}

\begin{proof}
Suppose not. If $S=W$ and $u_0v_0\notin E(G)$, then let $x,y\in N_{G'}(u_0)\setminus \{w_1\}$ are two vertices such that $xy\notin E(G)$. By Claim \ref{su8.1}, we have $x,y\neq w_2$, $w_1x\notin E(G)$ and $w_1y\notin E(G)$. Hence we have $G[\{x,y,u_0,w_1\}]\cong K_{1,3}$, a contradiction. If $S=N$, then suppose $x,y\in N_{G'}(u_0)\setminus \{w_1,w_2\}$ are two vertices such that $xy\notin E(G)$. Hence we have $G[\{x,y,u_0,w_1\}]\cong K_{1,3}$ by Claim \ref{su8.1}, a contradiction.
\end{proof}

\begin{subclaim}\label{su8.5}
$[u_{-k},u_l]\subseteq \bigcup_{i=1}^{j}N_i$.
\end{subclaim}

\begin{proof}
Assume there exists a vertex $x\in [u_{-k},u_l]$ such that $x\in N_{j+1}$. Let $y$ be a neighbor of $x$ in $N_j$, $z$ be a neighbor of $u_0$ in $N_{j-1}$ and $v$ be a neighbor of $z$ in $N_{j-2}$. Note that $x,z\notin\{w_1,w_2\}$. Then we have $xu_0\notin E(G)$, since otherwise $xz\in E(G)$ by Claim \ref{su8.4}, and this implies that $x\notin N_{j+1}$, a contradiction. By Claim \ref{su8.1}, we have $yw_1\notin E(G)$. Note that $yu_0\in E(G)$ by Claim \ref{su8.3}. If $yz\notin E(G)$, then $G[\{y,z,u_0,w_1\}]\cong K_{1,3}$, a contradiction. Now we assume $yz\in E(G)$. If $S=W$ and $u_0v_0\notin E(G)$, we have $u_0w_2\notin E(G)$ by Claim \ref{su8.1}. Hence $G[\{x,y,z,u_0,w_1,w_2\}]\cong W$, and it contradicts to Claim \ref{su8.2}. If $S=N$, then $G[\{x,y,z,u_0,w_1,v\}]\cong N$, which also contradicts to Claim \ref{su8.2}.

The proof is complete.
\end{proof}

It follows from Claim \ref{su8.5} that $u_l\in N_j$ or $u_l\in N_i$ where $i\in [2,j-1]$.

If $u_l\in N_j$, then let $x$ be a neighbor of $u_0$ in $N_{j-1}$ and $y$ be a neighbor of $x$ in $N_{j-2}$. Since $u_l,u_0\in N_j$, we have $u_lu_0\in E(G)$ by Claim \ref{su8.3}. By Claim \ref{su8.1}, we have $u_lw_1\notin E(G)$ and $xw_1\notin E(G)$. If $u_lx\notin E(G)$, then $G[\{w_1,u_0,u_l,x\}]\cong K_{1,3}$, a contradiction. Otherwise, we have $u_lx\in E(G)$. If $S=W$ and $u_0v_0\notin E(G)$, we have $u_0w_2\notin E(G)$ by Claim \ref{su8.1}. By Claim \ref{su8.1}, $G[\{x,y,u_0,u_l,w_1,w_2\}]\cong W$. If $S=N$, then by the fact that $u_su_{l+1}\notin E(G)$, where $s\in [-k,l-1]$ and Claim \ref{su8.1}, we have $G[\{x,y,u_0,u_l,u_{l+1},w_1\}]\cong N$. In each case, it contradicts to Claim \ref{su8.2}.

Now assume that $u_l\in N_i$, where $i\in [2,j-1]$ and $j\geq 3$. If $u_l$ has a neighbor in $N_{i+1}$, without loss of generality, let $x$ be a required vertex and $y$ be a neighbor of $u_l$ in $N_{i-1}$. Note that $i+1\leq j$, and this implies that $x\neq w_1,w_2$. By the fact that $u_su_{l+1}\notin E(G)$, where $s\in [-k,l-1]$, we have $G[\{u_l,u_{l+1},x,y\}]\cong K_{1,3}$, a contradiction. Then we assume $u_l$ has no neighbors in $N_{i+1}$.

Since $|N_i|\geq 2$, we can choose $x\in N_i$ be a vertex other than $u_l$ such that $y$ is a neighbor of $x$ in $N_{i+1}$ which has a neighbor $z$ in $N_{i+2}$. Let $u$ be a neighbor of $x$ in $N_{i-1}$ and $v$ be a neighbor of $u$ in $N_{i-2}$. Note that $u_lx\in E(G)$ by Claim \ref{su8.3}. If $u_lu\notin E(G)$, then $G[\{x,y,u_l,u\}]\cong K_{1,3}$, a contradiction. Thus we have $u_lu\in E(G)$. Hence we have $G[\{x,y,z,u_l,u,v\}]\cong W$ and $G[\{x,y,u_l,u_{l+1},u,v\}]\cong N$, contradicting Claim \ref{su8.2}.

The proof is complete.
\end{proof}

Now we choose $k$ and $l$ such that\\
(1) $G[u_{-k},u_l]$ is $(u_{-k},u_0,u_l)$-composed with the canonical ordering $u_{-k},u_{-k+1},\ldots,u_l$;\\
(2) there is not a heavy pair in $G[u_{-k},u_l]$;\\
(3) there is not a heavy triangle in $G[u_{-k},u_l]$; and\\
(4) $k+l$ is as large as possible.

By Claim \ref{cl8}, we know one of the following cases occurs:\\
($a$) there exists a vertex $u_{s'}\in [u_{-k+1},u_l]$ such that $u_{-k-1}u_{s'}\notin E(G)$ and $d(u_{-k-1})+d(u_{s'})\geq n$.\\
($b$) there exists a vertex $u_s\in [u_{-k+1},u_l]$ such that $u_{-k-1}u_s\in E(G)$ and each of $\{u_s,u_{-k-1}\}$ is heavy.\\
($c$) there exists a vertex $u_{t'}\in [u_{-k},u_{l-1}]$ such that $u_{l+1}u_{t'}\notin E(G)$ and $d(u_{l+1})+d(u_{t'})\geq n$.\\
($d$) there exists a vertex $u_t\in [u_{-k},u_{l-1}]$ such that $u_{l+1}u_t\in E(G)$ and each of $\{u_{l+1},u_t\}$ is heavy.\\
($e$) $u_{-k-1}u_{l+1}\notin E(G)$ and $d(u_{-k-1})+d(u_{l+1})\geq n$.\\
(\emph{f}) $u_{-k-1}u_{l+1}\in E(G)$ and each of $\{u_{-k-1},u_{l+1}\}$ is heavy.

Hence there exists a vertex $u_i\in [u_{-k+1},u_l]$ such that $d(u_{-k-1})+d(u_i)\geq n$, or there exists a vertex $u_j\in [u_{-k},u_{l-1}]$ such that $d(u_{l+1})+d(u_j)\geq n$, or $d(u_{-k-1})+d(u_{l+1})\geq n$.

\begin{claim}\label{cl9}
$(u_{-k-1},u_l)$ or $(u_{-k},u_{l+1})$ or $(u_{-k-1},u_{l+1})$ is $u_0$-good on $C$.
\end{claim}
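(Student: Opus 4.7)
The plan is to verify $u_0$-goodness directly in each of the six cases (a)--(f) identified just before the claim. Since $G[u_{-k},u_l]$ is $(u_{-k},u_0,u_l)$-composed, Lemma \ref{le4}(i) supplies a Hamilton $(u_0,u_{-k})$-path in $G[u_{-k},u_l]$, and by the symmetry of the composed definition (the canonical ordering may be reversed) also a Hamilton $(u_0,u_l)$-path. Lemma \ref{le4}(ii) supplies, for every $u_s\in[u_{-k+1},u_l]$, a spanning $(u_0u_l,u_su_{-k})$-pair in $G[u_{-k},u_l]$, and by the same symmetry, for every $u_t\in[u_{-k},u_{l-1}]$, a spanning $(u_0u_{-k},u_tu_l)$-pair. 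Because $u_{-k-1}u_{-k}$ and $u_lu_{l+1}$ are edges of $C$, each of these paths and pairs extends to reach $u_{-k-1}$ or $u_{l+1}$, yielding analogous spanning objects in $G[u_{-k-1},u_l]$, $G[u_{-k},u_{l+1}]$, and $G[u_{-k-1},u_{l+1}]$.

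In cases (a) and (b) the plan is to show that $(u_{-k-1},u_l)$ is $u_0$-good, witnessed by $i=1$ and $x'=u_{s'}$ (resp.\ $x'=u_s$). Condition (i) in the definition of $x$-good is the Hamilton $(u_0,u_l)$-path in $G[u_{-k},u_l]$ from the previous paragraph. Condition (ii) follows from the spanning $(u_0u_l,u_{s'}u_{-k})$-pair (resp.\ $(u_0u_l,u_su_{-k})$-pair) after prolonging the path ending at $u_{-k}$ by the edge $u_{-k}u_{-k-1}$. Condition (iii), namely $d(u_{-k-1})+d(x')\geq n$, is precisely the degree-sum hypothesis of the case (in (b) using that $u_{-k-1}$ and $u_s$ are both heavy). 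Cases (c) and (d) are handled symmetrically: $(u_{-k},u_{l+1})$ is $u_0$-good with $i=2$ and $x'=u_{t'}$ (resp.\ $x'=u_t$), the Hamilton $(u_0,u_{-k})$-path supplying condition (i), the extended spanning $(u_0u_{-k},u_{t'}u_{l+1})$-pair supplying condition (ii), and the degree-sum hypothesis supplying condition (iii).

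In cases (e) and (f) the plan is to show that $(u_{-k-1},u_{l+1})$ is $u_0$-good, witnessed by $i=1$ and $x'=u_{l+1}$. Condition (i) is satisfied by the Hamilton $(u_0,u_l)$-path of $G[u_{-k},u_l]$ prolonged by the edge $u_lu_{l+1}$ into a $(u_0,u_{l+1})$-path on $[u_{-k},u_{l+1}]$. For condition (ii), the required $(u_0u_{l+1},u_{l+1}u_{-k-1})$-pair with vertex set $[u_{-k-1},u_{l+1}]$ is realized by the trivial one-vertex path at $u_{l+1}$ together with the Hamilton $(u_0,u_{-k-1})$-path on $[u_{-k-1},u_l]$ obtained from the Hamilton $(u_0,u_{-k})$-path of $G[u_{-k},u_l]$ extended by $u_{-k}u_{-k-1}$; these two paths are vertex-disjoint and together span $[u_{-k-1},u_{l+1}]$. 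Condition (iii) is $d(u_{-k-1})+d(u_{l+1})\geq n$, which is the hypothesis of case (e) and follows from both vertices being heavy in case (f).

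The only subtlety, and the main obstacle I see, is whether the definition of an $(x_1x_2,y_1y_2)$-pair permits one of its two paths to consist of a single vertex (as needed with $x'=u_{l+1}$ in cases (e),(f)). The Section 2 definition asks only for two vertex-disjoint paths with origins in $\{x_1,x_2\}$ and termini in $\{y_1,y_2\}$, which is naturally read as allowing length-zero paths; this reading is also what makes Lemma \ref{le5} applicable to the triples produced here in the subsequent cycle-construction argument. With that convention in place, Claim 9 follows from the six sub-cases, each of which is a routine verification of the three defining conditions of $u_0$-good.
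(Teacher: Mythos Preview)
Your argument is correct and follows essentially the same route as the paper's own proof: the paper likewise collapses (a)/(b), (c)/(d), (e)/(f) into three degree-sum scenarios, invokes Lemma~\ref{le4} (using the same reversal symmetry of the composed structure to get both a Hamilton $(u_0,u_{-k})$- and $(u_0,u_l)$-path and the corresponding spanning pairs), and extends along the cycle edges $u_{-k}u_{-k-1}$, $u_lu_{l+1}$ to verify $u_0$-goodness. The subtlety you flag about a one-vertex path in the $(x_1x_2,y_1y_2)$-pair is resolved exactly as you suggest; the paper explicitly writes $D=P''u_{-k}u_{-k-1}\cup u_{l+1}$, treating $u_{l+1}$ as a trivial path.
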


\begin{proof}
If there exists a vertex $u_i\in [u_{-k+1},u_l]$ such that $d(u_{-k-1})+d(u_i)\geq n$, then since $G[u_{-k},u_l]$ is $(u_{-k},u_0,u_l)$-composed with the canonical ordering $u_{-k},u_{-k+1},\ldots,u_l$, there exists a $(u_0,u_l)$-path $P$ such that $V(P)=[u_{-k-1},u_l]\setminus\{u_{-k-1}\}$. Moreover, there is a $(u_0u_l,u_iu_{-k})$-pair $D$ such that $V(D)=[u_{-k},u_l]$, then $D'=D+u_{-k}u_{-k-1}$ is a $(u_0u_l,u_iu_{-k-1})$-pair $D'$ such that $V(D')=[u_{-k-1},u_l]$. Therefore $(u_{-k-1},u_l)$ is $u_0$-good on $C$.

If there exists a vertex $u_j\in [u_{-k},u_{l-1}]$ such that $d(u_{l+1})+d(u_j)\geq n$, then we can prove this claim similarly.

Now suppose $d(u_{-k-1})+d(u_{l+1})\geq n$. By Lemma \ref{le4}, there exists a $(u_0,u_l)$-path $P'$ such that $V(P')=[u_{-k},u_l]$ and a $(u_0,u_{-k})$-path $P''$ such that $V(P'')=[u_{-k},u_l]$. Then $P=P'u_lu_{l+1}$ is a $(u_0,u_{l+1})$-path such that $V(P)=[u_{-k},u_{l+1}]$, and $D=P''u_{-k}u_{-k-1}\cup u_{l+1}$ is a $(u_0u_{l+1},u_{l+1}u_{-k-1})$-pair such that $V(D)=[u_{-k-1},u_{l+1}]$. Thus $(u_{-k-1},u_{l+1})$ is $u_0$-good on $C$.

The proof is complete.
\end{proof}

\begin{claim}\label{cl10}
There exists a vertex $v_{-k'}\in V(\overrightarrow{C}[v_1,u_{-k-1}])$ and $v_{l'}\in V(\overleftarrow{C}[v_{-1},u_{l+1}])$ with $(v_{-k'},v_{l'})$ is $v_0$-good.
\end{claim}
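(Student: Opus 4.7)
The plan is to split into two cases based on whether $v_{-1}v_1\in E(G)$, and in each case to mirror the arguments already carried out at $u_0$.

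\emph{Case 1: $v_{-1}v_1\in E(G)$.} In this case $G[\{v_{-1},v_0,v_1\}]$ is a triangle and hence is $(v_{-1},v_0,v_1)$-composed. Starting from this triangle, I would run the obvious mirrors of Claims \ref{cl7}, \ref{cl8}, and \ref{cl9} at the vertex $v_0$ rather than at $u_0$, with $r_1$ and $r_2$ interchanged accordingly. In the original proof of Claim \ref{cl7}, Claim \ref{cl2} was used to guarantee $v_{-1}v_1\in\widetilde{E}(G)$ when closing up a contradicting $o$-cycle; in the mirror, one uses instead the stronger assumption $u_{-1}u_1\in E(G)\subseteq\widetilde{E}(G)$, which holds by the WLOG hypothesis. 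The mirror of Claim \ref{cl7} then bounds any $(v_{-k'},v_0,v_{l'})$-composed subgraph by $k'\leq r_1-2$ and $l'\leq r_2-2$. The mirror of Claim \ref{cl8} iteratively extends the composed structure subject to the no-heavy-pair and no-heavy-triangle conditions; its proof uses only the claw-$o$-heavy and $W$- or $N$-$f$-heavy assumptions, both of which are symmetric in $u_0$ and $v_0$. Finally, the mirror of Claim \ref{cl9} produces one of the three candidate pairs $(v_{-k'-1},v_{l'})$, $(v_{-k'},v_{l'+1})$, or $(v_{-k'-1},v_{l'+1})$ as a $v_0$-good pair on $C$, and the bounds just recalled guarantee its endpoints lie in the cycle-ranges stated.

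\emph{Case 2: $v_{-1}v_1\notin E(G)$.} By Claim \ref{cl1}, $w_rv_1, w_rv_{-1}\notin\widetilde{E}(G)$, so $G[\{w_r,v_{-1},v_0,v_1\}]\cong K_{1,3}$, and the claw-$o$-heavy assumption forces $d(v_{-1})+d(v_1)\geq n$. The plan here is to show that the pair $(v_1,v_{-1})$ itself --- corresponding to the choice $v_{-k'}=v_1$, $v_{l'}=v_{-1}$, which lie in the stated ranges because Claim \ref{cl7} gives $k\leq r_2-2$ and $l\leq r_1-2$ so that the endpoints $u_{-k-1}$ and $u_{l+1}$ do not collapse onto $v_1$ and $v_{-1}$ --- is already $v_0$-good on $C$. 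The $(v_1,v_{-1})$-path on $C$ containing $v_0$ is $P=v_1v_0v_{-1}$. Taking $i=1$ (so $x_1=v_1$, $x_2=v_{-1}$) and $x'=v_{-1}$, one checks that $P'=v_0v_{-1}$ is a $(v_0,v_{-1})$-path with $V(P')=V(P)\setminus\{v_1\}$; that the one-vertex path at $v_{-1}$ together with the cycle-edge $v_0v_1$ forms a $(v_0v_{-1},v_{-1}v_1)$-pair with vertex set $V(P)$; and that $d(v_1)+d(v_{-1})\geq n$ is exactly the required degree condition.

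The main obstacle is Case 1, where one must verify that every step in the proofs of Claims \ref{cl7}--\ref{cl9} transfers under the interchange of the $u$- and $v$-indices. Since the global hypotheses on $G$ are symmetric in $u_0$ and $v_0$, the only nontrivial point is that the "closing edge" at the opposite end of $C$ --- $v_{-1}v_1$ in the originals, $u_{-1}u_1$ in the mirrors --- is available in the $\widetilde{E}(G)$ sense; and indeed $u_{-1}u_1\in E(G)$ is at least as strong as what is needed. Case 2, though almost immediate, is essential precisely because it handles the sub-situation in which the composed-graph machinery at $v_0$ cannot even be initialized.
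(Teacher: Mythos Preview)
Your Case 2 is fine and matches the paper. The gap is in Case 1, specifically in the bounds you extract from the mirror of Claim~\ref{cl7}.

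A literal mirror of Claim~\ref{cl7} at $v_0$---using only $u_{-1}u_1\in E(G)$ as the ``closing edge''---indeed yields $k'\le r_1-2$ and $l'\le r_2-2$. But these bounds are \emph{not} strong enough to place the resulting $v_0$-good pair inside the arcs $\overleftarrow{C}[v_{-1},u_{l+1}]$ and $\overrightarrow{C}[v_1,u_{-k-1}]$ stated in the claim. With your bounds the endpoints of the good pair may reach as far as $v_{-(r_1-1)}=u_2$ and $v_{r_2-1}=u_{-2}$; once $l\ge 2$ or $k\ge 2$ these lie strictly inside the $u_0$-good range produced by Claim~\ref{cl9}, and then the ordering hypothesis $x_2,x,x_1,y_1,y,y_2$ of Lemma~\ref{le5} fails. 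So the sentence ``the bounds just recalled guarantee its endpoints lie in the cycle-ranges stated'' is not justified.

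What the paper actually does is set $r_1'=r_1-l$ and $r_2'=r_2-k$ and prove the \emph{sharper} Claim~10.1: $k'\le r_1'-1$ and $l'\le r_2'-1$. The point is that when the $v_0$-composed graph first reaches $v_{-r_1'}=u_{l+1}$ (say), one cannot close up the contradicting cycle using only the single edge $u_{-1}u_1$; instead one exploits the full $(u_{-k},u_0,u_l)$-composed structure already established at $u_0$. Concretely, Lemma~\ref{le4} gives a Hamilton $(u_0,u_{-k})$-path in $G[u_{-k},u_l]$, and splicing this with a Hamilton $(v_0,v_{y'})$-path in the $v_0$-composed graph, the path $P$, and the remaining cycle arc $\overrightarrow{C}[v_{y'},u_{-k}]$ produces a cycle through all of $V(C)\cup V(P)$. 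This is the missing idea: the mirror of Claim~\ref{cl7} must use the $u_0$-composed Hamilton path, not merely $u_{-1}u_1$, to obtain bounds tight enough for Lemma~\ref{le5}.
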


\begin{proof}
If $v_{-1}v_1\notin E(G)$, then $d(v_{-1})+d(v_1)\geq n$ by Claim \ref{cl2}. Moreover, $P=v_0v_{-1}$ is a $(v_0v_{-1})$-path and $D=v_0v_1\cup \{v_{-1}\}$ is a $(v_0v_{-1},v_{-1}v_1)$-pair. Hence $(v_{-1},v_1)$ is $v_0$-good.

If $v_{-1}v_1\in E(G)$, then $G[v_{-1},v_1]$ is $(v_{-1},v_0,v_1)$-composed.

Now, set ${r'_1}=r_1-l$ and ${r'_2}=r_2-k$, where $k\leq r_2-2$ and $l\leq r_1-2$ by Claim \ref{cl7}. Similar to Claims \ref{cl7} and \ref{cl8}, we have Claims \ref{su10.1} and \ref{su10.2} as follows.

\begin{subclaim}\label{su10.1}
If $G[v_{-k'},v_{l'}]$ is $(v_{-k'},v_0,v_{l'})$-composed with the canonical ordering $v_{-k'},v_{-k'+1},$\\$\ldots,v_{l'}$, then $k'\leq {r'_1}-1$ and $l'\leq {r'_2}-1$.
\end{subclaim}

\begin{subclaim}\label{su10.2}
If $G[v_{-k'},v_{l'}]$ is $(v_{-k'},v_0,v_{l'})$-composed with the canonical ordering $v_{-k'},v_{-k'+1},$\\$\ldots,v_{l'}$, where $k'\leq {r'_1}-1$ and $l'\leq {r'_2}-1$, and moreover the following two statements hold:\\
($i$) there is not a heavy pair in $G[v_{-k'-1},v_{l'+1}]$,\\
($ii$) there is not a heavy triangle in $G[v_{-k'-1},v_{l'+1}]$,\\
then one of the following is true:\\
(1) $G[v_{-k'-1},v_{l'}]$ is $(v_{-k'-1},v_0,v_{l'})$-composed with the canonical ordering $v_{-k'-1},v_{-k'},\ldots,v_{l'}$,\\
(2) $G[v_{-k'},v_{l'+1}]$ is $(v_{-k'},v_0,v_{l'+1})$-composed with the canonical ordering $v_{-k'},v_{-k'+1},\ldots,v_{l'+1}$,\\
(3) $G[v_{-k'-1},v_{l'+1}]$ is $(v_{-k'-1},v_0,v_{l'+1})$-composed with the canonical ordering $v_{-k'-1},v_{-k'},\ldots,v_{l'+1}$.
\end{subclaim}

Now we choose $k'$ and $l'$ such that\\
(1) $G[v_{-k'},v_{l'}]$ is $(v_{-k'},v_0,v_{l'})$-composed with the canonical ordering $v_{-k'},v_{-k'+1},\ldots,v_{l'}$;\\
(2) there is not a heavy pair in $G[v_{-k'},v_{l'}]$;\\
(3) there is not a heavy triangle in $G[v_{-k'},v_{l'}]$; and\\
(4) $k'+l'$ is as large as possible.

Similar to Claim \ref{cl9}, $(v_{-k'-1},v_{l'})$ or $(v_{-k'},v_{l'+1})$ or $(v_{-k'-1},v_{l'+1})$ is $v_0$-good on $C$.
\end{proof}

By Claims \ref{cl9} and \ref{cl10}, there exists a cycle which contains all the vertices in $V(C)\cup V(P)$ by Lemma \ref{le5}, a contradiction.

The proof is complete.
\hfill $\Box$

\section{Concluding Remarks}

It is known that Faudree and Gould \cite{Faudree_Gould} extended Bedrossian's result to 2-connected graphs on $n\geq 10$ vertices.

\begin{theorem}[Faudree and Gould \cite{Faudree_Gould}]\label{th15}
Let $R$ and $S$ be connected graphs other than $P_3$ and let $G$ be a 2-connected graph on $n\geq 10$ vertices. Then $G$ being $\{R,S\}$-free implies $G$ is Hamiltonian if and only if (up to symmetry) $R=K_{1,3}$ and $S=P_4,P_5,P_6,C_3,Z_1,Z_2,Z_3,B,$\\$N$ or $W$.
\end{theorem}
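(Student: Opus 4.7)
The theorem is a two-direction characterization that extends Bedrossian's Theorem~\ref{th1} by adding $Z_3$ to the allowed list of $S$ and imposing the order hypothesis $n\ge 10$. The \emph{sufficiency} half for $S\in\{P_4,P_5,P_6,C_3,Z_1,Z_2,B,N,W\}$ is immediate from Theorem~\ref{th1}, which carries no order hypothesis, so the only new sufficiency case is $S=Z_3$. The \emph{necessity} half requires, for every pair $\{R,S\}$ not in the list, an infinite family of 2-connected non-Hamiltonian $\{R,S\}$-free graphs of order $\ge 10$.

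For the $S=Z_3$ sufficiency I would argue by contradiction. Let $G$ be a 2-connected $\{K_{1,3},Z_3\}$-free graph on $n\ge 10$ vertices with a longest cycle $C$ that is not Hamiltonian, and let $P=u_0w_1\cdots w_rv_0$ be a shortest path from $C$ through $G-V(C)$ back to $C$, with $u_0,v_0\in V(C)$ and $r\ge 1$. The standard claw-free analysis (as in Claims~1--3 of Section~3, with $\widetilde{E}(G)$ replaced by ordinary adjacency, since being free of an induced claw is stronger than being claw-$o$-heavy) forces $u_{-1}u_1,\,v_{-1}v_1\in E(G)$, giving triangles $T_u=\{u_{-1},u_0,u_1\}$ and $T_v=\{v_{-1},v_0,v_1\}$, and forces the internal vertices of $P$ to be nonadjacent to $u_{\pm 1},v_{\pm 1}$. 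When $r\ge 3$, $T_u$ together with the induced $P_4$ $u_0w_1w_2w_3$ already realises an induced $Z_3$, contradicting $Z_3$-freeness. For $r\le 2$ the desired $Z_3$ is built by extending the detour along $C$: for $r=2$ attempt $T_u$ plus the $P_4$ $u_0w_1w_2v_0$, and for $r=1$ attempt $T_u$ plus the $P_4$ $u_0w_1v_0v_1$ (or its symmetric counterpart based at $T_v$). Blocking adjacencies on these candidate $Z_3$'s are either ruled out by the maximality of $C$ (they would yield a longer cycle through $V(C)\cup V(P)$) or converted into further induced claws at the blocking vertex; the hypothesis $n\ge 10$ enters by guaranteeing $|V(C)|\ge n-r\ge 8$, leaving enough room along $C$ to reroute the endpoints $u_0,v_0$ and produce the required induced $Z_3$.

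For necessity, nonlisted pairs split into two types: (a) if neither $R$ nor $S$ equals $K_{1,3}$, standard constructions from Bedrossian's original paper (triangle-free or bipartite-like 2-connected non-Hamiltonian families and their inflations) furnish counterexamples on $\ge 10$ vertices; (b) if (up to symmetry) $R=K_{1,3}$ and $S\notin\{P_4,P_5,P_6,C_3,Z_1,Z_2,Z_3,B,N,W\}$, then claw-free non-Hamiltonian constructions (line graphs of suitable non-Hamiltonian graphs, inflated cycles, etc.) provide $\{K_{1,3},S\}$-free 2-connected non-Hamiltonian graphs of arbitrarily large order. The \emph{main obstacle} is the $Z_3$ subcase of sufficiency when $r\in\{1,2\}$: extra adjacencies between $u_{\pm 1},v_{\pm 1}$ and either the detour or nearby arcs of $C$ can simultaneously destroy every naive candidate $Z_3$, and one must show that every such blocking configuration either produces a cycle longer than $C$ or pins down a small graph of order at most $9$. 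This is exactly why Bedrossian's list omits $Z_3$ for unrestricted $n$: specific small non-Hamiltonian $\{K_{1,3},Z_3\}$-free 2-connected graphs exist for $n<10$, and $n\ge 10$ is the precise threshold that rules them out and makes the Faudree--Gould extension valid.
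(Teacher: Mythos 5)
The paper does not prove this statement: Theorem~\ref{th15} is quoted from Faudree and Gould \cite{Faudree_Gould} and used as a black box in the concluding remarks, so there is no internal proof to compare yours against. Judged on its own, your proposal is an outline rather than a proof, and the gaps sit exactly where the real work lies.

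In the sufficiency argument for $S=Z_3$, the case $r\ge 3$ is fine (minimality of $r$ makes $P$ induced and isolates its interior from $C$, so $\{u_{-1},u_0,u_1,w_1,w_2,w_3\}$ induces a $Z_3$ once claw-freeness forces $u_{-1}u_1\in E(G)$), but the cases $r\in\{1,2\}$ --- which you yourself flag as the main obstacle --- are not actually handled. Asserting that every blocking adjacency ``either produces a longer cycle or yields another claw'' is the statement of what must be proved, not a proof; for instance, when $r=1$ and $u_0v_0\in E(G)$ both of your candidate $Z_3$'s die simultaneously and you offer no replacement. Moreover, the one quantitative claim you make, $|V(C)|\ge n-r\ge 8$, is false: $G-V(C)$ need not consist only of the interior of $P$, so a longest cycle can be far shorter than $n-r$. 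Since the entire role of the hypothesis $n\ge 10$ is to exclude specific small exceptional 2-connected non-Hamiltonian $\{K_{1,3},Z_3\}$-free graphs, an argument that never correctly locates where the order bound is used cannot be complete. The necessity half also needs real content beyond Theorem~\ref{th1}: one must show that any admissible $S$ is an induced subgraph of one of finitely many graphs and then exhibit, for every excluded pair, a 2-connected non-Hamiltonian $\{R,S\}$-free graph on at least $10$ vertices; ``standard constructions'' is a placeholder, not an argument. If you want to use this theorem, cite \cite{Faudree_Gould} as the paper does; if you want to prove it, the $r\le 2$ analysis for $Z_3$ and the classification step in the necessity direction both need to be written out in full.
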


Chen et al. \cite{Chen_Wei_Zhang_5} showed every 2-connected $\{K_{1,3},Z_3\}$-\emph{f}-heavy graph on $n\geq 10$ vertices is Hamiltonian.

\begin{theorem}[Chen, Wei and Zhang \cite{Chen_Wei_Zhang_5}]\label{th16}
Let $G$ be a 2-connected graph on $n\geq 10$ vertices. If $G$ is $\{K_{1,3},Z_3\}$-f-heavy, then $G$ is Hamiltonian.
\end{theorem}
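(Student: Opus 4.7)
The proof would follow the framework developed in the proof of Theorem 13. Assume for contradiction that $G$ is a 2-connected $\{K_{1,3},Z_3\}$-$f$-heavy graph on $n\geq 10$ vertices that is not Hamiltonian. Let $C$ be a longest cycle of $G$, and take $P=w_0w_1\cdots w_rw_{r+1}$ to be a shortest path internally-disjoint from $C$ joining two vertices $u_0,v_0\in V(C)$, using the same notation as in the proof of Theorem 13 (including the first non-neighbours $u_{j_1},v_{j_2}$ of $u_0$ and $v_0$ on $C$). Because $\{K_{1,3},Z_3\}$-$f$-heaviness implies claw-$o$-heaviness, Claims 1--6 of the proof of Theorem 13 and Lemma 2 (giving that $C$ is heavy, hence $w_1,w_r$ are light) all transfer unchanged.

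The proof would split on whether $u_0v_0\in E(G)$. When $u_0v_0\in E(G)$, the claw argument of Case 2.1 in the proof of Theorem 13 uses only claw-$o$-heaviness, Lemma 2 and Claim 5, so it again forces $r=1$; thus $\{u_0,v_0,w_1\}$ is a triangle with $w_1$ light. The central task is to exhibit an induced copy of $Z_3$. I would consider $R=G[\{u_0,v_0,w_1,u_{j_1-1},u_{j_1},u_{j_1+1}\}]$; Claims 3 and 4 give almost all of the required non-edges, and the outstanding ones ($u_0u_{j_1+1}$, $v_0u_{j_1+1}$, $w_1u_{j_1+1}$ and $u_{j_1-1}u_{j_1+1}$ all non-edges) either hold or provide a chord that I would use to reroute $C$ through $P$ and produce a longer cycle, contradicting maximality of $C$. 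Once $R\cong Z_3$, the distance-$2$ pair $(w_1,u_{j_1-1})$ (the path $w_1u_0u_{j_1-1}$ in $R$) together with $Z_3$-$f$-heaviness forces $u_{j_1-1}$ to be heavy (as $w_1$ is light); symmetrically, from $R'=G[\{u_0,v_0,w_1,v_{j_2-1},v_{j_2},v_{j_2+1}\}]$ we get $v_{j_2-1}$ heavy. Then $d(u_{j_1-1})+d(v_{j_2-1})\geq n$, contradicting Claim 4(iii).

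When $u_0v_0\notin E(G)$, Claim 6 supplies, without loss of generality, $u_{-1}u_1\in E(G)$, giving a triangle $\{u_{-1},u_0,u_1\}$. I would hunt for an induced $Z_3$ either inside $G[\{w_1,w_2,w_3,u_{-1},u_0,u_1\}]$ (when $r$ is large enough to supply the $P_4$-tail) or in a mixed configuration mixing $w$-vertices with deep cycle vertices; applying $Z_3$-$f$-heaviness to a distance-$2$ pair containing the light vertex $w_1$ produces heaviness of a vertex in $\{u_{-1},u_1\}$, and reproducing the same analysis on the $v$-side yields a heavy pair in $A^+$ or $A^-$ that contradicts Lemma 3.

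The main obstacle is the adjacency bookkeeping in identifying induced $Z_3$ subgraphs: because the $P_4$-tail of $Z_3$ needs one more cycle vertex than the $P_3$-tail of $Z_2$ did, one must rule out additional chord configurations of $u_0$ (and $v_0$) to the cycle. I expect the hardest sub-case is when $u_0$ is adjacent to many consecutive cycle vertices near $u_{j_1-1}$; there I would invoke the machinery of composed graphs, canonical orderings and $x$-good pairs from the proof of Theorem 13, together with Lemmas 4 and 5, iterating the ``no heavy pair, no heavy triangle'' maximality choice on a composed substructure around $u_0$ and $v_0$, exactly as in Case 3 (the $W$ and $N$ case) of the Theorem 13 proof. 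The hypothesis $n\geq 10$ enters precisely to guarantee the extra cycle vertex needed for the $P_4$-tail of $Z_3$; for $n<10$ the small sporadic graphs identified in the Faudree--Gould refinement of Bedrossian's theorem would obstruct this construction, which is why the bound cannot be dropped.
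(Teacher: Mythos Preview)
The paper does not prove Theorem~\ref{th16}; it is quoted as a known result of Chen, Wei and Zhang \cite{Chen_Wei_Zhang_5} in the concluding remarks, so there is no proof in the paper to compare your proposal against.

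That said, your outline has a structural problem worth flagging. Your argument uses only the claw-\emph{o}-heavy consequence of claw-\emph{f}-heaviness and otherwise runs the Theorem~\ref{th13} framework. If that worked, it would actually establish the stronger statement ``claw-\emph{o}-heavy and $Z_3$-\emph{f}-heavy on $n\ge 10$ vertices implies Hamiltonian'', which the paper explicitly leaves open as its final Problem. So either you have solved that problem or there is a gap; the latter is far more likely, and it is visible in your ``hardest sub-case''. In Case~3 of the proof of Theorem~\ref{th13}, the engine is Claim~8.2: an induced $W$ or $N$ inside $G''$ must contain at least three heavy vertices (one from each of three disjoint distance-$2$ pairs), and three heavy vertices in $[u_{-k-1},u_{l+1}]$ force a heavy pair or a heavy triangle, contradicting the maximality conditions~(2)--(3). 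For $Z_3$ this fails: labelling the triangle $\{a,b,c\}$ with tail $c\,v_1v_2v_3$, the distance-$2$ pairs are $(a,v_1),(b,v_1),(c,v_2),(v_1,v_3)$, and all four are satisfied once $v_1$ is heavy together with one of its neighbours $c$ or $v_2$. Two adjacent heavy vertices give neither a heavy pair nor a heavy triangle, so the analogue of Claim~8.2 does not go through, and your appeal to ``the machinery of composed graphs \ldots\ exactly as in Case~3'' is not justified. The original Chen--Wei--Zhang proof presumably exploits the full strength of claw-\emph{f}-heaviness (not merely claw-\emph{o}-heaviness) at some point; your sketch does not indicate where or how.
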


Together with Theorems \ref{th12}, \ref{th16} and Remark \ref{re1}, we have the following result which extends Theorem \ref{th15}.

\begin{theorem}
Let $R$ and $S$ be connected graphs other than $P_3$ and let $G$ be a 2-connected graph on $n\geq 10$ vertices. Then $G$ being $\{R,S\}$-f-heavy implies $G$ is Hamiltonian if and only if (up to symmetry) $R=K_{1,3}$ and $S=P_4,P_5,P_6,Z_1,Z_2,Z_3,B,N$ or $W$.
\end{theorem}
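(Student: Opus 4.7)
The plan is to prove both directions by reducing them to results already gathered earlier in the paper, in exactly the same spirit as the proof of Theorem \ref{th12}.

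For the necessity direction, I would proceed as follows. Since every $H$-free graph is trivially $H$-f-heavy, the implication ``$\{R,S\}$-f-heavy $\Rightarrow$ Hamiltonian'' is strictly stronger than ``$\{R,S\}$-free $\Rightarrow$ Hamiltonian''. Therefore if the former holds for every 2-connected graph on $n \geq 10$ vertices, so does the latter, and Theorem \ref{th15} (Faudree-Gould) forces $(R,S)$, up to symmetry, to be $K_{1,3}$ paired with an element of $\{P_4, P_5, P_6, C_3, Z_1, Z_2, Z_3, B, N, W\}$. Finally, I would invoke Remark \ref{re1} to rule out $S = C_3$: since $C_3$ is a clique, every graph is vacuously $C_3$-f-heavy, and because non-Hamiltonian 2-connected claw-free graphs exist (cf.\ \cite{Matthews_Sumner}), $\{K_{1,3}, C_3\}$-f-heaviness cannot imply Hamiltonicity.

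For the sufficiency direction, I would dispatch each surviving choice of $S$ by appealing to an existing theorem. The cases $S = P_6, Z_1, B, N, Z_2, W$ are precisely the contents of Theorems \ref{th6}, \ref{th7}, \ref{th8}, \ref{th9}, \ref{th10}, \ref{th11}, respectively, and the case $S = Z_3$, which is the only one that genuinely needs the hypothesis $n \geq 10$, is Theorem \ref{th16}. The two remaining cases $S = P_4$ and $S = P_5$ are handled using the containment observation recorded in the paragraph just before Theorem \ref{th12}: for any induced $P_6 = v_1 v_2 v_3 v_4 v_5 v_6$ in $G$, the three subpaths $v_1v_2v_3v_4$, $v_2v_3v_4v_5$ and $v_3v_4v_5v_6$ are induced $P_4$'s of $G$ whose distance-two pairs collectively cover all distance-two pairs of the $P_6$; hence $P_4$-f-heaviness implies $P_6$-f-heaviness, and an identical argument with the two induced $P_5$ subpaths yields the same conclusion for $P_5$. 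Consequently $\{K_{1,3}, P_4\}$-f-heavy and $\{K_{1,3}, P_5\}$-f-heavy both reduce to $\{K_{1,3}, P_6\}$-f-heavy, and Theorem \ref{th6} closes these cases.

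The main obstacle is not in the assembly above, which is little more than a bookkeeping exercise, but in the deep inputs that it consumes. The substantive difficulties are packaged in the proofs of Theorems \ref{th10} and \ref{th11}, which are both subsumed by Theorem \ref{th13} and require the full o-cycle / composed-graph machinery of Section 3 together with Lemmas \ref{le1}--\ref{le5}, and in the proof of Theorem \ref{th16} imported from \cite{Chen_Wei_Zhang_5}. Once those ingredients are granted, the characterization stated here is immediate.
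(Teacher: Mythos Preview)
Your proposal is correct and follows essentially the same route as the paper, which derives the result ``together with Theorems~\ref{th12}, \ref{th16} and Remark~\ref{re1}'' immediately after stating Theorem~\ref{th15}. The only cosmetic difference is that you unpack Theorem~\ref{th12} into its constituent inputs (Theorems~\ref{th6}--\ref{th11} plus the $P_4/P_5\Rightarrow P_6$ observation), whereas the paper invokes Theorem~\ref{th12} as a single package; for necessity you, like the paper, lean on Theorem~\ref{th15} and Remark~\ref{re1}. One small remark worth making explicit in your write-up: when you invoke Remark~\ref{re1} to rule out $C_3$, note that the Matthews--Sumner family in \cite{Matthews_Sumner} furnishes claw-free non-Hamiltonian examples with arbitrarily many vertices, so the restriction $n\geq 10$ poses no obstacle.
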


Li et al. \cite{Li_Ryjacek_Wang_Zhang} also constructed a class of 2-connected graphs on $n\geq 10$ vertices which are $\{K_{1,3},Z_3\}$-\emph{o}-heavy but not Hamiltonian. Thus it is natural to pose the following problem.

\begin{problem}
Is every 2-connected claw-\emph{o}-heavy and $Z_3$-\emph{f}-heavy graph on $n\geq 10$ vertices Hamiltonian?
\end{problem}

\section*{Acknowledgement}
The authors are grateful to two anonymous referees for helpful comments on an earlier version of this article.

\end{document}